\newcommand{\ignore}[1]{}
\newtheorem{theorem}{Theorem}[section]
\newtheorem{lemma}[theorem]{Lemma}
\newcommand{\Proof}[1]
        {
        \noindent
        \emph{Proof #1.}~
        }
\newsavebox{\smallProofsym}                     % smallproofsym.tex
\newcommand{\smalleop}[1]
        {
        \mbox{} \hfill #1~~\usebox{\smallProofsym}\!\!\!\!\!\!\
        }
\newcommand{\parag}[1]{\vspace{2mm}

\noindent{\bf #1} }
\newcommand{\ZZ}{\ensuremath{\mathbb Z}}
\newcommand{\RR}{\ensuremath{\mathbb R}}
\newcommand{\CC}{\ensuremath{\mathbb C}}
\newcommand{\pts}{\mathcal P}
\newcommand{\vars}{\mathcal V}
\newcommand{\curves}{\mathcal C}
\newcommand{\flats}{\mathcal L}
\newcommand{\lines}{\mathcal L}
\newcommand{\hyperp}{\mathcal H}
\newcommand{\rank}{\mathrm{rank\,}}
\newcommand{\lattice}{\mathcal L}
\newcommand{\vb}{{\bf V}}
\newcommand{\ib}{{\bf I}}
\newcommand{\jb}{{\bf J}}
\def\eps{{\varepsilon}}
\begin{document}
\pagenumbering{arabic}

\title{A General Incidence Bound in $\RR^d$ and Related Problems}

\author{
Thao Do\thanks{Department of Mathematics, Massachusetts Institute of Technology, Cambridge, MA, USA.
{\sl thaodo@mit.edu}.}
\and
Adam Sheffer\thanks{Department of Mathematics, Baruch College, City University of New York, NY, USA.
{\sl adamsh@gmail.com}. Supported by NSF grant DMS-1710305.}}

\maketitle

\begin{abstract}
We derive a general upper bound for the number of incidences with $k$-dimensional varieties in $\RR^d$.
The leading term of this new bound generalizes previous bounds for the special cases of $k=1, k=d-1,$ and $k= d/2$, to every $1\le k <d$.
We derive lower bounds showing that this leading term is tight in various cases.
We derive a bound for incidences with transverse varieties, generalizing a result of Solymosi and Tao.
Finally, we derive a bound for incidences with hyperplanes in $\CC^d$, which is also tight in some cases.
(In both $\RR^d$ and $\CC^d$, the bounds are tight up to sub-polynomial factors.)

To prove our incidence bounds, we define the \emph{dimension ratio} of an incidence problem.
This ratio provides an intuitive approach for deriving incidence bounds and isolating the main difficulties in each proof.
We rely on the dimension ratio both in $\RR^d$ and in $\CC^d$, and also in some of our lower bounds.

\end{abstract}

\section{Introduction}

Geometric incidence is an important topic in Discrete Geometry.
Given a set $\pts$ of points and a set $\vars$ of geometric objects (such as circles or hyperplanes) in  $\RR^d$, an \emph{incidence} is a pair $(p,V)\in \pts \times \vars$ such that the point $p$ is contained in the object $V$.
The number of incidences in $\pts\times\vars$ is denoted as $I(\pts,\vars)$.
In incidence problems, one is usually interested in the maximum number of incidences in $\pts \times \vars$, taken over all possible sets $\pts,\vars$ of given sizes.
Such incidence bounds have many applications in a variety of fields.
For a few recent examples, see Guth and Katz's solution to the Erd\H os distinct distances problem \cite{GK15}, a number theoretic result by Bombieri and Bourgain \cite{Bb15}, and works in Harmonic Analysis such as \cite{BD15,KZ17}.

When studying an incidence problem between a point set $\pts$ and a set of objects $\vars$, we sometimes consider the \emph{incidence graph} of $\pts \times \vars$.
This bipartite graph has vertex sets $\pts$ and $\vars$, and an edge for every incidence.
Deriving an upper bound for the number of incidences is equivalent to finding an upper bound for the number of edges in the incidence graph.
When studying incidence problems in dimension $d\ge 3$, one usually assumes that the incidence graph contains no copy of $K_{s,t}$ for some constants $s,t\ge 2$.
Such incidence problems can also be thought of as algebraic or geometric variants of the Zarankiewicz problem (for example, see \cite{FPSSZ17}).

In this paper we study incidences with varieties of any dimension in $\RR^d$, when the incidence graph contains no copy of  $K_{s,t}$ for some constants $s,t\ge 2$.
The following theorem describes the main known results that hold for every $d\ge 2$.
We use the notation $f=O_{a_1,\dots, a_k}(g)$ to indicate there is some positive constant $c$ that depends on $a_1\dots, a_k$, such that $f\leq c\cdot g$.
For a point set $\pts$ and a set $\vars$ of varieties, both in $\RR^d$, we denote by $I^*(\pts,\vars)$ the number of incidences $(p,h)\in \pts\times \vars$ where $p$ is a regular point of $h$.

\begin{theorem} \label{th:UpperBounds}
Let $\pts$ be a set of $m$ points and let $\vars$ be a set of $n$ varieties of degree at most $D$, both in $\RR^d$, such that the incidence graph of $\pts\times \vars$ contains no copy of $K_{s,t}$.

(a) (Solymosi and Tao \cite{ST12}) Assume that every variety of $\vars$ is of dimension at most $d/2$ and that the varieties intersect \emph{transversely} (that is, whenever two varieties intersect in a point $p$, their tangent spaces at $p$ intersect in a single point).
Then for every $\eps>0$ we have \\
\[I^*(\pts,\vars) = O_{D,s,t,d,\eps}\left(m^{\frac{s}{2s-1}+\eps}n^{\frac{2s-2}{2s-1}}+m+n \right). \]

(b) (Fox, Pach, Sheffer, Suk, and Zahl \cite{FPSSZ17}) For any $\eps>0$ we have
\[I(\pts,\vars) = O_{D,s,t,d,\eps}\left(m^{\frac{(d-1)s}{ds-1}+\eps}n^{\frac{d(s-1)}{ds-1}}+m+n \right). \]

(c) (Sharir, Sheffer, and Solomon \cite{SSS16}) When every variety of $\vars$ is of dimension at most one, for every $\eps>0$ there exists a constant $c$ that satisfies the following.
For $j=2,\ldots,d-1$, assume that every $j$-dimensional variety of degree at most $D$ contains at most $q_j$ varieties of $\vars$, for parameters $q_2 \le \cdots \le q_{d-1}\le q_d=n$.
Moreover, for every $2\le j<l \le d$, we have $q_j \ge \left(\frac {q_{l-1}} {q_l}\right)^{l(l-2)}q_{l-1}$.
Then for any $\eps>0$ we have
\begin{align*}
I(\pts,\vars) = O_{D,s,t,d,\eps}\Bigg(m^{\frac{s}{ds-d+1}+\eps}n^{\frac{ds-d}{ds-d+1}}+ \sum_{j=2}^{d-1}
m^{\frac{s}{js-j+1}+\eps}n^{\frac{d(j-1)(s-1)}{(d-1)(js-j+1)}}&
q_j^{\frac{(d-j)(s-1)}{(d-1)(js-j+1)}} \\
&\hspace{15mm}+m+n\Bigg).
\end{align*}
\end{theorem}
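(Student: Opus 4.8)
The plan is to prove all three bounds by \emph{polynomial partitioning} combined with induction on the ambient dimension $d$, which is the common template behind \cite{ST12,FPSSZ17,SSS16}. The base case $d=2$ is the planar Szemer\'edi--Trotter bound and its $K_{s,t}$-free extension to bounded-degree curves, which follows from the K\H{o}v\'ari--S\'os--Tur\'an inequality $I(\pts,\vars)=O_{s,t}\!\left(m n^{1-1/s}+n\right)$ together with the crossing-number method; note this single base case serves all three parts since their exponents all collapse to $m^{\frac{s}{2s-1}}n^{\frac{2s-2}{2s-1}}$ at $d=2$. For $d\ge 3$ and a parameter $r$ to be fixed later, take a partitioning polynomial $f$ of degree $O(r)$ whose zero set $Z(f)$ splits $\RR^d\setminus Z(f)$ into $O(r^d)$ open cells, each meeting at most $O(m/r^d)$ points of $\pts$; then split $I(\pts,\vars)$ into incidences occurring inside the cells and incidences with points lying on $Z(f)$.

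For the cell part, the geometric input I would use is that a variety $V\in\vars$ of dimension $k$ and degree at most $D$ not contained in $Z(f)$ meets only $O_{D,d}(r^{k})$ cells, so that $\sum_{\tau}\#\{V\in\vars:\ V\cap\tau\neq\emptyset\}=O_{D,d}(n r^{k})$, the sum being over cells $\tau$. Applying the target bound recursively inside each cell, where $m_\tau=O(m/r^d)$ and $\sum_\tau n_\tau=O_{D,d}(n r^k)$, and using H\"older over the cells, gives a self-improving inequality for $I(\pts,\vars)$; choosing $r$ to balance this against the trivial $O_{s,t}(m n^{1-1/s}+n)$ bound attained inside a single cell yields precisely the leading exponents $\tfrac{s}{2s-1}$, $\tfrac{(d-1)s}{ds-1}$, $\tfrac{s}{ds-d+1}$ of parts (a), (b), (c). At the maximal relevant dimension ($k=d/2$ in (a), $k=d-1$ in (b)) the recursion is borderline rather than strictly contracting, and the $r^\eps$ factor absorbs this: one takes $r$ a small power of $m$ so the recursion terminates after $O(1/\eps)$ rounds, each costing only a constant factor. (This balancing is exactly what the later sections organize through the notion of the dimension ratio.)

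The hard part, in every case, is bounding the incidences with points lying on the partition hypersurface $Z(f)$, which is $(d-1)$-dimensional. A variety not contained in $Z(f)$ meets it in a variety of dimension $\le k-1$ and bounded degree, so those incidences drop into a $(d-1)$-dimensional sub-problem and cause no trouble; the difficulty is with varieties \emph{contained} in $Z(f)$, where there is no dimension drop, and it is here that the three parts diverge. In (a) one restricts $\vars$ to $Z(f)$: a variety of dimension $\le d/2$ restricts to dimension $\le d/2-1\le (d-1)/2$, transversality is inherited by the restricted varieties (after a generic choice of coordinates so the singular locus of $Z(f)$ is negligible), and the induction hypothesis applies on $Z(f)$; here restricting to regular points $I^*$ is precisely what keeps each point on a bounded number of local branches, so that $K_{s,t}$-freeness survives the restriction. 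In (b) one iterates the partitioning \emph{within} $Z(f)$, bounding via B\'ezout the varieties that persist on each successive hypersurface, and this iteration is what produces the exponent $\tfrac{(d-1)s}{ds-1}$. In (c) the curves that lie on an intermediate $j$-dimensional variety inside $Z(f)$ number at most $q_j$ by hypothesis, the conditions $q_j\ge\left(q_{l-1}/q_l\right)^{l(l-2)}q_{l-1}$ are exactly what make the level-by-level optimization mutually consistent, and carrying these contributions through the recursion produces the extra sum $\sum_{j=2}^{d-1} m^{\frac{s}{js-j+1}+\eps}n^{\frac{d(j-1)(s-1)}{(d-1)(js-j+1)}}q_j^{\frac{(d-j)(s-1)}{(d-1)(js-j+1)}}$. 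In all three cases, combining the cell bound with the $Z(f)$ bound and re-optimizing over $r$ closes the induction.
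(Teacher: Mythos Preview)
The paper does not prove Theorem~\ref{th:UpperBounds}; it is stated in the introduction as background, with each part attributed to the cited reference. The paper's own contributions are Theorems~\ref{th:IncReal} and~\ref{th:RdTrans}, which generalize parts (b)--(c) and (a) respectively, and those proofs do follow the polynomial-partitioning-plus-induction template you describe. So at the level of overall strategy your plan is consonant with both the cited works and the paper's new proofs.

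That said, your sketch for part (a) has a genuine gap. You write that on $Z(f)$, ``a variety of dimension $\le d/2$ restricts to dimension $\le d/2-1\le (d-1)/2$, transversality is inherited \ldots\ and the induction hypothesis applies on $Z(f)$.'' For a variety \emph{contained} in $Z(f)$ there is no dimension drop: its restriction is itself, of dimension up to $d/2$, and $d/2>(d-1)/2$, so the induction hypothesis on the $(d-1)$-dimensional $Z(f)$ does not apply directly. The actual mechanism --- both in Solymosi--Tao and in the paper's proof of Theorem~\ref{th:RdTrans} --- is a tangent-space argument exploiting transversality: if $U$ is an irreducible $d'$-dimensional component of $Z(f)$, $p$ is a regular point of $U$, and two varieties $h,h'\in\vars$ have intersections $h_U,h'_U$ with $U$ of dimension larger than $d'/2$ for which $p$ is also regular, then $T_p h_U$ and $T_p h'_U$ are subspaces of $T_p U$ of dimension $>d'/2$ whose intersection is trivial (since $T_p h\cap T_p h'$ is trivial by transversality), which forces $\dim T_p U>d'$, a contradiction. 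Hence at each regular point of $U$ at most one variety of $\vars$ can contribute a high-dimensional piece, giving $O(\overline m)$ such incidences; the remainder are pushed to the singular locus of $U$ or of the intersections, which are lower-dimensional by Theorem~\ref{th:Singular}, and one iterates. Without this argument, the induction on the partition in (a) does not close; ``transversality is inherited'' is not the point, and the restriction to $I^*$ is used precisely to make the tangent-space step legitimate, not merely to control branch multiplicities.
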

When $2\le d \le 4$, additional results are known. For example, see \cite{BS16,SZ17,Zahl17}.

The bound of part (a) of Theorem \ref{th:UpperBounds} is considered ``good'' only for varieties of dimension exactly $d/2$.
For the exact meaning of good, see the discussion below.
For now we only state that this bound is known to be tight up to sub-polynomial factors in some cases, but only when the varieties are of dimension exactly $d/2$.
The bound of part (b) is considered ``good'' when the varieties are of dimension $d-1$ (and is tight up to sub-polynomial factors in some cases for varieties of dimension $d-1$).
The bound of part (c) holds only when the varieties are of dimension one.

All of the bounds in Theorem \ref{th:UpperBounds} are obtained using the \emph{polynomial partitioning} technique (see section \ref{sec:prelim}).
When using this technique for $k$-dimensional varieties in $\RR^d$, one expects the main term in the incidence bound to be\footnote{To intuitively see how these exponents are obtained, write $T_{k,d}(m,n)=m^\alpha n^\beta$.
For a constant $r$, we use polynomial partitioning to divide the space into $O(r^d)$ cells, each containing at most $\frac{m}{r^d}$ points and intersecting about $\frac{n}{r^{d-k}}$ varieties on average.
We inductively apply the incidence bound separately in each cell, intuitively leading to the relation $r^d \cdot T_{k,d}(\frac{m}{r^d},\frac{n}{r^{d-k}})\approx T_{k,d}(m,n)$.
For the powers of $r$ to cancel out, we require $d\alpha+(d-k)\beta=d$.
Additionally, in the proof we may assume that $n=O(m^s)$, and we require that $n = O(m^\alpha n^\beta)$.
That is, we intuitively ask that $T_{k,d}(m,m^s) \approx m^s$, or $\alpha+s\beta=s$.
Solving the two equations yields the asserted exponents.}
$$T_{k,d}(m,n):=m^{\frac{sk}{ds-d+k}}n^{\frac{ds-d}{ds-d+k}}.$$
Note that this is indeed the main term in all three parts of Theorem \ref{th:UpperBounds}, up to sub-polynomial factors.
We believe that this bound is tight when $s=2$ (up to sub-polynomial factors).
Moreover, if stronger bounds exist, deriving these is likely to require significantly different techniques.
For the case of curves in $\RR^2$ and $\RR^3$ with $s>2$, one can obtain stronger bounds using the technique of cutting curves into pseudo-segments (see for example \cite{SZ17,Zahl17}).

\parag{Our main result.} Our main result is a general incidence bound for $k$-dimensional varieties in $\RR^d$ with no $K_{s,t}$ in the incidence graph.
Our bound has the main term $m^{\frac{sk}{ds-d+k}+\eps}n^{\frac{ds-d}{ds-d+k}}$ where $\eps$ is an arbitrarily small positive number.

The intuition behind our proofs is based on the concept of \emph{dimension ratio}.
When studying incidences between points and $k$-dimensional varieties in $\RR^d$, we define the dimension ratio of the problem as $\frac{k}{d}$.
As shown in the following lemma, the smaller the dimension ratio is, the smaller the expected leading term is.
For a proof of this lemma, see Section \ref{sec:Rd}.

\begin{lemma}[Dimension ratio lemma]\label{le:ratio test}
Consider positive integers $n,m,s,k,d,k',d'$, such that $n=O(m^s)$, $s>1$, and $\frac{k'}{d'}\le \frac{k}{d} < 1$.
Then $T_{k',d'}(m,n)=O(T_{k,d}(m,n))$.
\end{lemma}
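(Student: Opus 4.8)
The plan is to reduce the lemma to two elementary observations about the exponents appearing in $T_{k,d}(m,n)=m^{\alpha(k,d)}n^{\beta(k,d)}$, where $\alpha(k,d)=\frac{sk}{ds-d+k}$ and $\beta(k,d)=\frac{d(s-1)}{ds-d+k}$. First I would record that these exponents are well defined and positive, since $ds-d+k=d(s-1)+k>0$ when $s>1$ and $d,k\ge 1$. Next I would verify the identity $\alpha(k,d)+s\,\beta(k,d)=s$ by a one-line computation — it is precisely the normalization $T_{k,d}(m,m^s)\approx m^s$ noted in the footnote above — and, equivalently, $s-s\,\beta(k,d)=\alpha(k,d)$. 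Finally I would observe that $\beta(k,d)$ depends on $k$ and $d$ only through the dimension ratio $r=k/d$, via $\beta=\frac{s-1}{(s-1)+r}$, and is therefore strictly decreasing in $r$; since $\frac{k'}{d'}\le\frac{k}{d}$ this gives $\beta(k',d')\ge\beta(k,d)$. Write $\alpha,\beta,\alpha',\beta'$ for $\alpha(k,d),\beta(k,d),\alpha(k',d'),\beta(k',d')$, so that $\beta'\ge\beta$.

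The heart of the matter is that one cannot compare $m^{\alpha'}n^{\beta'}$ with $m^{\alpha}n^{\beta}$ term by term: lowering the dimension ratio raises the exponent of $n$ and lowers the exponent of $m$, so the hypothesis $n=O(m^s)$ must be invoked to trade the surplus power of $n$ for a power of $m$. Concretely, I would fix $c$ with $n\le c\,m^s$ and, using $\beta'-\beta\ge 0$, estimate
\[
  T_{k',d'}(m,n)=m^{\alpha'}n^{\beta'}=\bigl(m^{\alpha'}n^{\beta}\bigr)\,n^{\beta'-\beta}
  \le m^{\alpha'}n^{\beta}\,(c\,m^{s})^{\beta'-\beta}
  = c^{\,\beta'-\beta}\,m^{\,\alpha'+s\beta'-s\beta}\,n^{\beta}.
\]
Applying the identity twice — $\alpha'+s\beta'=s$ and $s-s\beta=\alpha$ — collapses the exponent of $m$ to $\alpha'+s\beta'-s\beta=s-s\beta=\alpha$. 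Since $\beta,\beta'\in(0,1)$ we have $0\le\beta'-\beta\le 1$, so the prefactor satisfies $c^{\,\beta'-\beta}\le\max\{1,c\}$, a constant depending only on the implied constant in $n=O(m^s)$. Hence $T_{k',d'}(m,n)\le\max\{1,c\}\cdot m^{\alpha}n^{\beta}=O(T_{k,d}(m,n))$, which is the claim.

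I do not anticipate a genuine obstacle: the entire content is the identity $\alpha+s\beta=s$ together with the monotonicity of $\beta$ in $k/d$, both immediate, plus the point that the constraint $n=O(m^s)$ is exactly what is needed to absorb the mismatch between the $n$-exponents. It is worth noting that the argument uses only $r'\le r$ and $s>1$; the hypothesis $\frac{k}{d}<1$ is not needed for the inequality itself, and merely keeps the exponents in their geometrically meaningful ranges.
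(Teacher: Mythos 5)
Your proof is correct and is at bottom the same argument as the paper's: both use the hypothesis $n=O(m^s)$ to absorb the excess power of $n$ (which arises because lowering the dimension ratio raises the $n$-exponent) into a power of $m$. You organize the computation more transparently via the identity $\alpha+s\beta=s$ and the observation that $\beta$ depends only on $k/d$, whereas the paper carries out the equivalent algebraic manipulation directly on the cross-multiplied exponents, but there is no genuine difference in method.
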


When handling incidences with $k$-dimensional varieties in $\RR^d$ using polynomial partitioning, the analysis usually involves incidence problems in lower dimensions or with lower-dimensional varieties (inside the varieties that partition the space).
When reaching a subproblem with $k'$-dimensional varieties in $\RR^{d'}$ such that $k'/d' \le k/d$, Lemma \ref{le:ratio test} states that the incidence bound of the subproblem is subsumed by the main incidence bound.
That motivates the following definition.
For integers $1\le k < d$, let
\[ R_{k,d} = \left\{(k',d')\in \ZZ^2\ :\ 1\le k'\le k,\ 2\le d' \le d,\ \frac{k}{d}< \frac{k'}{d'} < 1\right\}. \]
That is, $R_{k,d}$ is the set of all ``problematic'' ratios when studying incidences with $k$-dimensional varieties in $\RR^d$.

For example, consider the problem of studying incidences with three-dimensional varieties in $\RR^5$.
This problem has a dimension ratio of $3/5$, and $R_{3,5}=\{(2,3),(3,4)\}$.
When part of the analysis leads to incidences with curves in $\RR^2$, we expect this case to be easy to handle since $3/5>1/2$.
We should be more worried about incidences with two-dimensional varieties in $\RR^3$ and with three-dimensional varieties in $\RR^4$ (since $2/3>3/5$ and $3/4>3/5$).
To handle these cases, we can add the assumption that no bounded-degree four-dimensional variety contains more than $q_{3,4}$ of our three-dimensional varieties, and that no bounded-degree three-dimensional variety has a two-dimensional intersection with more than $q_{2,3}$ of the varieties. By presenting lower bound constructions, we demonstrate that such additional restrictions are often necessary.
When the above example does not contain the restriction involving $q_{2,3}$, the number of incidences could be $T_{2,3}(m,n)$, which is asymptotically larger than $T_{5,3}(m,n)$ (see Theorem \ref{th:LowerMain} below).

Due to the above, one might expect our incidence bound to be of the form
\begin{equation}\label{eq:(k,d)=(3,5)}
O\left(T_{3,5}(m,n)+m^*n^*q_{2,3}^*+m^*n^*q_{3,4}^*+m+n\right),
\end{equation}
for certain choices of exponents $*$.
%One also expects that when $q_{2,3}=n$, the second term becomes $T_{2,3}(m,n)$, and similarly for $q_{3,4}$.
% These are useful information that will help us find the exponents $*$, see the discussion at the end of the proof of Theorem \ref{th:IncReal} for more details.
Unfortunately, the situation is more involved.
For example, consider the case of incidences with four-dimensional objects in $\RR^7$.
Since $\frac{4}{7}<\frac{3}{5}$, we assume that every bounded-degree five-dimensional variety has a three-dimensional intersection with at most $q_{3,5}$ of the four-dimensional varieties.
Then, when bounding the number of incidences inside the partition, we need to bound the number of incidences between $m$ points and $q_{3,5}$ three-dimensional sub-varieties.
Relying on the bound in \eqref{eq:(k,d)=(3,5)} with $n$ replaced by $q_{3,5}$ leads to terms of form $m^*n^*q_{3,5}^*q_{2,3}^*$ and $m^*n^*q_{3,5}^*q_{3,4}^*$.
%as part of the analysis we need to handle incidences with three-dimensional objects in five-dimensional varieties, and as part of each such sub-problem we need to handle incidences with two-dimensional objects in three-dimensional varieties.
%Indeed, consider a configuration of points and two-dimensional varieties in $\RR^3$ with no $K_{s,t}$ and $T(m,n)$ incidences (for such a configuration, see Theorem \ref{th:LowerMain} below).We can place this configuration in an arbitrary 3-flat in $\RR^5$ and then replace each of the varieties with a three-dimensional variety, while maintaining the same incidence graph.We can then place the new configuration in an arbitrary 5-flat in $\RR^7$ and replace each of the varieties with a four-dimensional variety.

The above leads us to make the following definition.
A sequence of pairs of positive integers $( (k_0,d_0), (k_1,d_1),\dots, (k_u, d_u) )$ is said to be \emph{significant} if
\begin{enumerate}[(i)]
\item $k_j<d_j$ for all integers $0\le j\le u$.
\item $k_0\geq k_1\geq\dots\geq k_u\geq 1$.
\item  $d_0>d_1>\dots> d_u\geq 2$.
\item $\frac{k_0}{d_0}<\frac{k_1}{d_1}<\dots<\frac{k_u}{d_u} <1$.
\end{enumerate}

Let $S_{k,d}$ be the set of significant sequences $( (k_0,d_0), (k_1,d_1),\dots, (k_u, d_u) )$ with $(k_0,d_0) = (k,d)$.
Note that $((k,d))$ is also a sequence in $S_{k,d}$.
We are finally ready to rigorously state our main result.
For the proof, see Section \ref{sec:Rd}.

\begin{theorem} \label{th:IncReal}
Let $k,d,s,t,D$ be positive integers with $s\geq 2$ and $k<d$.
For any $\eps>0$, there exists a constant $c$ such that the following holds.
Let $\pts$ be a set of $m$ points and $\vars$ be a set of $n$ irreducible varieties of dimension at most $k$ and degree at most $D$, both  in $\RR^d$.
Assume that the incidence graph of $\pts\times \vars$ contains no copy of $K_{s,t}$.
Moreover, for each pair $(k',d')\in R_{k,d}$, every $d'$-dimensional variety of degree at most $c$ has a $k'$-dimensional intersection with at most $q_{k',d'}$ varieties of $\vars$.
Then
\begin{align}
I(\pts,\vars)=O\Bigg( \sum_{( (k,d),(k_1,d_1), \ldots, (k_u, d_u) ) \in S_{k,d}} \hspace{-13mm}
m^{\frac{sk_u}{sd_u-d_u+k_u}+\eps}\Bigg(n^{\frac{d}{d-k}}q_{k_1,d_1}^{\frac{d_1}{d_1-k_1}-\frac{d}{d-k}}\cdots &q_{k_u,d_u}^{\frac{d_u}{d_u-k_u}-\frac{d_{u-1}}{d_{u-1}-k_{u-1}}}\Bigg)^{\frac{(d_u-k_u)(s-1)}{sd_u-d_u+k_u}} \nonumber \\
&\hspace{14mm}+ m + n \Bigg). \label{eq:generalBound}
\end{align}
\end{theorem}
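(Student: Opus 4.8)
The plan is to prove the theorem by a double induction: an outer induction on the ambient dimension $d$ and, for each fixed $d$, an inner induction on $m$. When $d=2$ we must have $k=1$, and then $R_{1,2}=\emptyset$ and $S_{1,2}=\{((1,2))\}$, so the claim reduces to the classical planar incidence bound for points and curves whose incidence graph is $K_{s,t}$-free; the cases of $0$-dimensional varieties and of $m$ below an absolute constant are trivial and form the base of the inner induction. Two preliminary reductions will be used freely. First, discarding the varieties incident to fewer than $s$ points (they contribute $O_{s}(n)$ incidences), the $K_{s,t}$-free hypothesis gives $\sum_{V\in\vars}\binom{|\{p\in\pts\,:\,p\in V\}|}{s}<t\binom{m}{s}$, whence $I(\pts,\vars)=O_{s,t}\!\left(mn^{1-1/s}+n\right)$; since a direct computation shows $mn^{1-1/s}=O(T_{k,d}(m,n))$ whenever $n\ge m^{s}$, we may assume $n=O(m^{s})$. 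Second, since each variety of $\vars$ is irreducible, when $V$ is not contained in a variety $W$ the intersection $V\cap W$ has dimension strictly smaller than $\dim V$; this fact drives the recursion on $d$.

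Fix a large constant $r=r(\eps,D,d)$ and a large value of the constant $c$ (at least the degree of the partitioning polynomials below), and apply polynomial partitioning: there is a polynomial $f$ of degree $O(r)$ such that $\RR^{d}\setminus Z(f)$ has $O(r^{d})$ open connected cells, each meeting $O(m/r^{d})$ points of $\pts$. Split $I(\pts,\vars)=I_{\mathrm{cell}}+I_{Z}$ according to whether the incident point lies off or on $Z(f)$. For $I_{\mathrm{cell}}$: a variety $V\in\vars$ with $V\not\subseteq Z(f)$ is irreducible of dimension at most $k$ and bounded degree, so by the standard B\'ezout/Milnor--Thom estimate it meets only $O_{D,d}(r^{k})$ cells; hence, writing $n_{\Delta}$ for the number of varieties meeting a cell $\Delta$, we have $\sum_{\Delta}n_{\Delta}=O_{D,d}(nr^{k})$, and inside each cell the incidence graph is still $K_{s,t}$-free and every restriction $q_{k',d'}$ still holds. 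Applying the inner induction in each cell and summing with H\"older's inequality over the $O(r^{d})$ cells, the term of \eqref{eq:generalBound} attached to a sequence $\left((k,d),(k_{1},d_{1}),\dots,(k_{u},d_{u})\right)\in S_{k,d}$ reproduces itself multiplied by a power of $r$ equal to $r^{\,d-d\alpha-(d-k)\beta-d\eps}$, where $\alpha=\frac{sk_{u}}{sd_{u}-d_{u}+k_{u}}$ is the exponent of $m$ without the $\eps$ and $\beta=\frac{d}{d-k}\cdot\frac{(d_{u}-k_{u})(s-1)}{sd_{u}-d_{u}+k_{u}}$ is the total exponent of $n$. The identity $\frac{sk_{u}}{sd_{u}-d_{u}+k_{u}}+\frac{(d_{u}-k_{u})(s-1)}{sd_{u}-d_{u}+k_{u}}=1$ gives $d\alpha+(d-k)\beta=d$ (and one checks in any case that the power of $r$ is at most $r^{-d\eps}$), so taking $r$ large enough makes this factor smaller than $1/2$, and $I_{\mathrm{cell}}$ is absorbed into half of the right-hand side of \eqref{eq:generalBound} together with the $m+n$ term.

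The substantive part is $I_{Z}$. Decompose $Z(f)$ into irreducible components, each of degree $O(r)\le c$; components of dimension less than $d-1$ are handled by the outer induction in lower dimensions, so we focus on a $(d-1)$-dimensional component $W$, which after a generic linear projection to $\RR^{d-1}$ (finite-to-one of bounded degree on $W$, hence preserving incidences, degrees, and — after decomposing intersections into components and enlarging $t$ by a bounded factor — the $K_{s,t}$-free property and the restrictions) we regard as living in $\RR^{d-1}$. Points on $W$ are incident to two types of varieties. If $V\subseteq W$ then, since $(k,d-1)\in R_{k,d}$ when $k<d-1$, there are at most $q_{k,d-1}$ such $V$ (when $k=d-1$ such a $V$ is a component of $Z(f)$, so there are $O(r)$ of them, contributing $O(m)$); we invoke the outer induction in dimension $d-1$ with $n$ replaced by $n_{0}:=|\{V\in\vars\,:\,V\subseteq W\}|\le\min(n,q_{k,d-1})$. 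Prepending $(k,d)$ to a sequence of $S_{k,d-1}$ yields a sequence of $S_{k,d}$, all the involved restrictions match because $R_{k,d-1}\subseteq R_{k,d}$, and the resulting term is dominated by the corresponding term of \eqref{eq:generalBound} once we know $n_{0}^{\frac{d-1}{d-1-k}}\le n^{\frac{d}{d-k}}\,q_{k,d-1}^{\frac{d-1}{d-1-k}-\frac{d}{d-k}}$, which follows from $n_{0}\le n$, $n_{0}\le q_{k,d-1}$ and the identity $\frac{d-1}{d-1-k}-\frac{d}{d-k}=\frac{k}{(d-1-k)(d-k)}>0$. If $V\not\subseteq W$, irreducibility forces $\dim(V\cap W)\le k-1$ with bounded degree, so (after decomposing into components) we get an incidence problem with $(k-1)$-dimensional varieties in $\RR^{d-1}$, which we bound by the outer induction in dimension $d-1$, using the genuine bounds $q_{k',d'}$ for $(k',d')\in R_{k-1,d-1}\cap R_{k,d}$ and the trivial bound $q_{k',d'}:=n$ otherwise. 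Since $\frac{k-1}{d-1}<\frac{k}{d}$, Lemma \ref{le:ratio test} converts the main term of that bound into $O(T_{k,d}(m,n))$, and — using that for every pair on the sequence with ratio at most $\frac{k}{d}$ one has $\frac{d'}{d'-k'}\le\frac{d}{d-k}$, so the corresponding $q$-powers telescope favourably — each remaining term either equals (up to a constant) a term of \eqref{eq:generalBound} attached to a suitable sequence of $S_{k,d}$ or is again absorbed by Lemma \ref{le:ratio test}.

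The main obstacle is precisely this last bookkeeping: one must check that the recursion tree, whose nodes are the pairs $(k',d')$ equal to $(k,d)$, lying in $R_{k,d}$, or having dimension ratio at most $\frac{k}{d}$, produces — after collecting the cell contributions at every node — only quantities dominated by $\sum_{S_{k,d}}(\cdots)+m+n$, and that it terminates (every step strictly decreases $d$). The supporting geometric facts — the bound on the number of cells met by a bounded-degree variety, the existence of a generic projection $\RR^{d}\to\RR^{d-1}$ preserving the relevant incidence structure on $Z(f)$, and the strict drop in dimension when intersecting an irreducible variety with a hypersurface not containing it — are standard; the real content is organizing the induction so that the exponents in \eqref{eq:generalBound} close up exactly, and choosing the constants ($r$, then $c$, then the implied constant) in the right order in terms of $\eps,D,s,t,d,k$.
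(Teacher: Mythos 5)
Your proposal takes the same route as the paper: polynomial partitioning at a constant degree $r$, an outer induction on $d$, an inner induction on $m$ (the paper uses $m+n$, an immaterial difference), a H\"older sum over the cells that cancels the $r$-powers exactly because $d\alpha+(d-k)\beta=d$, and — for the points on $Z(f)$ — a case split according to $\dim(V\cap W)$ together with Lemma~\ref{le:ratio test} and the ``significant-sequence'' bookkeeping to fold the recursive terms back into \eqref{eq:generalBound}. Your computations (the identity giving $r^{-d\eps}$, the absorption $n_0^{\frac{d-1}{d-1-k}}\le n^{\frac{d}{d-k}}q_{k,d-1}^{\frac{d-1}{d-1-k}-\frac{d}{d-k}}$, the monotonicity $\frac{k'}{d'}\le\frac{k}{d}\Rightarrow\frac{d'}{d'-k'}\le\frac{d}{d-k}$) are all correct and match what the paper does.

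The one organizational difference worth noting: to run the outer induction on a component $W$ of $Z(f)$, you \emph{project generically} to $\RR^{\dim W}$ and then invoke the lower-dimensional theorem. The paper instead strengthens the statement to Theorem~\ref{th:IncRealGen}, in which the points already lie on a constant-degree $d$-dimensional variety $W\subseteq\RR^{d^*}$; then the recursion simply replaces $W$ by an irreducible component $U$ of $Z(f)\cap W$ in the \emph{same} ambient space, with the same set $\vars$. That formulation avoids projections entirely in the induction, and in particular it lets the paper transfer the $q_{k',d'}$-restrictions trivially (the restriction is about $d'$-dimensional varieties in the ambient $\RR^{d^*}$, unchanged by passing to $U$), whereas your projection approach requires you to verify that every $d'$-dimensional variety $Y'$ of bounded degree in $\RR^{d-1}$ pulls back (via $\pi^{-1}(Y')\cap W$) to a $d'$-dimensional variety in $\RR^d$ whose degree is still at most $c$, after which the hypothesis applies. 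This is doable but adds exactly the kind of degree/restriction bookkeeping the paper's generalization was designed to sidestep. You correctly identify this bookkeeping — ``organizing the induction so the exponents close up exactly, and choosing the constants in the right order'' — as the real content; the paper carries it out in Section~\ref{sec:Rd} via the explicit $\overline{q}_{k^*,d^*}$ and the case analysis around \eqref{eq:GenTerm}, and if you want to push your projection version to a complete proof, that is the place where you still owe the detailed argument.
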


\parag{Remarks.} (i) The sequence $((k,d)) \in S_{k,d}$ results in \eqref{eq:generalBound} containing the leading term $T_{k,d}(m,n)$.  \\[2mm]
(ii) Both $c$ and the constant hidden in the $O(\cdot)$-notation of \eqref{eq:generalBound} depend on $k,d,s,t,D,\eps$.\\[2mm]
(iii) When $k=d-1$ then $R_{k,d}$ is empty and $S_{k,d}=\{((k,d))\}$. In this case no parameter $q_{k',d'}$ is necessary, so Theorem \ref{th:IncReal} generalizes Theorem \ref{th:UpperBounds}(b). \\[2mm]
(iv) When $k=d/2$, Theorem \ref{th:IncReal} has the same leading term as Theorem \ref{th:UpperBounds}(a), but with the additional terms depending on the parameters $q_{k',d'}$.
This is because Theorem \ref{th:IncReal} does not make the transversality assumption that appears in Theorem \ref{th:UpperBounds}(a) and also counts incidences with singular points. See Theorem \ref{th:RdTrans} below for a proper generalization of Theorem \ref{th:UpperBounds}(a). \\[2mm]
(v) When $k=1$, Theorem \ref{th:IncReal} has the same leading term as Theorem \ref{th:UpperBounds}(c), but with a different dependency on the parameters $q_{k',d'}$. Theorem \ref{th:UpperBounds}(c) has a simpler dependency in these parameters, but also has additional restrictions regarding them. The different dependencies are obtained by relying on properties that are special to curves.

\parag{Lower bounds.}
The following theorem shows that the main term in the bound of Theorem \ref{th:UpperBounds} is tight up to sub-polynomial factors, when $s=2$ and for specific values of $k$ and $d$.
It also provides non-trivial bounds for the case of $s=3$.

\begin{theorem} \label{th:LowerMain}
For any integer $d\ge 2$ there exists a sufficiently large constant $t$ satisfying the following claims for every $\eps>0$:

(a) For any $n$ and $m=O(n^d)$, there exist a set $\pts$ of $m$ points and a set $\hyperp$ of $n$ hyperplanes, both in $\RR^d$, such that the incidence graph of $\pts\times \hyperp$ contains no $K_{2,t+1}$ and
\[ I(\pts,\hyperp) = \Omega\left(m^{\frac{2d-2}{2d-1}}n^{\frac{d}{2d-1}-\eps} +m + n\right).\]

(b) For any $d\ge 4$, $n$, and $m=O(n^{d-2})$, there exist a set $\pts$ of $m$ points and a set $\hyperp$ of $n$ hyperplanes, both in $\RR^d$, such that the incidence graph of $\pts\times \hyperp$ contains no $K_{3,t+1}$ and
\[ I(\pts,\hyperp) = \Omega\left(m^{\frac{3d^2-9d+2}{(d-2)(3d-1)}} n^{\frac{2d}{3d-1}-\eps} +m + n\right).\]
\end{theorem}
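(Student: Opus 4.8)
The plan is to build both constructions on top of known tight lower-bound constructions for the point-hyperplane incidence problem in $\RR^{d'}$ for a smaller dimension $d'$, then lift them to $\RR^d$ by taking products with a generic flat (or by a Veronese-type embedding that keeps hyperplanes hyperplanes). For part~(a), the target exponent pair is exactly the one appearing as the leading term of Theorem~\ref{th:UpperBounds}(b) with $s=2$ and $k=d-1$, namely $m^{\frac{2d-2}{2d-1}}n^{\frac{d}{2d-1}}$, so what is needed is a matching construction that also avoids $K_{2,t+1}$. First I would recall (or reproduce) the classical construction: take $\pts$ to be a suitably sized piece of an integer lattice box $[N_1]\times\cdots\times[N_d]$ with the side lengths tuned so that $|\pts| = m$, and let $\hyperp$ be the set of hyperplanes defined by integer linear equations with small coefficients, chosen so that each contains roughly the "expected" number $\approx m/n \cdot (m^{\frac{2d-2}{2d-1}}n^{\frac{d}{2d-1}}/m)$ of lattice points. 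A standard counting / divisor-bound argument (losing only the $n^{-\eps}$ factor, which accounts for the $\eps$ in the statement) shows the total incidence count is $\Omega(m^{\frac{2d-2}{2d-1}}n^{\frac{d}{2d-1}-\eps})$, and the $K_{2,t+1}$-freeness follows because two points in the lattice box determine only $O(1)$ of the admissible hyperplanes once the coefficient size is bounded — this is where the "sufficiently large constant $t$" is used. The range restriction $m = O(n^d)$ is exactly what makes the lattice-versus-hyperplane-family sizes compatible.

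For part~(b), where we want to forbid $K_{3,t+1}$ rather than $K_{2,t+1}$, the key new idea is that forbidding a larger complete bipartite subgraph lets the construction be "denser" in a controlled way, which is reflected in the different exponents and the narrower range $m = O(n^{d-2})$. Here I would use a construction in which the point set again lives in a lattice box but the hyperplanes are chosen so that no three points are simultaneously incident to more than $t$ common hyperplanes; geometrically, three generic points span a $2$-flat, so one wants the family of hyperplanes through a fixed $2$-flat to be small. A clean way to arrange this is to pass through a lower-dimensional model: take an optimal $K_{2,t'+1}$-free point-hyperplane construction in $\RR^{d-2}$ achieving the exponents of part~(a) in dimension $d-2$, and then form a product-type construction with two extra "free" coordinates, distributing points and hyperplanes across a grid in those two coordinates. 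One checks that a $K_{3,t+1}$ in the lifted incidence graph would project to a $K_{2,t'+1}$ in one of the $\RR^{d-2}$-fibers (after a pigeonhole on the two extra coordinates), contradicting the choice of $t'$. Bookkeeping the parameters — the sizes of the two grids versus $m$ and $n$ — and optimizing yields the claimed exponents $\frac{3d^2-9d+2}{(d-2)(3d-1)}$ and $\frac{2d}{3d-1}$; note $3d^2-9d+2 = (d-2)(3d-3) + (d-4)$, which is why the $d\ge 4$ hypothesis appears.

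The routine parts are the parameter optimizations and the elementary number theory (divisor bounds, counting lattice points on hyperplanes), which only cost the sub-polynomial $n^{-\eps}$ slack already built into the statement. The main obstacle I expect is the $K_{3,t+1}$-freeness in part~(b): one must design the two-extra-coordinate grid and the distribution of hyperplanes carefully enough that every triple of points sees only $O(1)$ common hyperplanes, while simultaneously not sacrificing too many incidences. Getting both the upper bound on common hyperplanes through any triple and the lower bound on the total incidence count from the same construction is the delicate balancing act; the dimension-ratio philosophy of Lemma~\ref{le:ratio test} is a useful guide for which lower-dimensional model to start from and how to allocate the "budget" between the core construction and the lifting coordinates.
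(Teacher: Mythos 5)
Your proposal identifies the broad framework (lattice points, hyperplanes with integer normals, a range restriction on $m$ versus $n$), but it omits the one ingredient that makes the argument work, and this omission is fatal to both parts.

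For part (a), you claim that $K_{2,t+1}$-freeness follows because ``two points in the lattice box determine only $O(1)$ of the admissible hyperplanes once the coefficient size is bounded.'' This is false: if $\hyperp$ consists of hyperplanes with integer normal vectors in $[-N,N]^d$, then the line through two lattice points lies in $\Theta(N^{d-2})$ such hyperplanes, which is unbounded. The paper's construction instead invokes a theorem of Balko, Cibulka, and Valtr (\cite[Corollary~1]{BSC17}), which produces a set $V$ of $\Theta\bigl(N^{d/(d-1)-\eps'}\bigr)$ primitive lattice vectors in $[-N,N]^d$ such that no hyperplane through the origin contains more than $t$ of them. The hyperplanes are then precisely those with normal in $V$ that hit the grid; the $K_{2,t+1}$-freeness follows because the hyperplanes through a fixed line have normals lying in a $(d-1)$-dimensional subspace, and $V$ is in general position with respect to such subspaces. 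The $n^{-\eps}$ slack comes from this general-position construction, not from a divisor bound. Without this tool (or an equivalent one), your hyperplane family is either too small to give the claimed incidence count or too large to avoid $K_{2,t+1}$.

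For part (b), your proposed product construction lifting a $K_{2,t'+1}$-free configuration from $\RR^{d-2}$ via two extra coordinates, with $K_{3,t+1}$-freeness obtained by ``a pigeonhole on the two extra coordinates,'' does not hold up: three points with distinct fiber coordinates do not project to three points in one $\RR^{d-2}$-fiber, and the hyperplanes you need in $\RR^d$ to maximize incidences do not restrict nicely to fibers, so the claimed projection of a $K_{3,t+1}$ to a $K_{2,t'+1}$ is unjustified. The paper's construction for (b) is different in structure: it uses the pigeonhole principle on squared integer distances in a grid to place the $m$ points on a single sphere, so that no line contains three of them, and then applies the Balko--Cibulka--Valtr result to $(d-2)$-flats rather than to hyperplanes. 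Three non-collinear points span a $2$-flat whose normal directions form a $(d-2)$-dimensional subspace, so at most $t$ normals of $V$ lie in it and hence at most $t$ hyperplanes of $\hyperp$ contain any triple of points. This sphere step is what produces the exponents and the narrower range $m = O(n^{d-2})$, and something of this strength is needed in place of your product construction.
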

\parag{Remarks.} (i) Theorem \ref{th:LowerMain} is stated for flats, but it is not difficult to extend it to many other types of varieties. By using the same approach as in \cite{Sheffer16}, one obtains that all three parts of the theorem also hold for spheres, paraboloids, and many other families of varieties of any constant degree. \\[2mm]
(ii) While the bound in part (b) does not match the corresponding upper bound $m^{\frac{3d-3}{3d-1}}n^{\frac{2d}{3d-1}}$, both bounds approach $mn^{2/3}$ as $d$ increases. \\[2mm]
(iii) Several cases of part (a) of Theorem \ref{th:LowerMain} were previously known. When $d=2$, this is the standard lower bound of the Szemer\'edi--Trotter theorem (for example, see \cite{PS04}). The bound was derived in $\RR^3$ by Apfelbaum and Sharir \cite{AS07}. It was also derived for any $d\ge 4$ in \cite{Sheffer16}, but only when $n =\Theta(m^{3/(d+1)})$ up to
sub-polynomial factors. \\[2mm]

After considering lower bounds for the main term in the bound of Theorem \ref{th:LowerMain}, we move to the other terms of that bound.
We show that when $s=2$, this bound must have terms containing various parameters $q_{k',d'}$.
While the dependency in these parameters cannot be removed, it seems likely that it could be replaced with a better dependency.
Such dependency is already known for a few cases involving curves in dimensions $2\le d \le 4$ (for example, see \cite{ShSo17,ShSo17b}).

\begin{theorem} \label{th:LowerOther}
Consider positive integers $k,d,d'$ that satisfy $(d'-1)/d' > k/d$.
Then it is impossible to completely remove the dependency in $q_{d'-1,d'}$ from the incidence bound of Theorem \ref{th:IncReal} in the case of $k$-flats in $\RR^d$ and $s=2$.
\end{theorem}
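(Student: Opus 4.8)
The plan is to construct, for each fixed choice of $d'$ with $(d'-1)/d' > k/d$, a point–flat configuration in $\RR^d$ in which a single bounded-degree $d'$-dimensional variety (in fact a $d'$-flat) contains many of the $k$-flats, and in which the number of incidences substantially exceeds $T_{k,d}(m,n)$. Concretely, fix a $d'$-flat $F \subset \RR^d$ and build inside $F$ a near-extremal configuration for the incidence problem between points and $(d'-1)$-flats \emph{in $F \cong \RR^{d'}$}: by Theorem~\ref{th:LowerMain}(a) applied in dimension $d'$, there are $m$ points and $N$ hyperplanes of $F$ with incidence graph free of $K_{2,t+1}$ and with $I = \Omega\bigl(m^{\frac{2d'-2}{2d'-1}} N^{\frac{d'}{2d'-1} - \eps}\bigr)$, valid whenever $m = O(N^{d'})$. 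Each such hyperplane of $F$ is a $(d'-1)$-flat of $\RR^d$, so this is a legitimate instance of the $(d'-1,d')$-flats-in-$\RR^d$ incidence problem, with all $N = q_{d'-1,d'}$ of the flats lying in the single $d'$-flat $F$. Setting $n = q_{d'-1,d'} = N$, this shows that the true maximum number of incidences is at least this quantity, so any valid upper bound of the shape in Theorem~\ref{th:IncReal} must dominate it.

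**The key step** is then to compare this lower bound with what Theorem~\ref{th:IncReal} would give if the $q_{d'-1,d'}$-term were deleted. If we remove that term, the only surviving contributions (for this range of parameters, and after checking via the Dimension Ratio Lemma~\ref{le:ratio test} that the other significant-sequence terms are subsumed) are $T_{k,d}(m,n)$, $m$, and $n$. So it suffices to exhibit a regime of $m$ and $n = q_{d'-1,d'}$ in which
\[
m^{\frac{2d'-2}{2d'-1}} n^{\frac{d'}{2d'-1} - \eps} \ \gg\ m^{\frac{sk}{ds-d+k}} n^{\frac{ds-d}{ds-d+k}} + m + n,
\]
with $s=2$, i.e. against $T_{k,d}(m,n) = m^{\frac{2k}{d+k}} n^{\frac{d}{d+k}}$. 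The natural choice is to take $m$ as large as the construction allows, $m = \Theta(n^{d'})$ (ignoring sub-polynomial factors), which makes the $(d'-1,d')$-in-$F$ configuration maximally incidence-rich: then the left side is $\Theta\bigl(n^{d'(2d'-2)/(2d'-1) + d'/(2d'-1) - \eps}\bigr) = \Theta\bigl(n^{(2d'^2 - d')/(2d'-1) - \eps}\bigr) = \Theta(n^{d' - \eps'})$ for a suitable $\eps'$. One then checks that $T_{k,d}(n^{d'}, n) = n^{2kd'/(d+k) + d/(d+k)}$, and since $(d'-1)/d' > k/d$ is exactly the hypothesis, a short computation shows $d' > \frac{2kd' + d}{d+k}$ precisely when $d'(d+k) > 2kd' + d$, i.e. $d'(d-k) > d$, i.e. $d'/d > 1/(d-k)$; one must verify this follows from $(d'-1)/d' > k/d$. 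Since $k \ge 1$, we have $1/(d-k) \le 1/(d-k)$ and the inequality $(d'-1)/d' > k/d$ rearranges to $d'/d > d'-1)$... — more cleanly, $(d'-1)/d' > k/d \iff d(d'-1) > kd' \iff d' (d-k) > d$, which is \emph{exactly} $d'/d > 1/(d-k)$, giving $d' > T_{k,d}$'s exponent and hence the left side dominates $T_{k,d}$; the $m$ and $n$ terms are $n^{d'}$ and $n$, the former matched up to $n^{\eps}$ and the latter trivially dominated. Thus deleting the $q_{d'-1,d'}$-term produces a bound that is false, which is the claim.

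**The main obstacle**, and the part requiring genuine care rather than bookkeeping, is twofold. First, one must confirm that in the chosen parameter regime $m = \Theta(n^{d'})$ \emph{none of the other terms} in \eqref{eq:generalBound} — the ones indexed by longer significant sequences through various pairs $(k_i,d_i)$ — can themselves absorb the lower bound without reference to $q_{d'-1,d'}$; here one invokes Lemma~\ref{le:ratio test} together with the fact that in the construction all $q_{k',d'}$ for $(k',d') \ne (d'-1,d')$ can be taken to be small (e.g. $O(1)$), since the only structure we have planted is the single rich $d'$-flat $F$. Second, there is a subtlety about the constant $c$ in Theorem~\ref{th:IncReal}: the theorem only constrains varieties of degree at most $c$, and $c$ depends on $k,d,s,t,D,\eps$; but a $d'$-flat has degree $1 \le c$, so $F$ is indeed counted, and one should make sure the $K_{2,t+1}$-freeness from Theorem~\ref{th:LowerMain}(a) is stated for the same $t$ (or note that enlarging $t$ only weakens the hypothesis, and the upper bound's dependence on $t$ is irrelevant to whether a \emph{fixed} term can be removed). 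Finally one should remark, as the surrounding text already hints, that the exponent of $q_{d'-1,d'}$ we are forced to retain is almost certainly not optimal — the construction shows only that \emph{some} positive power of $q_{d'-1,d'}$ must appear, not that the specific exponent in \eqref{eq:generalBound} is best possible.
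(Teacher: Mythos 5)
Your overall strategy is the same as the paper's: place a near-extremal point--hyperplane configuration from Theorem~\ref{th:LowerMain}(a) inside a $d'$-flat $F\subset\RR^d$ and compare the resulting incidence count against the bound with the $q_{d'-1,d'}$-term deleted. However, there are two gaps, one of which is fatal as written.

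The fatal one is the parameter choice $m=\Theta(n^{d'})$. In that regime the construction gives
\[
I \ =\ \Theta\!\left(m^{\frac{2d'-2}{2d'-1}} n^{\frac{d'}{2d'-1}-\eps}\right)\ =\ \Theta\!\left(n^{d'-\eps'}\right),
\qquad\text{while}\qquad m=\Theta\!\left(n^{d'}\right),
\]
so the incidence count is in fact \emph{smaller} than $m$ by a polynomial factor $n^{\eps'}$. You note this yourself (``the former matched up to $n^{\eps}$''), but then conclude that ``deleting the $q_{d'-1,d'}$-term produces a bound that is false,'' which does not follow: the bound $O\!\left(T_{k,d}(m,n)+m+n\right)$ is \emph{not} violated here, because the $+m$ term already dominates. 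The fix is to move off this boundary: set $m=\Theta(n^{\gamma})$ for some $\gamma$ strictly less than $d'$ but large enough that the linear-in-$\gamma$ exponent $L(\gamma)=\tfrac{(2d'-2)\gamma+d'}{2d'-1}$ exceeds $R(\gamma)=\tfrac{2k\gamma+d}{d+k}$. Your own computation shows $L'(\gamma)>R'(\gamma)$ and $L(d')>R(d')$ (both equivalent to $(d'-1)/d'>k/d$), while $L(\gamma)>\gamma$ holds precisely for $\gamma<d'$; hence there is an interval of valid $\gamma$ just below $d'$ where the construction beats all three terms $T_{k,d}(m,n)$, $m$, $n$ simultaneously. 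The paper sidesteps the issue by invoking Lemma~\ref{le:ratio test} (and leaving the choice of $m,n$ implicit), but if one writes an explicit regime it must avoid the endpoint $\gamma=d'$.

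The second, smaller gap: the theorem is stated for $k$-flats, whereas your construction produces $(d'-1)$-flats inside $F$. Since $(d'-1,d')\in R_{k,d}$ forces $d'-1\le k$, these are varieties of dimension at most $k$ and hence admissible for Theorem~\ref{th:IncReal}, but they are not $k$-flats when $d'-1<k$. The paper handles this by replacing each $(d'-1)$-flat $h$ with a generic $k$-flat of $\RR^d$ through $h$; genericity ensures the new $k$-flat meets $F$ exactly in $h$, so no incidences are gained or lost and the $K_{2,t+1}$-freeness is preserved. You should add this step to obtain a configuration of actual $k$-flats.

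Your surrounding observations — that the remaining significant-sequence terms can be controlled via Lemma~\ref{le:ratio test} and that the degree threshold $c\ge 1$ captures the flat $F$ — are sound and in the spirit of the paper, which leaves them implicit.
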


Proofs for Theorems \ref{th:LowerMain} and \ref{th:LowerOther} can be found in Section \ref{sec:Lower}.

\parag{Incidences with transverse varieties.}
Let $\vars$ be a set of varieties in $\RR^d$.
We say that the varieties of $\vars$ are \emph{transverse} if every two varieties $h_1,h_2\in \vars$ have the following property: If $p$ is a regular point of both $h_1$ and $h_2$, then the intersection of the tangent flats of $h_1$ and $h_2$ at $p$ contains only the origin (we consider a tangent flat as a linear subspace).
Note that this definition is not very interesting for varieties of dimension larger than $d/2$, since such transverse varieties can intersect only in singular points.

Let $h$ and $h'$ be $k$-dimensional transverse varieties in $\RR^d$, and let $U$ be a $d'$-dimensional variety.
If $U$ has a $k'$-dimensional intersection with both $h$ and $h'$ where $k'>d'/2$, then every point of $h\cap h'\cap U$ is a singular point of at least one of $U,h,h',h\cap U,$ and $h'\cap U$.
Intuitively, this implies that we should mainly worry about $(k',d')\in R_{k,d}$ that satisfy $\frac{k'}{d'}\leq\frac{1}{2}$.
Let $\overline{R}_{k,d}$ be the set of pairs $(k',d')\in R_{k,d}$ satisfying $k'/d' \le 1/2$.
Let $\overline{S}_{k,d}$ be the set of sequences of $S_{k,d}$ that consist only of elements of $\overline{R}_{k,d}$.
We derive the following bound for incidences with transverse varieties.

\begin{theorem} \label{th:RdTrans}
Let $k,d,s,t,D$ be positive integers with $s\geq 2$ and $k\le d/2$.
For any $\eps>0$, there exists a constant $c$ such that the following holds.
Let $\pts$ be a set of $m$ points and let $\vars$ be a set of $n$ irreducible varieties of dimension at most $k$ and degree at most $D$, both  in $\RR^d$.
Assume that the incidence graph of $\pts\times \vars$ contains no copy of $K_{s,t}$, and that the varieties of $\vars$ are transverse.
Moreover, for each pair $(k',d')\in \overline{R}_{k,d}$, every $d'$-dimensional variety of degree at most $c$ has a $k'$-dimensional intersection with at most $q_{k',d'}$ varieties of $\vars$.
Then
\begin{align*}
I^*(&\pts,\vars) \\
&=O\Bigg( \sum_{( (k_0,d_0), \ldots, (k_u, d_u) ) \in \overline{S}_{k,d}} \hspace{-13mm}
m^{\frac{sk_u}{sd_u-d_u+k_u}+\eps}\Bigg(n^{\frac{d}{d-k}}q_{k_1,d_1}^{\frac{d_1}{d_1-k_1}-\frac{d}{d-k}}\cdots q_{k_u,d_u}^{\frac{d_u}{d_u-k_u}-\frac{d_{u-1}}{d_{u-1}-k_{u-1}}}\Bigg)^{\frac{(d_u-k_u)(s-1)}{sd_u-d_u+k_u}} \nonumber \\
&\hspace{125mm}+ m + n \Bigg).
\end{align*}
\end{theorem}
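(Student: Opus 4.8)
The plan is to mimic the proof of Theorem \ref{th:IncReal}, since the desired bound for transverse varieties is literally the bound of Theorem \ref{th:IncReal} with the index set $S_{k,d}$ replaced by $\overline{S}_{k,d}$. Recall that the proof of Theorem \ref{th:IncReal} runs by induction on $d$ (and within a fixed $d$, on $m$ or $n$), applying polynomial partitioning with a constant-degree polynomial to split $\RR^d$ into $O(r^d)$ cells, bounding the contribution of points interior to cells by the inductive hypothesis in $\RR^d$, and bounding the contribution of points on the partitioning hypersurface $Z$ by recursing on incidence problems inside the irreducible components of $Z$. Those components are varieties of dimension $d-1$, inside which our original $k$-dimensional varieties restrict either to $(d-1)$-dimensional sub-varieties (handled by the $q_{k',d'}$-type assumptions, spawning the longer significant sequences) or to lower-dimensional sub-varieties. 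The significant sequences in $S_{k,d}$ are exactly the bookkeeping device that records which chain of such restrictions we pass through; the terms with trailing pair $(k_u,d_u)$ come from a branch of the recursion that, after passing through the intersections recorded by $(k_1,d_1),\dots,(k_u,d_u)$, reaches a subproblem whose dimension ratio is $\le k/d$ and is therefore absorbed into the main term by Lemma \ref{le:ratio test}.

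The modification for the transverse case is to show that every branch of this recursion which passes through some pair $(k',d')$ with $k'/d' > 1/2$ contributes only to $I(\pts,\vars)\setminus I^*(\pts,\vars)$, i.e.\ can be charged to incidences at singular points, and hence does not appear when we count only $I^*$. Concretely, suppose the recursion has restricted our varieties to a $d'$-dimensional component $U$ of some partitioning hypersurface, and two of the restricted varieties $h\cap U$, $h'\cap U$ are $k'$-dimensional with $k' > d'/2$. As observed in the paragraph preceding the statement, any point $p\in h\cap h'\cap U$ is then a singular point of at least one of $U,h,h',h\cap U,h'\cap U$: if $p$ were regular on all of them, then inside the $d'$-space $T_pU$ the tangent flats of $h\cap U$ and $h'\cap U$ would be $k'$-flats, hence (as $2k' > d'$) would meet in a line, contradicting transversality of $h$ and $h'$ (their tangent $k$-flats in $\RR^d$ contain these two flats, so they would also meet in a line). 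The key point is that an incidence $(p,h)$ counted in $I^*$ has $p$ regular on $h$; one then needs the (standard, degree-bounded) fact that there are only $O_{D,d}(1)$ singular-locus varieties of bounded dimension in play, and that incidences with $p$ on the singular locus of $h\cap U$ (a proper subvariety of smaller dimension) or of $U$ get pushed down to a strictly lower-dimensional subproblem, where by induction they are again governed only by $\overline{S}$. This is the reason $\overline{R}_{k,d}$ and $\overline{S}_{k,d}$ suffice: whenever the recursion would have forced us to introduce a parameter $q_{k',d'}$ with $k'/d' > 1/2$, the transversality hypothesis lets us instead route those incidences into lower-dimensional subproblems, with no new parameter and with a strictly smaller dimension parameter $d'$, so the recursion still terminates and produces only sequences all of whose entries lie in $\overline{R}_{k,d}$.

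Accordingly, the steps I would carry out, in order, are: (1) set up the same induction on $d$ as in the proof of Theorem \ref{th:IncReal}, with the base case $k = d-1$ (where $\overline{R}_{k,d}\subseteq R_{k,d}$ is empty or the bound reduces to Theorem \ref{th:UpperBounds}(b)) and the trivial case $m$ or $n$ small; (2) apply constant-degree polynomial partitioning, bound the cell-interior incidences by the $\RR^d$ inductive hypothesis and check that the resulting terms are each dominated by a corresponding term in the claimed $\overline{S}_{k,d}$-sum (the power-of-$r$ cancellation is identical to the non-transverse case); (3) for points on the partition $Z$, pass to the $O_{D,d}(1)$ irreducible components $U$ of $Z$, and split the restricted varieties $h\cap U$ according to $\dim(h\cap U)$: if $\dim(h\cap U)/\dim U \le 1/2$ apply the inductive hypothesis in $U$ after introducing $q_{\dim(h\cap U),\dim U}$ (this is where the longer sequences of $\overline{S}_{k,d}$ are generated), and if $\dim(h\cap U)/\dim U > 1/2$ invoke the transversality argument of the previous paragraph to charge all $I^*$-incidences to the singular loci of $U, h, h', h\cap U, h'\cap U$ and recurse into those strictly-lower-dimensional subvarieties; (4) use the $K_{s,t}$-free hypothesis exactly as in \cite{FPSSZ17,SSS16} to bound the "few varieties per component" case and to close the induction; (5) collect terms, absorbing via Lemma \ref{le:ratio test} every subproblem whose dimension ratio dropped to $\le k/d$ into the main term $T_{k,d}(m,n)$, and verify that the exponents aggregate exactly into the displayed product over $\overline{S}_{k,d}$.

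The main obstacle is step (3) in the case $\dim(h\cap U)/\dim U > 1/2$: one must verify that after discarding singular-point incidences, what remains really does reduce to incidence problems of \emph{strictly} smaller dimension with transverse varieties (so the induction applies), and that no term indexed by a pair outside $\overline{R}_{k,d}$ survives. In particular one must be careful that (a) the restriction $h\cap U$ need not be irreducible, so one should pass to its irreducible components and track degrees; (b) transversality is inherited by the relevant restricted configurations, or else the singular-point charging argument must be applied component-by-component; and (c) the "singular locus" subproblems are themselves of the right form — lower-dimensional varieties in lower-dimensional ambient space with the $K_{s,t}$-free property preserved — so that the inductive hypothesis (with index set $\overline{S}$) is legitimately available. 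Once these points are handled, the arithmetic of combining exponents is identical to that in the proof of Theorem \ref{th:IncReal} and requires no new idea.
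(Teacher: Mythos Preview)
Your proposal is correct and follows essentially the same approach as the paper: induction on $d$ with a second induction on $m+n$, constant-degree polynomial partitioning, cell analysis identical to that of Theorem~\ref{th:IncReal} with $S_{k,d}$ replaced by $\overline{S}_{k,d}$, and for $k'>d'/2$ the transversality argument (at most one ``well-behaved'' variety through each regular point of $U$, contributing $O(\overline m)$) followed by iterated passage to the singular loci of $U$ and of each $h\cap U$. The paper resolves your flagged obstacle~(b) exactly as you anticipate might be necessary: it proves the more general statement with points on an irreducible variety $W\subseteq\RR^{d^*}$ (paralleling Theorem~\ref{th:IncRealGen}) and defines transversality with respect to the original varieties of $\vars$ in $\RR^{d^*}$ rather than their restrictions to $W$, so that shrinking $W$ during the recursion cannot destroy the hypothesis.
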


We note that Theorem \ref{th:RdTrans} implies Theorem \ref{th:UpperBounds}(a).
Indeed, each term in the bound of Theorem \ref{th:RdTrans} is bounded by $T_{k_u,d_u}(m,n)$, which is in turn bounded by $T_{1,2}(m,n)=m^{\frac{s}{2s-1}}n^{\frac{2s-2}{2s-1}}$ (by Lemma \ref{le:ratio test}).
For a proof of Theorem \ref{th:RdTrans}, see Section \ref{sec:Trans}.
This theorem is tight up to sub-polynomial factors when $s=2$ and $k=d/2$.

\parag{Incidences in $\CC^d$.}
By relying on the dimension ratio approach, we also prove a bound for incidences with hyperplanes in $\CC^d$.
When $s=2$, Theorem \ref{th:LowerMain} implies that this bound is tight up to sub-polynomial factors (the construction in $\RR^d$ can be placed in $\CC^d$ without any changes).
As far as we know, this is the first tight incidence bound in a complex space that is not for lines.

\begin{theorem} \label{th:Cd}
Let $s,t \ge 2$ be integers.
Let $\pts$ be a set of $m$ points and let $\vars$ be a set of $n$ hyperplanes, both in $\CC^d$.
Assume that the incidence graph of $\pts\times \vars$ contains no copy of $K_{s,t}$.
Then for any $\eps>0$ we have
\begin{align*}
I(\pts,\vars)=O\left( m^{\frac{(d-1)s}{ds-1}+\eps}n^{\frac{ds-d}{ds-1}}+ m + n \right).
\end{align*}
\end{theorem}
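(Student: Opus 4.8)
The plan is to translate the problem into a real incidence problem in $\RR^{2d}$ and then rerun the polynomial partitioning argument that underlies Theorem \ref{th:IncReal}, exploiting the rigidity of complex‑linear spaces to show that none of the auxiliary parameters $q_{k',d'}$ is actually needed. Identify $\CC^d$ with $\RR^{2d}$ in the usual way; a complex hyperplane becomes an affine flat of real dimension $2d-2$, and incidences are preserved. The dimension ratio of the resulting problem is $\tfrac{2d-2}{2d}=\tfrac{d-1}{d}$, and one checks directly that, writing $T_{2d-2,2d}(m,n)=m^\alpha n^\beta$, we have $\alpha=\tfrac{(d-1)s}{ds-1}$ and $\beta=\tfrac{ds-d}{ds-1}$, so $T_{2d-2,2d}(m,n)$ is exactly the bound we want. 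It therefore suffices to prove the conclusion of Theorem \ref{th:IncReal} for $(2d-2)$‑flats in $\RR^{2d}$ \emph{with all $q$-terms deleted}. I would argue by a double induction, on $m$ (with $s,t,\eps$ and the current degree bound fixed) and on the complex dimension $d$. The base case $d\le 2$ follows from Theorem \ref{th:UpperBounds}(a): two distinct complex lines in $\CC^2=\RR^4$ meet transversely, have real dimension $2=\tfrac42$, and consist only of regular points, so that theorem already yields the desired bound; the regime in which $n$ is below a suitable power of $m$ (and symmetrically) is handled directly using the absence of $K_{s,t}$.

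For the inductive step I would apply polynomial partitioning in $\RR^{2d}$ with a polynomial $f$ of degree $r=r(\eps)$, producing $O(r^{2d})$ cells each containing at most $m/r^{2d}$ points of $\pts$, where a $(2d-2)$‑flat meets only $O(r^{2d-2})$ of the cells and hence a typical cell meets $O(n/r^2)$ of the flats. Summing the inductive bound over the cells gives $O\!\big(r^{2d}\,T_{2d-2,2d}(m/r^{2d},\,n/r^2)\big)=O(T_{2d-2,2d}(m,n))$, since $2d\alpha+2\beta=2d$ (and $\alpha+s\beta=s$ keeps the residual cell terms below $m+n$); this part is routine. All the difficulty lies in bounding incidences with points of $\pts$ on $W:=Z(f)$, a real variety of dimension $2d-1$ and degree $O(r)$. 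Split $\vars$ into the flats not contained in $W$ and those contained in $W$. A flat $h\not\subseteq W$ meets $W$ in a set of real dimension at most $2d-3$, so the corresponding incidences live on $W$ with varieties of dimension ratio at most $\tfrac{2d-3}{2d-1}<\tfrac{d-1}{d}$; continuing the partitioning recursion as in Theorem \ref{th:IncReal}, every subproblem encountered is either the original problem with fewer points, or a problem whose varieties are intersections of our complex hyperplanes with bounded‑degree real varieties and whose dimension ratio has dropped below $\tfrac{d-1}{d}$ — so one needs to know that such intersections are rigid enough that the auxiliary parameters the proof of Theorem \ref{th:IncReal} would invoke are all $O_{r,d}(1)$, after which Lemma \ref{le:ratio test} absorbs the whole contribution into $T_{2d-2,2d}(m,n)+m+n$. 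For a flat $h\subseteq W$, the pair consisting of $\pts\cap h$ and the traces $\{h'\cap h:h'\in\vars\setminus\{h\}\}$ is itself an incidence problem between points and complex hyperplanes inside $h\cong\CC^{d-1}$, so it is controlled by the $\CC^{d-1}$ case of the theorem; what one must not do here is recurse on $W$ directly, since that only produces the weaker bound $T_{2d-2,2d-1}(m,n)$.

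The main obstacle, then, is a structural fact about complex hyperplanes contained in a bounded‑degree real variety $W$ of dimension $2d-1$: such a $W$ can contain \emph{many} of them — for instance the real hypersurface $\{z\in\CC^2:\operatorname{Im}(z_1\overline{z_2})=0\}$ contains the one‑parameter family $\{z_2=tz_1:t\in\RR\}$ of complex lines — and the bound must reflect that the incidences nonetheless stay small. The resolution I would pursue is a dichotomy: either $W$ contains only $O_{r,d}(1)$ complex hyperplanes, in which case those flats, together with $\pts$, form a bounded‑size incidence problem contributing only $O(m)$; or the complex hyperplanes in $W$ form a positive‑dimensional family, in which case a dimension count shows this family quasi‑foliates $W$, so a generic point of $W$ lies on only $O_{r,d}(1)$ of them and hence contributes $O(1)$ incidences, leaving $O(m)$ incidences from the foliated part, a lower‑dimensional exceptional locus to recurse on, and a bounded number of pencils, each of which — after intersecting with one of its members — reduces to the $\CC^{d-1}$ case. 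Making this dichotomy precise (ruling out an incidence‑rich family of complex hyperplanes inside a bounded‑degree real $(2d-1)$‑variety), tracking how the degree bound grows along the recursion, and checking that every term produced is dominated by $T_{2d-2,2d}(m,n)+m+n$ once $r$ is taken large enough to absorb the accumulated constants into $m^\eps$, is the technical heart of the proof.
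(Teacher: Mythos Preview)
Your overall scaffolding—pass to $\RR^{2d}$, constant-degree partition, handle cells by induction on $m+n$, outer induction on $d$—matches the paper. The genuine gap is in how you dispose of incidences on $W=\vb(f)$.

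For flats not contained in $W$, you assert that continuing the Theorem~\ref{th:IncReal} recursion works once one knows the auxiliary parameters $q_{k',d'}$ are all $O_{r,d}(1)$. This is false. Already for $d=3$ the real hyperplane $U=\{\operatorname{Im}z_3=0\}\subset\RR^6$ has degree $1$, dimension $5$, and contains every complex plane $\{z_3=c\}$ with $c\in\RR$, so $q_{4,5}$ can be as large as $n$; likewise every complex plane through a fixed complex line has $3$-dimensional intersection with suitable degree-$1$ four-dimensional varieties, so $q_{3,4}$ is unbounded as well. The $q$-terms in Theorem~\ref{th:IncReal} therefore cannot simply be dropped, and Lemma~\ref{le:ratio test} by itself does not close the argument.

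What the paper does instead is prove a pointwise linear-algebra fact specific to complex hyperplanes (Lemma~\ref{le:compFlats}): if a $d'$-dimensional real variety $U$ with $d<d'<2d$ meets several complex hyperplanes in $(d'-1)$-dimensional pieces, then at any common regular point $p$ those hyperplanes all contain a single common \emph{complex line} $\ell_p$. This converts the dangerous high-ratio case $(d'-1)/d'$ into a containment problem between the lines $\{\ell_p\}$ and the hyperplanes of $\vars$; slicing by a generic complex hyperplane turns that into a point--hyperplane incidence problem in $\CC^{d-1}$, handled by the induction on $d$. Your sketch never isolates this lemma and offers nothing to substitute for it.

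For flats contained in $W$, your foliation dichotomy gestures at the right phenomenon but misses the clean statement: if two distinct complex hyperplanes meet at $p$, their images in $\RR^{2d}$ together span all of $\RR^{2d}$, so if both lie in the $(2d-1)$-dimensional $W$ then $p$ must be singular on $W$. Hence every regular point of $W$ lies on at most \emph{one} complex hyperplane contained in $W$—no dichotomy, no family-counting, just transversality. The remaining incidences live on $W_{\text{sing}}$, which has strictly smaller dimension, and one recurses there.
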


A proof of Theorem \ref{th:Cd} can be found in Section \ref{sec:Complex}.
\vspace{2mm}

\parag{Acknowledgements.}
We would like to thank Larry Guth for several helpful discussions.

\section{Preliminaries}\label{sec:prelim}

We rely on the following variant of the Zarankiewicz problem (for example, see \cite[Section 4.5]{mat02}).

\begin{lemma}[K\H{o}v\'ari, S\'os and Tur\'an ] \label{le:KSTgen}
Let $\pts$ be a set of $m$ points in $\RR^d$ and let $\vars$ be a set of $n$ subsets of $\RR^d$.
If the incidence graph of $\pts\times\vars$ contains no copy of $K_{s,t}$, then
\[ I(\pts,\vars) = O_{s,t}\left(mn^{1-\frac{1}{s}}+n\right). \]
\end{lemma}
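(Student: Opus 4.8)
The plan is to observe that the statement is purely combinatorial — the geometry of $\RR^d$, and the fact that the members of $\vars$ are arbitrary subsets rather than, say, points, plays no role — and to establish the corresponding edge bound for $K_{s,t}$-free bipartite graphs via the classical double counting argument of K\H ov\'ari, S\'os and Tur\'an. So I would work directly with the incidence graph $G$ of $\pts\times\vars$, whose vertex classes are $\pts$ (of size $m$) and $\vars$ (of size $n$), and whose edges are the incidences.

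First, write $I:=I(\pts,\vars)$ for the number of edges of $G$, and for each $h\in\vars$ let $d_h$ be its degree in $G$, i.e. the number of points of $\pts$ incident to $h$, so that $I=\sum_{h\in\vars}d_h$. The absence of a $K_{s,t}$ in $G$ implies in particular that no $s$ points of $\pts$ are simultaneously incident to $t$ members of $\vars$; hence every $s$-element subset of $\pts$ is incident to at most $t-1$ members of $\vars$. Double counting the pairs $(\{p_1,\dots,p_s\},h)$ with $\{p_1,\dots,p_s\}\subseteq\pts$, $h\in\vars$, and all of $p_1,\dots,p_s$ incident to $h$, gives
\[ \sum_{h\in\vars}\binom{d_h}{s}\;\le\;(t-1)\binom{m}{s}. \]

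Next I would apply convexity. Extending $x\mapsto\binom{x}{s}$ to a convex, nonnegative function on $[0,\infty)$ (setting it to $0$ on $[0,s-1]$), Jensen's inequality gives $\sum_{h\in\vars}\binom{d_h}{s}\ge n\binom{I/n}{s}$, so $n\binom{I/n}{s}\le(t-1)\binom{m}{s}\le(t-1)m^s/s!$. Now split into two cases. If the average degree satisfies $I/n<2s$ then $I<2sn=O_s(n)$ and we are done. Otherwise $I/n\ge 2s$, so $I/n-s+1\ge I/(2n)$ and therefore $\binom{I/n}{s}\ge\big(I/(2n)\big)^s/s!$; plugging this in yields $I^s\le 2^s(t-1)\,m^s n^{s-1}$, i.e. $I\le 2(t-1)^{1/s}\,m\,n^{1-1/s}$. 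In both cases $I(\pts,\vars)=O_{s,t}\big(mn^{1-1/s}+n\big)$, as required.

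There is essentially no obstacle here; the only minor point is the elementary estimate comparing $\binom{d_h}{s}$ with $(d_h/s)^s$ — equivalently, the handling of varieties incident to fewer than about $s$ points, whose total contribution is exactly what the additive $O(n)$ term absorbs — together with the (standard) convexity of $\binom{x}{s}$ on $[s-1,\infty)$. I would present the argument in a few lines and, for the general graph-theoretic statement, cite a standard reference such as \cite[Section 4.5]{mat02}.
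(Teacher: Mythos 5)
Your argument is the classical K\H{o}v\'ari--S\'os--Tur\'an double counting (count pairs of an $s$-subset of $\pts$ and a common incident member of $\vars$, then apply convexity of $x\mapsto\binom{x}{s}$ with a case split on the average degree), and it is correct; the case $I/n<2s$ indeed produces exactly the additive $O_s(n)$ term. The paper itself offers no proof and simply cites \cite[Section 4.5]{mat02}, which contains essentially this same argument, so your write-up matches the intended route.
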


\parag{Varieties and partitioning.}
The variety defined by the polynomials $f_1,\ldots,f_k\in \RR[x_1,\ldots,x_d]$ is
\[ \vb(f_1,\ldots,f_k) = \left\{(a_1,\ldots,a_d)\in \RR^d  : f_j(a_1,\ldots,a_d)=0  \text{ for all }  1\le j \le k\right\}. \]

There are several non-equivalent definitions for the degree of a variety in $\RR^d$.
For our purposes, we define the degree of a variety $U\subset \RR^d$ as
\begin{equation} \label{eq:DegreeRd}
\min_{f_1,\ldots,f_k \in \RR[x_1,\ldots,x_d] \atop \vb(f_1,\ldots,f_k) = U} \max_{1 \le i \le k} \deg f_i.
\end{equation}
That is, the degree of $U$ is the minimum integer $D$ such that $U$ can be defined with a finite set of polynomials of degree at most $D$.

Intuitively, we say that a variety $U\subset \RR^d$ has dimension $k$ if there exists a subset of $U$ that is homeomorphic to the open $k$-dimensional cube, but no subset of $U$ is homeomorphic to an open cube of a larger dimension.
We refer to a $k$-dimensional variety of degree one (or a $k$-dimensional ``plane'') as a $k$-\emph{flat}.
For more information about varieties in $\RR^d$ and a more precise definition of dimension, see for example \cite{BCR98}.

We will use the following variant of the polynomial partitioning theorem.

\begin{theorem}[\cite{FPSSZ17}]\label{th:partition}
Let $\pts$ be a set of $m$ points in $\RR^d$ and let $U\subset\RR^d$ be an irreducible variety of degree $k$ and dimension $d'$. Then for every $1<r<m$ exists $f\in \RR[x_1,\ldots,x_d]$ of degree $O_{d,k}(r)$ such that $U \not\subseteq \vb(f)$ and every connected component of $U\setminus \vb(f)$ contains at most $m/r^{d'}$ points of $\pts$.
\end{theorem}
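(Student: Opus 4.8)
The plan is to deduce Theorem~\ref{th:partition} from the standard polynomial partitioning theorem in affine space $\RR^d$, by exploiting the fact that $U$ is irreducible of dimension $d'$ and degree $k$. The point is that locally (away from its singular locus) $U$ looks like a smooth $d'$-dimensional manifold, and on such pieces one can run the usual partitioning machinery in a $d'$-dimensional chart; the degree bound on $U$ controls how badly the local charts can fail to be global.

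First I would recall the ambient polynomial partitioning theorem of Guth and Katz: for a finite point set $\pts$ of size $m$ in $\RR^d$ and a parameter $r>1$, there is $g\in\RR[x_1,\dots,x_d]$ of degree $O_d(r)$ such that every connected component of $\RR^d\setminus\vb(g)$ contains at most $m/r^d$ points of $\pts$. I would then adapt this to the variety $U$ as follows. Pick a generic linear projection $\pi:\RR^d\to\RR^{d'}$. By the genericity of $\pi$ and the fact that $\dim U = d'$, the restriction $\pi|_U$ is finite-to-one, with the number of preimages of a generic point bounded by $\deg U = O_{d,k}(1)$; moreover $\pi|_U$ is a local diffeomorphism away from the ramification locus $Z\subset U$, which is a proper subvariety of $U$ of dimension $<d'$ and degree $O_{d,k}(1)$. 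Push the point set down: let $\pts' = \pi(\pts\cap U)$, a set of at most $m$ points in $\RR^{d'}$. Apply the Guth--Katz theorem in $\RR^{d'}$ to $\pts'$ and the parameter $r$, obtaining $h\in\RR[y_1,\dots,y_{d'}]$ of degree $O_{d'}(r)=O_{d,k}(r)$ whose complement has every cell containing at most $m/r^{d'}$ points of $\pts'$. Now set $f = (h\circ\pi)\cdot w$, where $w$ is a fixed polynomial of degree $O_{d,k}(1)$ cutting out the ramification locus $Z$ together with a $(d-1)$-dimensional slab that witnesses $U\not\subseteq\vb(f)$; then $\deg f = O_{d,k}(r)$. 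A connected component $C$ of $U\setminus\vb(f)$ misses $Z$, so $\pi|_C$ is a local diffeomorphism; since $C$ is connected and avoids $\pi^{-1}(\vb(h))$, its image $\pi(C)$ lies in a single connected component of $\RR^{d'}\setminus\vb(h)$, which contains at most $m/r^{d'}$ points of $\pts'$. As $\pi$ is injective on $U$ outside the (now excluded) locus where sheets collide --- or, more carefully, as each point of $\pts\cap C$ maps to a distinct point of $\pts'$ lying in that one cell --- we get $|\pts\cap C|\le m/r^{d'}$, which is exactly the claim. Finally, to guarantee $U\not\subseteq\vb(f)$ one only needs $\vb(f)\cap U\ne U$, which holds because $\vb(h)$ is a proper subvariety of $\RR^{d'}$ and $\pi(U)$ is $d'$-dimensional, so $\pi^{-1}(\vb(h))\cap U\ne U$, and similarly for the factor $w$.

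I expect the main obstacle to be the bookkeeping around the ramification locus and the injectivity of $\pi$ on $U$: a generic linear projection need not be injective on all of $U$, only finite-to-one, so two far-apart sheets of $U$ can project to the same cell of $\RR^{d'}\setminus\vb(h)$ and conceivably lie in the same connected component of $U\setminus\vb(f)$ after re-lifting. The cleanest fix is to replace the "single projection" argument with a more intrinsic one: either (a) cover $U$ by $O_{d,k}(1)$ Zariski-open charts on which $\pi$ (or a coordinate projection) is injective and smooth, add the boundary varieties of these charts into $f$ as extra $O_{d,k}(1)$-degree factors, and argue within each chart; or (b) invoke the refined version of the partitioning theorem on varieties as developed for the incidence-geometry setting (the same circle of ideas as in \cite{FPSSZ17,GK15}), where one directly partitions using a polynomial restricted to $U$ and bounds the number of connected components of $U\setminus\vb(f)$ via a Milnor--Thom / real algebraic geometry estimate of order $O_{d,k}(r^{d'})$. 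Route (b) is closer to the statement as phrased and avoids the injectivity subtlety entirely, at the cost of importing the real-algebraic connected-components bound; I would use (b) as the primary argument and mention (a) as the more elementary alternative for the reader who prefers to see only the affine Guth--Katz theorem invoked as a black box.
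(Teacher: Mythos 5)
The paper does not actually prove Theorem \ref{th:partition}: it is quoted from \cite{FPSSZ17}, where it is established intrinsically by running the Stone--Tukey ham-sandwich iteration inside the vector space of restrictions to $U$ of polynomials of degree at most $D$, whose dimension grows like $\Omega_{d,k}(D^{d'})$; that dimension count is precisely where the dependence on $k=\deg U$ in the degree bound enters. Your projection argument is a genuinely different and more elementary route, and it is correct --- in fact simpler than you make it. The only properties you need are that a generic $\pi$ is injective on the finite set $\pts$, and that a connected component $C$ of $U\setminus\vb(h\circ\pi)$ has connected image avoiding $\vb(h)$, hence lies over a single cell of $\RR^{d'}\setminus\vb(h)$; then the points of $\pts\cap C$ map injectively to points of $\pts'$ in that one cell, so $|\pts\cap C|\le m/r^{d'}$. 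Distinct sheets of $U$ colliding over the same cell can only make this an over-count, never an under-count, so the ramification locus, the factor $w$, and the ``slab'' are all unnecessary: take $f=h\circ\pi$, of degree $O_{d}(r)$ (no $k$-dependence even needed), and $U\not\subseteq\vb(f)$ because $\pi(U)$ contains a $d'$-dimensional open set (project generically relative to the tangent space at a smooth point of $U$), which cannot lie in the hypersurface $\vb(h)$.

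The one genuine flaw in the proposal as written is your decision to demote this argument and make route (b) primary. Route (b) amounts to invoking the partitioning-on-varieties theorem of \cite{FPSSZ17}, i.e., the very statement to be proved, so as a proof it is circular; and it misattributes the key point --- the bound of $m/r^{d'}$ points per cell does not come from a Milnor--Thom/Barone--Basu count of connected components (that count controls the \emph{number} of cells and is used elsewhere in the paper), but from the Hilbert-function dimension count in the ham-sandwich step. Route (a) is also shakier than it looks, since off the ramification locus a generic projection is only a local diffeomorphism and need not be injective on boundedly many Zariski-open charts; fortunately you need neither detour. Keep your direct projection argument as the proof, stated with $f=h\circ\pi$, and it is complete.
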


We will also require the following results about irreducible components and connected components of varieties in $\RR^d$.

\begin{lemma} \label{le:BoundedNumComponents}
Let $U\subset \RR^d$ be a variety of degree $k$.
Then the number of irreducible components of $U$ is $O_{d,k}(1)$.
\end{lemma}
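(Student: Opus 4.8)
The plan is to pass to the complex numbers and apply a Bézout-type bound. First I would cut down the number of defining polynomials. Writing $U=\vb(f_1,\ldots,f_m)$ with each $\deg f_i\le k$, the $\RR$-linear span of $f_1,\ldots,f_m$ is contained in the space of polynomials of degree at most $k$, which has dimension $\binom{d+k}{d}=O_{d,k}(1)$. Replacing $f_1,\ldots,f_m$ by a basis $g_1,\ldots,g_r$ of this span does not change the common zero set, and $r\le\binom{d+k}{d}$, so we may assume $m=O_{d,k}(1)$.

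Next, consider the complex variety $U_\CC=\vb_\CC(g_1,\ldots,g_r)\subseteq\CC^d$. Since the $g_j$ have real coefficients, a real point satisfies all $g_j=0$ over $\CC$ iff it does over $\RR$, so $U_\CC\cap\RR^d=U$. Now $U_\CC$ is an intersection of $r=O_{d,k}(1)$ complex hypersurfaces, each of degree at most $k$, so by the (refined) Bézout theorem the number of irreducible components of $U_\CC$ — indeed the sum of their degrees — is $O_{d,k}(1)$.

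It then remains to bound the number of $\RR$-irreducible components of $U$ by that of $U_\CC$. Writing $U=Z_1\cup\cdots\cup Z_N$ irredundantly and $U_\CC=W_1\cup\cdots\cup W_M$ with $M=O_{d,k}(1)$, one has $U=U_\CC\cap\RR^d=\bigcup_b(W_b\cap\RR^d)$, and since each $W_b\cap\RR^d$ is a real variety, irreducibility of $Z_i$ forces $Z_i\subseteq W_{b(i)}$ for some $b(i)$; the point is to make the assignment $i\mapsto b(i)$ injective. I expect this to be the main obstacle: the comparison between real and complex components is genuinely delicate (a single high‑degree real polynomial can have an irreducible complex zero set whose real part is reducible, so it matters that we work with defining polynomials realizing the minimal degree). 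The cleanest route is a Galois/conjugation descent — complex conjugation permutes the $W_b$, each $Z_i$ is conjugation‑invariant and equals the real points of a minimal conjugation‑closed union of $W_b$'s, and irredundancy yields injectivity — or, more simply, to invoke the standard quantitative result from real algebraic geometry (see, e.g., \cite{BCR98}) that a real variety in $\RR^d$ cut out by polynomials of degree at most $k$ has $O_{d,k}(1)$ irreducible components. An alternative self‑contained route is induction on the ambient dimension $d$: peel off the at most $k$ components of dimension $d-1$ (each governed by an irreducible factor of $\gcd(g_1,\ldots,g_r)$) and apply the inductive hypothesis to a bounded‑degree variety containing the remaining, lower‑dimensional components.
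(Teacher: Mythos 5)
The paper does not prove Lemma \ref{le:BoundedNumComponents}: it is stated as a background fact in Section \ref{sec:prelim}, with the end-of-section pointer to \cite{BCR98} as the reference. There is no in-paper argument to compare against, and your fallback of invoking the standard result from \cite{BCR98} is exactly what the paper does.

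Your attempted self-contained argument, however, has a genuine gap at precisely the step you flag, and neither of your proposed patches closes it. The assignment $i\mapsto b(i)$ sending a real irreducible component $Z_i$ to a complex irreducible component $W_{b(i)}\supseteq Z_i$ of $\vb_\CC(g_1,\ldots,g_r)$ need not be injective: an irreducible complex variety defined by real polynomials can contain several distinct real irreducible components in its real locus. Concretely, $f=x^2+y^2+(z^2-1)^2$ is irreducible in $\CC[x,y,z]$, so $\vb_\CC(f)$ is a single irreducible complex hypersurface, while $\vb(f)=\{(0,0,1),(0,0,-1)\}$ has two irreducible components. The Galois/conjugation descent cannot separate them: there is only one $W_b$, it is its own conjugate, and both $Z_i$ sit inside the same (trivially conjugation-closed) collection, so irredundancy yields nothing. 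You are right that the choice of defining polynomials matters --- here $\vb(f)$ actually has degree $2$, and switching to the generators $\{x,y,z^2-1\}$ makes $\vb_\CC$ of the generating set two points, restoring injectivity --- but you never prove that minimal-degree generators always produce an injective assignment, and that is exactly the delicate content of the lemma. The inductive route has an analogous hole: you do not identify a bounded-degree variety of strictly smaller dimension that contains the non-hypersurface components, so there is nothing to which the inductive hypothesis can be applied. Since none of the sketches is carried to completion, the appeal to \cite{BCR98} is the load-bearing step, and that matches the paper.
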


\begin{theorem}[Barone and Basu \cite{BB12}] \label{th:BaroneBasu}
Let $U$ and $W$ be varieties in $\RR^d$ such that $W$ is defined by a single polynomial of degree $k_W \ge 2\deg U$.
Then the number of connected components of $U\setminus W$ is $O_{d}\left(k_W^{\dim U}\deg U^{d-\dim U}\right)$.
\end{theorem}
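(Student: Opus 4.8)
This statement is essentially a special case of the refined Milnor--Thom-type bound of Barone and Basu \cite{BB12} on the number of connected components of the realizations of sign conditions of a polynomial on a variety, so in the paper one would simply cite \cite{BB12}; here is the plan I would follow to prove it. Write $g\in\RR[x_1,\dots,x_d]$ for a polynomial of degree $k_W$ with $W=\vb(g)$, and put $\delta=\deg U$. Since $U\setminus W=(\{x\in U:g(x)>0\})\cup(\{x\in U:g(x)<0\})$, it suffices to bound the number of connected components of a single sign condition, say $\{x\in U:g(x)>0\}$, by $O_d(k_W^{\dim U}\delta^{d-\dim U})$; that is exactly the refined Barone--Basu bound for one sign condition on a variety of real dimension $\dim U$ and degree $\delta$.

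The first step is to reduce to a smooth, bounded setting in which Morse theory applies. I would replace the defining polynomials $f_1,\dots,f_\ell$ of $U$ (each of degree at most $\delta$) by the single polynomial $Q=\sum_i f_i^2$, so that $U=\vb(Q)$ with $\deg Q\le 2\delta\le k_W$; this is the one place where the hypothesis $k_W\ge 2\deg U$ is used, guaranteeing that the variety, even after being described by a single equation, never costs more than $k_W$ in degree. Working over a real closed extension with positive infinitesimals $\eps,\eps'$, I would then pass to a bounded nonsingular variety $\tilde U$ of dimension $\dim U$ and degree $O(\delta)$ (an infinitesimal deformation of $\vb(Q)$ intersected with a large ball) such that every connected component of $\{g>0\}\cap U$ gives rise to at least one connected component of $M:=\{x\in\tilde U:g(x)\ge\eps'\}$, which is now a compact smooth manifold with boundary. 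This is standard but delicate semialgebraic bookkeeping.

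Next I would run the critical-point argument together with an induction on $\dim U$. For a generic linear function $\ell$, every connected component of $M$ contains a point that is a local minimum of $\ell|_M$, and such a point is either a critical point of $\ell|_{\tilde U}$ lying in $\{g>\eps'\}$, or else a point of the variety $\tilde U\cap\{g=\eps'\}$, whose dimension is $\dim U-1$. The second case is handled by the inductive hypothesis. The crux is to bound the relevant critical loci by B\'ezout-type estimates while keeping the two degree parameters in their correct roles: each reduction in ambient dimension is effected by a generic linear projection and is charged one factor of $\delta$ coming from the equation of $U$, for a total budget of $\delta^{d-\dim U}$, while the sign condition contributed by $g$ together with its infinitesimal perturbations accumulates the classical factor $O_d(k_W^{\dim U})$ appropriate to a $(\dim U)$-dimensional base. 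Telescoping these counts gives $O_d(k_W^{\dim U}\delta^{d-\dim U})$; as a sanity check, when $\dim U=0$ this is the B\'ezout bound $|U|=O_d(\delta^d)$, and when $\dim U=d$ it is the usual Milnor--Thom bound $O_d(k_W^d)$.

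The main obstacle is exactly this asymmetric bookkeeping. A naive induction that at each step replaces $\tilde U$ by $\tilde U\cap\{g=\eps'\}$ would multiply the degree of the variety by $k_W$ at every level, ballooning it to $\delta k_W^{\dim U}$ and destroying the bound; one must instead route the sign-condition complexity through a base of fixed dimension, keeping the variety's degree essentially at $O(\delta)$ and letting only the polynomial degree $k_W$ build up. A secondary difficulty is making the smoothing-and-compactification step fully rigorous over a real closed field with infinitesimals without merging or losing components. Both are carried out in \cite{BB12}, which is the reference to invoke.
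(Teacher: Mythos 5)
The paper does not prove this statement at all---it is quoted directly from Barone and Basu \cite{BB12}---so citing that reference, as you do, is exactly the paper's ``proof.'' Your accompanying sketch (reduction to a single sign condition, infinitesimal deformation to a bounded nonsingular setting, critical points of a generic linear functional with induction on $\dim U$, and the asymmetric degree bookkeeping that keeps the variety's degree at $O(\deg U)$ while only $k_W$ accumulates $\dim U$ times) is a fair outline of the actual Barone--Basu argument, so the proposal is consistent with the paper's approach.
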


\parag{Singular points, regular points, and tangent flats.}
The \emph{ideal} of a variety $U\subseteq \RR^d$, denoted $\ib(U)$, is the set of polynomials in $\RR[x_1,\ldots,x_d]$ that vanish on every point of $U$.
We say that a set of polynomials $f_1,\ldots,f_\ell \in \RR[x_1,\ldots,x_d]$ \emph{generate} $\ib(U)$ if every element of $\ib(U)$ can be written as $\sum_{j=1}^\ell f_j g_j$ for some $g_1,\ldots,g_\ell \in \RR[x_1,\ldots,x_d]$.
We also write $\langle f_1,\ldots,f_\ell \rangle = \ib(U)$ to state that $f_1,\ldots,f_\ell$ generate $\ib(U)$.

The \emph{Jacobian matrix} of a set of polynomials $f_1,\ldots,f_k \in \RR[x_1,\ldots,x_d]$ is

\[ \jb_{f_1,\ldots,f_k} = \left( \begin{array}{cccc}
\frac{\partial f_1}{\partial x_1} & \frac{\partial f_1}{\partial x_2} & \cdots & \frac{\partial f_1}{\partial x_d} \\[2mm]
\frac{\partial f_2}{\partial x_1} & \frac{\partial f_2}{\partial x_2} & \cdots & \frac{\partial f_2}{\partial x_d} \\[2mm]
\cdots & \cdots & \cdots & \cdots \\[2mm]
\frac{\partial f_k}{\partial x_1} & \frac{\partial f_k}{\partial x_2} & \cdots & \frac{\partial f_k}{\partial x_d} \end{array}\right)\]

Consider a variety $U\subset\RR^d$ of dimension $k$, and let $f_1,\ldots,f_\ell \in \RR[x_1,\ldots,x_d]$ satisfy $\langle f_1,\ldots,f_\ell \rangle = \ib(U)$.
We say that $p\in U$ is a \emph{singular} points of $U$ if $\rank \jb(p) <d-k$.
We denote the set of singular points of $U$ as $U_\text{sing}$.
A point of $U$ that is not singular is said to be a \emph{regular} point of $U$.
A $k$-dimensional variety has a unique well-defined tangent $k$-flat at every regular point.
We denote the tangent $k$-flat at $p\in U$ as $T_p U$, and think of it as a linear subspace (that is, as incident to the origin).
At singular points of a variety, a unique well-defined tangent flat may or may not exist.

\begin{theorem} \label{th:Singular}
Let $U\subset\RR^d$ be a variety of degree $D$ and dimension $k$.
Then $U_\text{sing}$ is a variety of dimension smaller than $k$ and of degree $O_{D,d}(1)$.
\end{theorem}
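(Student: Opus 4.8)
The plan is to exhibit $U_\text{sing}$ explicitly as a common zero set of $O_{D,d}(1)$ polynomials of degree $O_{D,d}(1)$ — which settles the degree statement — and then to bound its dimension by localizing to irreducible components. For the degree bound, recall that by definition $U_\text{sing}$ consists of the points of $U$ at which the Jacobian matrix of a generating set $f_1,\ldots,f_\ell$ of $\ib(U)$ has rank less than $d-k$; equivalently, $U_\text{sing}$ is the intersection of $U$ with the zero set of all $(d-k)\times(d-k)$ minors of $\jb_{f_1,\ldots,f_\ell}$, so in particular it is a variety and it does not depend on the choice of generators. I would then invoke the standard effective bounds of real algebraic geometry (the real Nullstellensatz together with degree bounds on the real radical; see e.g.\ \cite{BCR98}): the ideal $\ib(U)$ of a degree-$D$ variety in $\RR^d$ admits a generating set consisting of $O_{D,d}(1)$ polynomials, each of degree $O_{D,d}(1)$. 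With such generators every relevant minor has degree $O_{D,d}(1)$ and there are $O_{D,d}(1)$ of them; adjoining a defining set for $U$ of polynomials of degree at most $D$ then presents $U_\text{sing}$ as a common zero set of $O_{D,d}(1)$ polynomials of degree $O_{D,d}(1)$, so $\deg U_\text{sing}=O_{D,d}(1)$.

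For the dimension bound, write $U=U_1\cup\cdots\cup U_r$ for the decomposition into irreducible components (finitely many — in fact $r=O_{D,d}(1)$ by Lemma~\ref{le:BoundedNumComponents}, though only finiteness is used here). The key claim is the inclusion
\[ U_\text{sing} \subseteq \bigcup_{1\le i<j\le r}(U_i\cap U_j) \cup \bigcup_{i\,:\,\dim U_i<k}U_i \cup \bigcup_{i\,:\,\dim U_i=k}(U_i)_\text{sing}. \]
Granting it, each piece on the right has dimension $<k$: a pairwise intersection of distinct irreducible components is a proper closed subvariety of each, hence of dimension $<k$; the singular locus of an irreducible variety of dimension $k$ is a proper closed subvariety, hence of dimension $<k$ (see e.g.\ \cite{BCR98}); and components of dimension $<k$ are trivially of dimension $<k$. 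A finite union of such sets has dimension $<k$, giving $\dim U_\text{sing}<k$. To prove the inclusion, fix $p\in U_\text{sing}$. If $p$ lies on two distinct components it lies in some $U_i\cap U_j$. Otherwise $p$ lies on a unique component $U_i$; for each $j\ne i$ pick $g_j\in\ib(U_j)$ with $g_j(p)\ne0$ and set $g=\prod_{j\ne i}g_j$, so $g(p)\ne0$ and $hg\in\ib(U)$ for every $h\in\ib(U_i)$. Since $h(p)=0$ we get $\nabla(hg)(p)=g(p)\,\nabla h(p)$, so the gradient at $p$ of every element of $\ib(U_i)$ lies in the row span of the Jacobian of a generating set of $\ib(U)$ at $p$; together with $\ib(U)\subseteq\ib(U_i)$ this shows that the Jacobians of $\ib(U)$ and of $\ib(U_i)$ have the same rank at $p$. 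Consequently, if $\dim U_i=k$ then $p\in U_\text{sing}$ says exactly that $p$ is a singular point of $U_i$, and if $\dim U_i<k$ then $p\in U_i$ — in either case $p$ lies on the right-hand side.

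The step that carries the real content is the effective estimate bounding the number and degree of generators of $\ib(U)$ in terms of $D$ and $d$, which is what drives the degree estimate; everything else is routine. The only mild subtlety in the dimension argument is that the notion of singularity here is the global one (rank of the Jacobian versus $\dim U$ rather than versus the local dimension), so the reduction to a single component requires the multiplication-by-a-non-vanishing-polynomial trick above in order to compare the Jacobian rank of $\ib(U)$ with that of a component.
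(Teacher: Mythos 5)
The paper does not actually prove Theorem~\ref{th:Singular}: it is listed among standard facts in Section~\ref{sec:prelim} and the reader is simply pointed to \cite{BCR98} (``References for the above claims \ldots\ can be found, for example, in \cite{BCR98}''). There is therefore no internal proof to compare against. Your proposal is nonetheless a sound reconstruction of how such a proof goes, and you identify the right points of tension. The degree bound via presenting $U_\text{sing}$ as $U$ intersected with the vanishing of all $(d-k)\times(d-k)$ minors of $\jb_{f_1,\ldots,f_\ell}$ for generators $f_1,\ldots,f_\ell$ of $\ib(U)$ is the standard move, and your reduction of the dimension bound to the irreducible case is correct: the multiplication-by-a-non-vanishing-factor trick does show that on a point $p$ lying on a unique component $U_i$ the row span of the Jacobian of $\ib(U)$ at $p$ equals that of $\ib(U_i)$, and you correctly note that the paper's singular locus compares Jacobian rank against the \emph{global} dimension $k=\dim U$ rather than the local one, which is exactly why the reduction needs that comparison of row spans and why the low-dimensional components $U_i$ (where regular points of $U_i$ are \emph{not} singular points of $U$ under the paper's convention) must be swept into the union wholesale.

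The one caveat worth flagging is the step you yourself identify as carrying the real content: that $\ib(U)$ for a degree-$D$ variety $U\subset\RR^d$ admits a generating set of $O_{D,d}(1)$ polynomials of degree $O_{D,d}(1)$. This is an effective real-Nullstellensatz/real-radical degree bound, and I do not believe it actually appears in \cite{BCR98}, which is a non-effective textbook; you would need to point to the quantitative real algebraic geometry literature (effective Positivstellensatz, algorithmic computation of real radicals, or a quantifier-elimination argument with Basu--Pollack--Roy style degree bounds). Such bounds exist, so the argument is not wrong, but the cited source does not carry it. Of course, the paper itself waves at \cite{BCR98} for the same theorem, so your citation practice is no looser than the authors'; it is just worth knowing where the actual difficulty lives.
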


References for the above claims and additional information can be found, for example, in \cite{BCR98}.

\section{Upper bounds in $\RR^d$} \label{sec:Rd}

The purpose of this section is to prove Theorem \ref{th:IncReal} --- our general incidence bound in $\RR^d$.
We begin by proving Lemma \ref{le:ratio test}, and first repeat the statement of this lemma.
\vspace{2mm}

\noindent {\bf Lemma \ref{le:ratio test}} (Dimension ratio lemma).
\emph{Consider positive integers $n,m,s,k,d,k',d'$, such that $n=O(m^s)$, $s>1$, and $\frac{k'}{d'}\le \frac{k}{d} < 1$.
Then} $T_{k',d'}(m,n) = O\left(T_{k,d}(m,n)\right)$.

\begin{proof}
By the assumptions, both $s-1$ and $d'k-dk'$ are positive.
We may thus raise both sides of $n=O(m^s)$ to the power of $(d'k-dk')(s-1)$, obtaining
\[ n^{(d'k-dk')(s-1)} = O\left(m^{s(d'k-dk')(s-1)}\right). \]
This implies
\begin{align*}
m^{sk'(ds-d+k)}n^{(d's-d')(ds-d+k)} &= m^{sk'(ds-d+k)}n^{(d'k-dk')(s-1)}n^{(ds-d)(d's-d'+k')}\\[2mm]
&= O\left(m^{sk'(ds-d+k)}m^{s(d'k-dk')(s-1)}n^{(ds-d)(d's-d'+k')}\right)\\[2mm]
&=O\left(m^{sk(d's-d'+k')}n^{(ds-d)(d's-d'+k')}\right).
\end{align*}
Finally, raising both sides to the power of $1/(ds-d+k)(d's-d'+k')$ yields the assertion of the lemma.
\end{proof}

Instead of proving Theorem \ref{th:IncReal}, we prove the following more general result, where the points are contained in a constant-degree variety $W\subset \RR^{d^*}$.
Theorem \ref{th:IncReal} is immediately obtained by setting $W=\RR^d$ and $d^*=d$ in Theorem \ref{th:IncRealGen}.

\begin{theorem} \label{th:IncRealGen}
Let $k,d,d^*, s,t,D_W$ be positive integers with $s\geq 2$ and $k<d$.
For any $\eps>0$, there exists a constant $c$ such that the following holds.
Let $\pts$ be a set of $m$ points on an irreducible $d$-dimensional variety $W\subseteq\RR^{d^*}$ of degree $D_W$ and let $\vars$ be a set of $n$ irreducible varieties of dimension at most $k$ and degree at most $D$ in $\RR^{d^*}$.
Assume that the incidence graph of $\pts\times \vars$ contains no copy of $K_{s,t}$.
Moreover, for each pair $(k',d')\in R_{k,d}$, every $d'$-dimensional variety of degree at most $c$ has a $k'$-dimensional intersection with at most $q_{k',d'}$ varieties of $\vars$.
Then
\begin{align*}
I(\pts,\vars)=O\Bigg( \sum_{( (k_0,d_0), \ldots, (k_u, d_u) ) \in S_{k,d}} \hspace{-13mm}
m^{\frac{sk_u}{sd_u-d_u+k_u}+\eps}\Bigg(n^{\frac{d}{d-k}}q_{k_1,d_1}^{\frac{d_1}{d_1-k_1}-\frac{d}{d-k}}\cdots &q_{k_u,d_u}^{\frac{d_u}{d_u-k_u}-\frac{d_{u-1}}{d_{u-1}-k_{u-1}}}\Bigg)^{\frac{(d_u-k_u)(s-1)}{sd_u-d_u+k_u}} \\
&\hspace{21mm}+ m + n \Bigg).
\end{align*}
\end{theorem}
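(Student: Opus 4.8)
\medskip
\noindent\textbf{Proof plan.} The plan is to prove Theorem~\ref{th:IncRealGen} by a double induction: an outer induction on the dimension $d$ of the ambient variety $W$, and, for each fixed $d$, an inner induction on $m=|\pts|$. Throughout we may assume $n\le m^s$, since otherwise $m\le n^{1/s}$ and Lemma~\ref{le:KSTgen} gives $I(\pts,\vars)=O(mn^{1-1/s}+n)=O(n)$, absorbed by the $+n$ term. We fix a large constant $r=r(\eps,k,d,s,t)$ and then a constant $c=c(\eps,k,d,s,t,D,D_W)$ larger than every bounded degree produced below; the base cases $d=2$ and $m=O(1)$ follow directly from Lemma~\ref{le:KSTgen}. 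In the inductive step I would apply Theorem~\ref{th:partition} to $\pts$ on $W$ with parameter $r$, obtaining $f$ of degree $O(r)$ with $W\not\subseteq\vb(f)$, and split $I(\pts,\vars)$ into the incidences inside the open cells of $W\setminus\vb(f)$ and the incidences with points of $\pts\cap\vb(f)$.

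For the cell part, Theorem~\ref{th:BaroneBasu} shows there are $O(r^d)$ cells, each with at most $m/r^d$ points, and (applied to $V\cap W$, which has dimension $\le k$ and bounded degree) that every $V\in\vars$ with $V\cap W\not\subseteq\vb(f)$ has incidences in only $O(r^k)$ cells, while varieties with $V\cap W\subseteq\vb(f)$ contribute nothing here. Hence, if $\pts_\Omega,\vars_\Omega$ denote the points and relevant varieties of a cell $\Omega$, then $\sum_\Omega|\vars_\Omega|=O(nr^k)$; applying the inner induction hypothesis in each cell and summing by Hölder's inequality, the key point is that for every significant sequence the exponents of the corresponding term satisfy the identity $d\alpha+(d-k)\theta=d$ (with $\alpha$ the power of $m$, apart from the additive $\eps$, and $\theta$ the power of $n$), so all powers of $r$ cancel and each term's cell contribution is $O(r^{-d\eps})$ times that term. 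Taking $r$ large makes the whole cell contribution at most half of the bound of Theorem~\ref{th:IncRealGen} --- this is exactly the role of the $m^\eps$ factors. Cells with $|\vars_\Omega|>|\pts_\Omega|^s$ contribute only $O(\sum_\Omega|\vars_\Omega|)=O(n)$ by Lemma~\ref{le:KSTgen}.

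For the points on $\vb(f)$, note $W\cap\vb(f)$ has bounded degree, so by Lemma~\ref{le:BoundedNumComponents} it has $O(1)$ irreducible components, each of dimension $d_Y\le d-1$ (as $W$ is irreducible and $W\not\subseteq\vb(f)$) and bounded degree. Fixing such a component $Y$, I would partition $\vars$ according to $j:=\dim(V\cap Y)$. If $Y\subseteq V$, every point of $\pts\cap Y$ is incident to $V$, so a $K_{s,t}$ forces either fewer than $t$ such points or fewer than $s$ such varieties, and these incidences total $O(m+n)$. If $j=0$, then $V\cap Y$ is finite of bounded size and the total is $O(n)$. If $1\le j<d_Y$ and $\tfrac{j}{d_Y}\le\tfrac{k}{d}$ (the ``safe'' case), I would pass to the irreducible components of the sets $V\cap Y$ --- these have bounded degree, dimension $\le j$, and a $K_{s,t'}$-free incidence graph for a constant $t'$ --- and invoke the outer induction hypothesis for the $(j,d_Y)$-problem on $Y$; using $q'_{k',d'}\le n$ for sub-parameters of ratio $\le k/d$ and $q'_{k',d'}\le q_{k',d'}$ for those of ratio $>k/d$ (which lie in $R_{k,d}$), Lemma~\ref{le:ratio test} shows every resulting term is $O(T_{k,d}(m,n)m^\eps)$ or is dominated by an existing term of Theorem~\ref{th:IncRealGen}. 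If $1\le j<d_Y$ and $\tfrac{j}{d_Y}>\tfrac{k}{d}$ (the ``dangerous'' case), then $(j,d_Y)\in R_{k,d}$, so at most $q_{j,d_Y}$ varieties fall here; invoking the outer induction hypothesis for the $(j,d_Y)$-problem on $Y$ with $O(q_{j,d_Y})$ varieties, and observing that all of its sub-parameters now have ratio $>k/d$ (hence lie in $R_{k,d}$ and are bounded by the corresponding $q_{k',d'}$), every resulting term is $O$ of the term of Theorem~\ref{th:IncRealGen} indexed by the sequence obtained by prepending $(k,d)$ --- here $q_{j,d_Y}\le n$ is used to match exponents exactly. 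Summing over the $O(1)$ components $Y$, the entire $\vb(f)$-part is $O$ of the claimed bound with a constant depending only on the (already-established) outer hypotheses in dimensions $<d$.

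Combining the cell part (at most half the claimed bound) with the $\vb(f)$-part (a fixed constant times the claimed bound, independent of the inner induction constant) and the various $O(m+n)$ contributions, one closes the inner induction by taking its constant large enough relative to that fixed constant. I expect the main obstacle to be the bookkeeping of step three: verifying the exponent identity $d\alpha+(d-k)\theta=d$ for \emph{every} significant sequence (so the $r$-powers cancel in the cell sum), and checking that each subproblem on $\vb(f)$ produces only terms already present in the bound --- which requires tracking how the sub-parameters $q'_{k',d'}$ are controlled by the original $q_{k',d'}$ or trivially by $n$, how the degree bounds and the $K_{s,t}$-parameter degrade by $O(1)$ at each recursive level (so a single $c$ suffices), and that the telescoping exponents $\tfrac{d_i}{d_i-k_i}-\tfrac{d_{i-1}}{d_{i-1}-k_{i-1}}$ are non-negative and combine correctly. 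The remaining ingredients are standard polynomial-partitioning machinery.
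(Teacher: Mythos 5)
Your proposal follows the paper's own proof almost verbatim: a double induction (outer on $d$, inner on the point count), partitioning $W$ by Theorem~\ref{th:partition}, H\"older's inequality over the cells combined with the exponent identity $d\alpha+(d-k)\theta=d$ so that the $r$-powers cancel leaving $r^{-d\eps}$, and then a component-by-component analysis of $\pts\cap\vb(f)\cap W$. Your ``safe''/``dangerous'' dichotomy according to whether $j/d_Y\le k/d$ or $j/d_Y>k/d$ is precisely what drives the paper's argument after \eqref{eq:GenTerm}: in the first case all the sub-parameters are trivially $\le n$ and Lemma~\ref{le:ratio test} absorbs the term into $T_{k,d}(m,n)$; in the second, one telescopes using $\overline q\le n$ on the low-ratio prefix, bounds the tail by the original $q_{k',d'}$'s, and recognizes the prepended sequence $((k,d),(j,d_Y),(k_1,d_1),\dots,(k_u,d_u))$ as an element of $S_{k,d}$. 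The observations that passing to irreducible components of $V\cap Y$ keeps the graph $K_{s,t'}$-free for some $t'=O_{D,d}(t)$, and that a single $c$ can be chosen at the outset, are also present in the paper.

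The one concrete error is your treatment of the outer base case. The case $d=2$ does \emph{not} follow directly from Lemma~\ref{le:KSTgen}: KST gives $O(mn^{1-1/s}+n)$, which is strictly worse than the target $T_{1,2}(m,n)=m^{\frac{s}{2s-1}}n^{\frac{2s-2}{2s-1}}$; for instance with $m\approx n$ and $s=2$ one gets $n^{3/2}$ versus the required $n^{4/3}$. The paper instead projects $\pts$ and the curves of $\vars$ generically onto a $2$-plane (no new incidences are created, degrees remain bounded) and then invokes Theorem~\ref{th:UpperBounds}(b) as a black box. Alternatively you could run your inner induction at $d=2$ as well --- there the irreducible components of $\vb(f)\cap W$ have dimension $0$ or $1$, so the on-partition analysis degenerates to the ``$Y\subseteq V$'' and ``finite intersection'' cases --- but then $d=2$ is not a base case that ``follows directly from KST''; it is another instance of the full partitioning recursion. (The inner base $m=O(1)$, by contrast, really does follow trivially, even without KST, since then $I\le mn=O(n)$.) Aside from this one sentence, the plan is sound and matches the paper's proof.
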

\begin{proof}
We prove the statement of the theorem by induction on $d$.
In particular, we prove that
\begin{align}
I(\pts,\vars) \le \alpha_{1} \cdot \hspace{-13mm}\sum_{( (k_0,d_0), \ldots, (k_u, d_u) ) \in S_{k,d}} \hspace{-13mm}
m^{\frac{sk_u}{sd_u-d_u+k_u}+\eps}\Bigg(n^{\frac{d}{d-k}}q_{k_1,d_1}^{\frac{d_1}{d_1-k_1}-\frac{d}{d-k}}\cdots &q_{k_u,d_u}^{\frac{d_u}{d_u-k_u}-\frac{d_{u-1}}{d_{u-1}-k_{u-1}}}\Bigg)^{\frac{(d_u-k_u)(s-1)}{sd_u-d_u+k_u}} \nonumber\\ &\hspace{26mm}+\alpha_{2}(m+n), \label{eq:FirstInd}
\end{align}
where $\alpha_{1},\alpha_{2}$ are sufficiently large constants that depend on $s,t,c,d,d^*,D_W,$ and $\eps$.

For the induction basis, we consider the case where $d=2$.
In this case $R_{1,2}$ is empty and $S_{1,2}=\{(k,d)\}$.
Let $h$ be a generic two-dimensional plane in $\RR^{d^*}$, and let $\pi_2:\RR^{d^*}\to h$ be the standard projection.
Set $\pts_2 = \{\pi_2(p) \ :\ p\in \pts\}$ and let $\curves$ be the set of projections of the curves of $\vars$.
Since $h$ is chosen generically, we may assume that no two points of $\pts$ are projected to the same point of $h$, that no new incidences were created due to the projection, and that every element of $\curves$ is a constant-degree variety of dimension one.
This implies that the incidence graph of $\pts_2\times \curves$ contains no copy of $K_{s,t}$ and that $I(\pts,\vars)=I(\pts_2,\curves)$.
The theorem is then obtained by applying Theorem \ref{th:UpperBounds}(b) to $\pts_2$ and $\curves$.

We prove the induction step using a second induction on $m+n$.
For the base case of this second induction, when $m$ and $n$ are sufficiently small the result is obtained by choosing sufficiently large values of $\alpha_{1}$ and $\alpha_{2}$.
It remains to handle the induction step of the second induction.
The hidden constants in the $O(\cdot)$-notations throughout the proof may also depend on $s,t,D,d,d^*,D_W$ and $\eps$.
For brevity we write $O(\cdot)$ instead of $O_{s,t,D,d,d^*,\deg W,\eps}(\cdot)$.

Since the incidence graph contains no copy of $K_{s,t}$, Lemma \ref{le:KSTgen} implies $I(\pts,\vars) = O(mn^{1-1/s}+n)$.
When $m=O(n^{1/s})$, this implies $I(\pts,\vars) = O(n)$.
We may thus assume that
\begin{equation} \label{eq:kstAssum}
n=O(m^s).
\end{equation}

We now present a brief outline of the rest of the proof.
We use a bounded-degree partitioning polynomial $f$ to partition $W$ into cells, each containing a bounded number of points of $\pts$.
We apply the second induction hypothesis separately in each cell, and sum these bounds in a standard way.
To handle incidences on the partition $Z(f)\cap W$,  we separately consider each irreducible component of this intersection.
Let $W'$ be such an irreducible component of dimension $d'$.
For every $1\le k' \le k$, we separately consider the varieties of $\vars$ that have a $k'$-dimensional intersection with $W'$.
By applying the first induction hypothesis, we obtain a bound for the number of incidences with a term corresponding to every significant sequence of $S_{k',d'}$.
We extend each such sequence to a significant sequence  of $S_{k,d}$.
For example, when $\frac{k}{d}<\frac{k'}{d'}$, we simply add $(k,d)$ to the beginning of the sequence.
By carefully handling the various types of sequences of $S_{k',d'}$, we always obtain terms that are part of the bound of the induction, which completes the induction step.
%: $((k_0,d_0),(k_1,d_1),\dots, (k_u,d_u)$ where $(k_0,d_0)=(k',d')$. Let $j$ be the smallest integer such that $\frac{k}{d}<\frac{k_j}{d_j}$, we obtain a significant sequence in $S_{k,d}$: $((k,d),(k_{j+1},d_{j+1}),\dots, (k_u,d_u))$ (if no such $j$ exists then the sequence is simply $((k,d))$.

\parag{Partitioning the space.}
By Theorem \ref{th:partition} with $U=W$, $d'=d$, and a constant $r$, we obtain a polynomial $f\in \RR[x_1,\ldots,x_{d^*}]$ of degree $O(r)$ such that every connected component of $W\setminus \vb(f)$ contains at most $m/r^d$ points of $\pts$.
The asymptotic relations between the various constants in the proof are
\[ 2^{1/\eps} \ll r \ll \alpha_{2} \ll \alpha_{1} \quad \text{ and } \quad k,d,d^*, s,t,D,D_W,2^{1/\eps} \ll c.\]

Denote the cells of the partition as $C_1, \ldots, C_v$.
By Theorem \ref{th:BaroneBasu}, we have that $v=O(r^{d})$.
For each $1\le j \le v$, denote by $\vars_j$ the set of varieties of $\vars$ that intersect $C_j$, and set $\pts_j = C_j \cap \pts$.
We also set $m_j=|\pts_j|$, $m' = \sum_{j=1}^c m_j$, and $n_j=|\vars_j|$.
Note that $m_j\le m/r^{d}$ for every $1\le j \le v$.
By Theorem \ref{th:BaroneBasu}, every variety of $\vars$ intersects $O(r^k)$ cells of $W \setminus \vb(f)$.
Therefore, $\sum_{j=1}^v n_j = O(nr^k)$.
For every $(k_u,d_u)\in R_{k,d}$, H\"older's inequality implies
\begin{align*}
\sum_{j=1}^v n_j^{\frac{d(d_u-k_u)(s-1)}{(d-k)(sd_u-d_u+k_u)}}&\le \left(\sum_{j=1}^v n_j\right)^{\frac{d(d_u-k_u)(s-1)}{(d-k)(sd_u-d_u+k_u)}} \left(\sum_{j=1}^v 1\right)^{\frac{dk_us-d_uks+kd_u-kk_u}{(d-k)(sd_u-d_u+k_u)}} \\[2mm]
&= O\left(\left(nr^k\right)^{\frac{d(d_u-k_u)(s-1)}{(d-k)(sd_u-d_u+k_u)}} r^{\frac{d(dk_us-d_uks+kd_u-kk_u)}{(d-k)(sd_u-d_u+k_u)}}\right) \\[2mm]
&=O\left(n^{\frac{d(d_u-k_u)(s-1)}{(d-k)(sd_u-d_u+k_u)}} r^{\frac{dk_us}{sd_u-d_u+k_u}}\right).
\end{align*}

Combining the above with the induction hypothesis implies
\begin{align*}
\sum_{j=1}^v &I(\pts_j,\vars_j) \\
&\le \sum_{j=1}^v \Bigg(\alpha_{1} \cdot \hspace{-6mm}\sum_{( (k_0,d_0), \ldots, (k_u, d_u) ) \in S_{k,d}} \hspace{-13mm}
m_j^{\frac{sk_u}{sd_u-d_u+k_u}+\eps}\Bigg(n_j^{\frac{d}{d-k}}q_{k_1,d_1}^{\frac{d_1}{d_1-k_1}-\frac{d}{d-k}}\cdots q_{k_u,d_u}^{\frac{d_u}{d_u-k_u}-\frac{d_{u-1}}{d_{u-1}-k_{u-1}}}\Bigg)^{\frac{(d_u-k_u)(s-1)}{sd_u-d_u+k_u}}\\
&\hspace{108mm}+\alpha_{2}(m_j+n_j)\Bigg) \\
&\le  \alpha_{1} \cdot \hspace{-14mm}\sum_{( (k_0,d_0), \ldots, (k_u, d_u) ) \in S_{k,d}}  \frac{m^{\frac{sk_u}{sd_u-d_u+k_u}+\eps}}{r^{\frac{sk_ud}{sd_u-d_u+k_u}+d\eps}}\Bigg(q_{k_1,d_1}^{\frac{d_1}{d_1-k_1}-\frac{d}{d-k}}\cdots q_{k_u,d_u}^{\frac{d_u}{d_u-k_u}-\frac{d_{u-1}}{d_{u-1}-k_{u-1}}}\Bigg)^{\frac{(d_u-k_u)(s-1)}{sd_u-d_u+k_u}} \\
&\hspace{72mm}\cdot \sum_{j=1}^v n_j^{\frac{d(d_u-k_u)(s-1)}{(d-k)(sd_u-d_u+k_u)}} +\alpha_{2}\left(m'+O\left(nr^k\right)\right) \\
&\le  \alpha_{1} \cdot \hspace{-14mm}\sum_{( (k_0,d_0), \ldots, (k_u, d_u) ) \in S_{k,d}}  \frac{m^{\frac{sk_u}{sd_u-d_u+k_u}+\eps}}{r^{d\eps}}\Bigg(n^{\frac{d}{d-k}}q_{k_1,d_1}^{\frac{d_1}{d_1-k_1}-\frac{d}{d-k}}\cdots q_{k_u,d_u}^{\frac{d_u}{d_u-k_u}-\frac{d_{u-1}}{d_{u-1}-k_{u-1}}}\Bigg)^{\frac{(d_u-k_u)(s-1)}{sd_u-d_u+k_u}}\\
&\hspace{106mm}+\alpha_{2}\left(m'+O\left(nr^k\right)\right).
\end{align*}

By \eqref{eq:kstAssum} we have $n=O\left(m^{\frac{sk}{sd-d+k}}n^{\frac{sd-d}{sd-d+k}}\right)$.
Thus, when $\alpha_{1}$ is sufficiently large with respect to $r$ and $\alpha_{2}$, we get
\begin{align*}
&\sum_{j=1}^v I(\pts_j,\vars_j)  \\
&\hspace{2mm}=O\Bigg(\alpha_{1} \cdot \hspace{-6mm} \sum_{( (k_0,d_0), \ldots, (k_u, d_u) ) \in S_{k,d}}  \hspace{-8mm} \frac{m^{\frac{sk_u}{sd_u-d_u+k_u}+\eps}}{r^{d\eps}}\Bigg(n^{\frac{d}{d-k}}q_{k_1,d_1}^{\frac{d_1}{d_1-k_1}-\frac{d}{d-k}}\cdots q_{k_u,d_u}^{\frac{d_u}{d_u-k_u}-\frac{d_{u-1}}{d_{u-1}-k_{u-1}}}\Bigg)^{\frac{(d_u-k_u)(s-1)}{sd_u-d_u+k_u}}\Bigg)\\
&\hspace{137mm}+\alpha_{2}m'.
\end{align*}

When $r$ is sufficiently large with respect to $\eps$ and to the constant hidden in the $O(\cdot)$-notation, we have
\begin{align}
&\sum_{j=1}^v I(\pts_j,\vars_j) \nonumber \\
&\hspace{2mm}\le \frac{\alpha_{1}}{4} \Bigg(\sum_{( (k_0,d_0), \ldots, (k_u, d_u) ) \in S_{k,d}}  \hspace{-8mm} m^{\frac{sk_u}{sd_u-d_u+k_u}+\eps}\Bigg(n^{\frac{d}{d-k}}q_{k_1,d_1}^{\frac{d_1}{d_1-k_1}-\frac{d}{d-k}}\cdots q_{k_u,d_u}^{\frac{d_u}{d_u-k_u}-\frac{d_{u-1}}{d_{u-1}-k_{u-1}}}\Bigg)^{\frac{(d_u-k_u)(s-1)}{sd_u-d_u+k_u}} \nonumber \\
&\hspace{126mm}+\alpha_{2}m'. \label{eq:incCells}
\end{align}

\parag{Incidences on the partition} It remains to bound the number of incidences with points that lie on $\vb(f)$.
Set $\pts_0= \pts \cap \vb(f)$ and $m_0=|\pts_0|=m-m'$.
We associate each point $p\in \pts_0$ with an arbitrary component of $\vb(f)\cap W$ that $p$ is incident to.
To bound the number of incidences with $\pts_0$, it suffices to separately consider each irreducible component of $\vb(f)\cap W$ and the points that are associated with it.

Let $U$ be an irreducible component of $\vb(f) \cap W$.
Let $\overline{\pts}$ be the set of points of $\pts$ associated with $U$, and set $\overline{m} = |\overline{\pts}|$.
By Theorem \ref{th:partition} we have that $W\not\subseteq\vb(f)$, so $\dim U <d$.
We set $d' = \dim U$.
By taking $c$ to be sufficiently large, we get that $\deg U = \max\{O(r),D_W\} \le c$.

Since the incidence graph contains no $K_{s,t}$, either $\overline{m}<s$ or at most $t$ varieties of $\vars$ contain $U$.
In the latter case, the varieties of $\vars$ that contain $U$ form $O(\overline{m})$ incidences with $\overline{\pts}$.
In the former case, the number of such incidences is $O(n)$.
By Lemma \ref{le:BoundedNumComponents}, the number of incidences between $\overline{\pts}$ and varieties of $\vars$ that have a finite intersection with $U$ is also $O(n)$.
It remains to consider incidences with varieties $h\in \vars$ that satisfy $1\leq \dim(h\cap U)\leq \min\{k,d'-1\}$.
For $1\le k'\le \min\{k,d'-1\}$, let $\vars_{k'}$ denote the set of $k'$-dimensional intersections between $U$ and an element of $\vars$.
Set $n_{k'}=|\vars_{k'}|$ and note that $\sum_{k'} n_{k'}\leq n$.
By definition, for any $(k',d')\in R_{k,d}$ we have $n_{k'}\leq q_{k',d'}$.

For a fixed $1\le k'\le \min\{d'-1,k\}$, we would like to apply the first induction hypothesis with the set of varieties $\vars_{k'}$, $W=U, d=d'$, and $k=k'$.
For every $(k^*,d^*)\in R_{k',d'}$, denote by $\overline{q}_{k^*,d^*}$ the maximum number of varieties of $\vars_{k'}$ that have a $k^*$-dimensional intersection with any $d^*$-dimensional variety of degree at most $c$.
(When choosing $c$ in the statement of the theorem, in addition to taking $c\ge \max\{\deg f, D_W\}$, we set $c$ to be sufficiently large for reapplying the theorem with smaller values of $d$.)
If $(k^*,d^*)\in R_{k,d}$ then $\overline{q}_{k^*, d^*}\leq q_{k^*,d^*}$.
When $(k^*,d^*)\notin R_{k,d}$, we will use the trivial bound $\overline{q}_{k^*,d^*}\leq n_{k'}\leq n$.

We will now use the dependency of $\alpha_1$ and $\alpha_2$ in $d$, and to stress this we change the notation to $\alpha_{1,d}$ and $\alpha_{2,d}$.
By the first induction hypothesis we have
\begin{align}
I(&\overline{\pts},\vars_{k'}) \nonumber \\
&\le  \alpha_{1,d'}\cdot\hspace{-12mm} \sum_{( (k_0,d_0), \ldots, (k_u, d_u) ) \in S_{k',d'}}  \hspace{-8mm} m^{\frac{sk_u}{sd_u-d_u+k_u}+\eps}\Bigg(n_{k'}^{\frac{d'}{d'-k'}}\overline{q}_{k_1,d_1}^{\frac{d_1}{d_1-k_1}-\frac{d'}{d'-k'}}\cdots \overline{q}_{k_u,d_u}^{\frac{d_u}{d_u-k_u}-\frac{d_{u-1}}{d_{u-1}-k_{u-1}}}\Bigg)^{\frac{(d_u-k_u)(s-1)}{sd_u-d_u+k_u}} \nonumber \\
&\hspace{83mm}+\alpha_{2,d'}(\overline{m}+n) + O_r(\overline{m}+n). \label{eq:IncOnPart}
\end{align}

Consider a term from \eqref{eq:IncOnPart} of the form
\begin{equation}\label{eq:GenTerm}
m^{\frac{sk_u}{sd_u-d_u+k_u}+\eps}\Bigg(n_{k'}^{\frac{d'}{d'-k'}}\overline{q}_{k_1,d_1}^{\frac{d_1}{d_1-k_1}-\frac{d'}{d'-k'}}\cdots \overline{q}_{k_u,d_u}^{\frac{d_u}{d_u-k_u}-\frac{d_{u-1}}{d_{u-1}-k_{u-1}}}\Bigg)^{\frac{(d_u-k_u)(s-1)}{sd_u-d_u+k_u}}
\end{equation}

If $\frac{k}{d} \ge \frac{k_u}{d_u}$, we recall that $\overline{q}_{k_j,d_j}\le n_{k'} \le n$ for every $1\le j \le u$.
Combining this with Lemma \ref{le:ratio test} implies that \eqref{eq:GenTerm} is upper bounded by
\[ m^{\frac{sk_u}{sd_u-d_u+k_u}+\eps}n^{\frac{d_us-d_u}{sd_u-d_u+k_u}} = T_{d_u,k_u}(m,n) \cdot m^\eps = O(T_{d,k}(m,n) \cdot m^\eps) = O\left(m^{\frac{sk}{sd-d+k}+\eps}n^{\frac{ds-d}{sd-d+k}}\right). \]

Next, we consider the case when $\frac{k}{d} < \frac{k_u}{d_u}$.
By definition, $(k_0,d_0)=(k',d')$ and $\overline{q}_{k_0,d_0}=n_{k'}$.
Let $j$ be the smallest non-negative integer that satisfies $\frac{k}{d} < \frac{k_j}{d_j}$.
That is,
\[ \frac{k_0}{d_0}<\dots< \frac{k_{j-1}}{d_{j-1}}\leq\frac{k}{d}<\frac{k_j}{d_j}<\dots <\frac{k_u}{d_u}. \]

Note that $((k,d),(k_j,d_j),\ldots,(k_u,d_u))$ is a significant sequence.
We will use the term in \eqref{eq:generalBound} corresponding to this significant sequence to upper bound \eqref{eq:GenTerm}. For every $j'\geq j$ we have $\overline{q}_{k_{j'},d_{j'}}\le q_{k_j,d_j}$.
By also recalling that $\overline{q}_{k_{j'},d_{j'}}\leq n$, we obtain
\begin{align*}
\overline{q}_{k_0,d_0}^{\frac{d_0}{d_0-k_0}} &\overline{q}_{k_1,d_1}^{\frac{d_1}{d_1-k_1}-\frac{d_0}{d_0-k_0}}\cdots  \overline{q}_{k_j,d_j}^{\frac{d_{j}}{d_{j}-k_{j}}-\frac{d_{j-1}}{d_{j-1}-k_{j-1}}} \\
&\le n^{\frac{d_0}{d_0-k_0}} n^{\frac{d_1}{d_1-k_1}-\frac{d_0}{d_0-k_0}}\cdots n^{\frac{d_{j-1}}{d_{j-1}-k_{j-1}}-\frac{d_{j-2}}{d_{j-2}-k_{j-2}}} q_{k_j,d_j}^{\frac{d_{j}}{d_{j}-k_{j}}-\frac{d_{j-1}}{d_{j-1}-k_{j-1}}}\\
&= n^{\frac{d_{j-1}}{d_{j-1}-k_{j-1}}} q_{k_j,d_j}^{\frac{d_{j}}{d_{j}-k_{j}}-\frac{d_{j-1}}{d_{j-1}-k_{j-1}}} \le  n^{\frac{d}{d-k}} q_{k_j,d_j}^{\frac{d_{j}}{d_{j}-k_{j}}-\frac{d}{d-k}}.
\end{align*}

Thus, in this case \eqref{eq:GenTerm} is upper bounded by
\[ m^{\frac{sk_u}{sd_u-d_u+k_u}+\eps}\Bigg(n^{\frac{d}{d-k}}q_{k_j,d_j}^{\frac{d_j}{d_j-k_j}-\frac{d}{d-k}}\cdots q_{k_u,d_u}^{\frac{d_u}{d_u-k_u}-\frac{d_{u-1}}{d_{u-1}-k_{u-1}}}\Bigg)^{\frac{(d_u-k_u)(s-1)}{sd_u-d_u+k_u}} \]

By combining \eqref{eq:IncOnPart} with the two above bounds for \eqref{eq:GenTerm}, we obtain
\begin{align*}
I(&\overline{\pts},\vars_{k'})  \\
&= O_r\Bigg( \alpha_{1,d'}\cdot \hspace{-10mm}\sum_{( (k_0,d_0), \ldots, (k_u, d_u) ) \in S_{k,d}} \hspace{-13mm}
m^{\frac{sk_u}{sd_u-d_u+k_u}+\eps}\Bigg(n^{\frac{d}{d-k}}q_{k_1,d_1}^{\frac{d_1}{d_1-k_1}-\frac{d}{d-k}}\cdots q_{k_u,d_u}^{\frac{d_u}{d_u-k_u}-\frac{d_{u-1}}{d_{u-1}-k_{u-1}}}\Bigg)^{\frac{(d_u-k_u)(s-1)}{sd_u-d_u+k_u}}  \\
&\hspace{112mm}+\alpha_{2,d'}\left(\overline{m}+n\right)\Bigg).
\end{align*}

Lemma \ref{le:BoundedNumComponents} implies that $\vb(f)\cap W$ has $O_r(1)$ irreducible components.
By summing the above bound over each of these components and over every $k'$, and assuming $\alpha_{1,j} \le \alpha_{1,j+1}$ and $\alpha_{2,j} \le \alpha_{2,j+1}$ for every $j$, we obtain
\begin{align*}
I(&\pts_0,\vars)  \\
&= O_r\Bigg( \alpha_{1,d-1}\cdot \hspace{-10mm}\sum_{( (k_0,d_0), \ldots, (k_u, d_u) ) \in S_{k,d}} \hspace{-13mm}
m^{\frac{sk_u}{sd_u-d_u+k_u}+\eps}\Bigg(n^{\frac{d}{d-k}}q_{k_1,d_1}^{\frac{d_1}{d_1-k_1}-\frac{d}{d-k}}\cdots q_{k_u,d_u}^{\frac{d_u}{d_u-k_u}-\frac{d_{u-1}}{d_{u-1}-k_{u-1}}}\Bigg)^{\frac{(d_u-k_u)(s-1)}{sd_u-d_u+k_u}}  \\
&\hspace{112mm}+\alpha_{2,d-1}\left(m_0+n\right)\Bigg).
\end{align*}

By recalling that $n=O\left(m^{\frac{sk}{sd-d+k}}n^{\frac{sd-d}{sd-d+k}}\right)$ and taking $\alpha_{1,d}$ and $\alpha_{2,d}$ to be sufficiently large with respect to $\alpha_{1,d-1},\alpha_{2,d-1},$ and $r$, we obtain
\begin{align*}
I(&\pts_{0},\vars) \\
&\le \frac{\alpha_{1,d}}{2}\cdot \hspace{-2mm}\sum_{( (k_0,d_0), \ldots, (k_u, d_u) ) \in S_{k,d}} \hspace{-13mm}
m^{\frac{sk_u}{sd_u-d_u+k_u}+\eps}\Bigg(n^{\frac{d}{d-k}}q_{k_1,d_1}^{\frac{d_1}{d_1-k_1}-\frac{d}{d-k}}\cdots q_{k_u,d_u}^{\frac{d_u}{d_u-k_u}-\frac{d_{u-1}}{d_{u-1}-k_{u-1}}}\Bigg)^{\frac{(d_u-k_u)(s-1)}{sd_u-d_u+k_u}} \\
&\hspace{125mm}+\alpha_{2}m_0.
\end{align*}

Combining this bound with \eqref{eq:incCells} completes the two induction steps and the proof of the theorem.
\end{proof}

We can now discuss how the exponents in the bound of Theorem \ref{th:IncRealGen} were obtained.
For a significant sequence $((k,d),(k_1,d_1)\dots, (k_u, d_u))\in S_{k,d}$, the exponents in the corresponding term  $m^{\alpha+\varepsilon}n^{\beta}q_{k_1,d_1}^{\beta_1}\cdots q_{k_u, d_u}^{\beta_u}$ should satisfy the system
\begin{equation*}
\begin{cases}
\alpha+ s(\beta+\beta_1+\dots+\beta_u)=s\\
d\alpha+ (d-k)\beta=d\\
d_1 \alpha + (d_1-k_1)(\beta+\beta_1)=d_1 \\
\cdots\\
d_u\alpha+(d_u-k_u)(\beta+\beta_1+\dots+\beta_u)=d_u.
\end{cases}
\end{equation*}

This system leads to the exponents stated in the theorem, so it only remains to explain how these equations were obtained.
After partitioning the space in the proof of Theorem \ref{th:IncRealGen}, we sum up the incidences in the cells.
In this sum, the powers of $r$ in the numerator and denominator cancel out, leaving only $r^{-d\eps}$.
The denominator contains the factor $r^{d(\alpha+\eps)}$ and the numerator contains $r^{k\beta+d(1-\beta)}$ (coming from H\"older's inequality).
Asking these two powers to be equivalent up to the $d\eps$ immediately leads to the second equation of the above system.

Next, we assume that $q_{k_1,d_1}=n$.
In this case, we have $m^{\alpha+\varepsilon}n^{\beta}q_{k_1,d_1}^{\beta_1}\cdots q_{k_u, d_u}^{\beta_u} = m^{\alpha+\varepsilon}n^{\beta+\beta_1}q_{k_2,d_2}^{\beta_2}\cdots q_{k_u, d_u}^{\beta_u}$.
That is, we are in a problem corresponding to the significant sequence $((k,d),(k_2,d_2),\ldots,(k_u,d_u))$, and with $\beta$ replaced with $\beta+\beta_1$.
By repeating the argument from the previous paragraph with the new sequence, we obtain the third equation in the above system.
The next equation in the system is then obtained by setting $q_{k_1,d_1}=q_{k_2,d_2}=n$, and so on.
This leads to all of the equations of the system, except for the first and last ones.

In the proof of Theorem \ref{th:IncRealGen}, just below \eqref{eq:GenTerm}, we consider the case where $q_{k_i,d_i}=n$ for every $1\le i \le u$.
This implies $m^{\alpha+\varepsilon}n^{\beta}q_{k_1,d_1}^{\beta_1}\cdots q_{k_u, d_u}^{\beta_u} = m^{\alpha+\eps}n^{\beta+\beta_1+\cdots+\beta_u}$.
We then ask that $m^{\alpha+\eps}n^{\beta+\beta_1+\cdots+\beta_u} = O(T_{d_u,k_u}(m,n))$.
The first footnote in the introduction states two equations that are required for this bound to hold.
In these two equations, respectively replacing $d,k,\beta$ with $d_u,k_u,\beta+\beta_1+\cdots+\beta_u$ leads to the first and last equations in the above system.

\section{Lower bounds in $\RR^d$.} \label{sec:Lower}

In this section we prove the lower bounds stated in the introduction.
As usual, before each proof we repeat the statement of the theorem.
\vspace{2mm}

\noindent {\bf Theorem \ref{th:LowerMain}.} \emph{For any integer $d\ge 2$ there exists a sufficiently large constant $t$ satisfying the following claims for every $\eps>0$:}

\emph{(a) For any $n$ and $m=O(n^d)$, there exist a set $\pts$ of $m$ points and a set $\hyperp$ of $n$ hyperplanes, both in $\RR^d$, such that the incidence graph of $\pts\times \hyperp$ contains no $K_{2,t+1}$ and}
\[ I(\pts,\hyperp) = \Omega\left(m^{\frac{2d-2}{2d-1}}n^{\frac{d}{2d-1}-\eps} +m + n\right).\]

\emph{(b) For any $d\ge 4$, $n$, and $m=O(n^{d-2})$, there exist a set $\pts$ of $m$ points and a set $\hyperp$ of $n$ hyperplanes, both in $\RR^d$, such that the incidence graph of $\pts\times \hyperp$ contains no $K_{3,t+1}$ and}
\[ I(\pts,\hyperp) = \Omega\left(m^{\frac{3d^2-9d+2}{(d-2)(3d-1)}} n^{\frac{2d}{3d-1}-\eps} +m + n\right).\]

\begin{proof}
(a) To obtain $m$ incidences, we can place all the points of $\pts$ on a single hyperplane of $\hyperp$.
To obtain $n$ incidences, we can set all of the hyperplanes of $\hyperp$ to be incident to a single point of $\pts$.
Thus, we only need to construct a configuration with $\Omega\left(m^{\frac{2d-2}{2d-1}}n^{\frac{d}{2d-1}-\eps}\right)$ incidences.
For the case of $d=2$, see for example \cite{PS04}.
We may thus also assume that $d\ge 3$.

We consider the point set
\[ \pts = \left\{(x_1,\ldots,x_d) \in \ZZ^d\ :\ 0\ \le x_j \le m^{1/d}-1 \right\}.\]

For a parameter $N$ that will be set below, let $\lattice$ be an $N\times\cdots\times N$ section of $\ZZ^d$ centered at the origin.
We say that a nonzero element $v\in \lattice$ is a \emph{primitive vector} if there is no integer $j>1$ and $u\in \lattice$ such that $v = j\cdot u$.
By \cite[Corollary 1]{BSC17}, there exists a subset $V\subset \lattice$ of $\Theta\left(N^{d/(d-1)-\eps'}\right)$ primitive vectors such that any hyperplane in $\RR^d$ contains at most $t$ of these vectors (for a sufficiently large constant $t$).
Let $\hyperp$ be the set of hyperplanes in $\RR^d$ that contain at least one point of $\pts$ and whose normal direction is in $V$ (the sizes of the two vectors may differ).
The dot product of a vector from $V$ and a point of $\pts$ is an integer of size $O\left(Nm^{1/d}\right)$.
Thus, each vector of $V$ corresponds to $O\left(Nm^{1/d}\right)$ hyperplanes of $\hyperp$.
This implies that $|\hyperp|= O\left(N^{(2d-1)/(d-1)-\eps'} m^{1/d} \right)$.

By possibly adding hyperplanes to $\hyperp$, we assure that $|\hyperp|= \Theta\left(N^{(2d-1)/(d-1)-\eps'} m^{1/d} \right)$.
To have $n\approx |\hyperp|$, we set $N=n^{(d-1)/(2d-1-(d-1)\eps')}/m^{(d-1)/d(2d-1-(d-1)\eps')}$.
The assumption $m=O(n^d)$ implies $N=\Omega(1)$.
Every point of $\pts$ is incident to exactly one hyperplane of $\hyperp$ with each normal of $V$.
By taking $\eps'$ to be sufficiently small with respect to $\eps$, we obtain
\[ I(\pts,\hyperp) = mN^{\frac{d}{d-1}-\eps'} = \Theta\left(m^{\frac{2d-2}{2d-1}}\cdot \left(N^{\frac{d}{d-1}-\eps'} m^{\frac{1}{2d-1}}  \right)\right) = \Omega\left(m^{\frac{2d-2}{2d-1}} n^{\frac{d}{2d-1}-\eps}\right). \]

Let $\ell\subset\RR^d$ be a line.
For a hyperplane $h\in \hyperp$ to contain $\ell$, the normal of $h$ must be orthogonal to the direction of $\ell$.
That is, the normal of the hyperplane is in a given linear $(d-1)$-dimensional subspace.
By the choice of $V$, we obtain that at most $t$ hyperplanes of $\hyperp$ contain any given line.
This implies that the incidence graph of $\pts\times\hyperp$ contains no copy of $K_{2,t+1}$.

(b) As in part (a) of the proof, it is simple to obtain $\Theta(m+n)$ incidences, so we only need to construct a configuration with $\Theta\Big(m^{\frac{3d^2-9d+2}{(d-2)(3d-1)}} n^{\frac{2d}{3d-1}-\eps}\Big)$ incidences.

We consider the point set
\[ \pts' = \left\{(x_1,\ldots,x_d) \in \ZZ^d\ :\ 0\ \le x_j \le m^{1/d}-1 \right\}.\]

The distance between a point $(x_1,\ldots,x_d)\in \pts'$ and the origin is $\sqrt{x_1^2+\cdots+x_d^2}$.
Every such distance is the square root of an integer between zero and $d\cdot m^{2/(d-2)}$.
That is, the points of $\pts'$ determine $O\left(d\cdot m^{2/(d-2)}\right)$ distinct distances from the origin.
By the pigeonhole principle, there exists a distance $\delta$ such that $\Omega\left(m^{d/(d-2)}/\left(d\cdot m^{2/(d-2)}\right)\right) = \Omega(m)$  points are at distance $\delta$ from the origin.
In other words, the hypersphere $S_\delta$ centered at the origin and of radius $\delta$ contains $\Omega(m)$ points of $\pts'$. Let $\pts$ be a set of exactly $m$ of these points (if necessary, we can add extra generic points on $S_\delta$).
To recap, $\pts$ is a set of $m$  points with integer coordinates on the hypersphere $S_\delta$.

For a parameter $N$ that will be set below, let $\lattice$ be an $N\times\cdots\times N$ section of $\ZZ^d$ centered at the origin.
By \cite[Corollary 1]{BSC17}, there exists a subset $V\subset \lattice$ of $\Theta\left(N^{2d/(d-1)-\eps'}\right)$ primitive vectors such that any $(d-2)$-flat in $\RR^d$ contains at most $t$ of these vectors (for a sufficiently large constant $t$).
Let $\hyperp$ be the set of hyperplanes in $\RR^d$ that contain at least one point of $\pts$ and whose normal direction is in $V$ (the sizes of the two vectors may differ).

The dot product of a vector from $V$ and a point of $\pts$ is an integer of size $O\left(Nm^{1/(d-2)}\right)$.
Thus, the number of hyperplanes in $\hyperp$ is $O\left(N^{(3d-1)/(d-1)-\eps'} m^{1/(d-2)}\right)$.
By possibly adding generic hyperplanes to $\hyperp$, we can assure that $|\hyperp|= \Theta\left(N^{(2d-1)/(d-1)-\eps'} m^{1/d} \right)$.
To have $n\approx |\hyperp|$, we set $N=n^{(d-1)/(3d-1-(d-1)\eps')}/m^{(d-1)/(d-2)(3d-1-(d-1)\eps')}$.
The assumption $m=O\left(n^{d-2}\right)$ implies that $N=\Omega(1)$.

Every point of $\pts$ is incident to exactly one hyperplane of $\hyperp$ with each normal of $V$.
By taking $\eps'$ to be sufficiently small with respect to $\eps$, we obtain
\begin{align*}
I(\pts,\hyperp) = mN^{\frac{2d}{d-1}-\eps'} &= \Theta\left(m^{\frac{3d^2-9d+2}{(d-2)(3d-1)}}\cdot \left(N^{\frac{2d}{d-1}-\eps'} m^{\frac{2d}{(d-2)(3d-1)}} \right)\right) \\
&\hspace{50mm}= \Omega\left(m^{\frac{3d^2-9d+2}{(d-2)(3d-1)}} n^{\frac{2d}{3d-1}-\eps}\right).
\end{align*}

Let $F \subset\RR^d$ be a 2-flat.
For a hyperplane $h\in \hyperp$ to contain $F$, the normal of $h$ must be orthogonal to two vectors that span $F$.
That is, the normal of the hyperplane is in a given linear $(d-2)$-dimensional subspace.
By the choice of $V$, we have that at most $t$ hyperplanes of $\hyperp$ contain any given 2-flat.
Since no line contains three points of $\pts$, we conclude that the incidence graph of $\pts\times\hyperp$ contains no copy of $K_{3,t+1}$.
\end{proof}
\vspace{2mm}

\noindent {\bf Theorem \ref{th:LowerOther}.} \emph{Consider positive integers $k,d,d'$ that satisfy $(d'-1)/d' > k/d$.
Then it is impossible to completely remove the dependency in $q_{d'-1,d'}$ from the incidence bound of Theorem \ref{th:IncReal} in the case of $k$-flats in $\RR^d$ and $s=2$.}
\begin{proof}
By Theorem \ref{th:LowerMain}(a), there exist a set $\pts$ of $m$ points and a set $\hyperp$ of $n$ hyperplanes, both in $\RR^{d'}$, such that the incidence graph of $\pts\times \hyperp$ contains no $K_{2,t+1}$ and $I(\pts,\hyperp) = \Omega\left(T_{d'-1,d'}(m,n) \cdot n^{-\eps}\right)$.

Let $F$ be an arbitrary $d'$-flat in $\RR^{d}$, and place the above point-flat configuration in $F$.
Then, replace each $(d'-1)$-flat $h$ in the configuration in $F$ with a generic $k$-flat in $\RR^d$ that contains $h$.
We may assume that the intersection of such a generic $k$-flat with $F$ is exactly $h$, so no new incidences are created.
Since $(d'-1)/d' > k/d$, Lemma \ref{le:ratio test} implies that $T_{d'-1,d'}(m,n) \cdot n^{-\eps}$ is asymptotically larger than $T_{k,d}(m,n)$ (when $\eps$ is sufficiently small).
That is, we have a configuration of $k$-flats in $\RR^d$, with asymptotically more than $T_{k,d}(m,n)$ incidences coming from a constant-degree variety of dimension $d'$ that has many $(d'-1)$-dimensional intersections with the $k$-flats.
\end{proof}

\section{Transverse varieties} \label{sec:Trans}

The goal of this section is to prove Theorem \ref{th:RdTrans}. Instead of directly proving this theorem, we prove the following more general result where the points are contained in a constant-degree variety $W\subset \RR^{d^*}$.
Theorem \ref{th:RdTrans} is immediately obtained by setting $W=\RR^d$ and $d^*=d$ in Theorem \ref{th:IncRealGen}.

\begin{theorem}
Let $k,d,d^*,s,t,D,D_W$ be positive integers with $s\geq 2$ and $k\le d/2$.
For any $\eps>0$, there exists a constant $c$ such that the following holds.
Let $\pts$ be a set of $m$ points on an irreducible variety $W\subseteq\RR^{d^*}$ of dimension $d$ and degree $D_W$.
Let $\vars$ be a set of $n$ irreducible varieties of degree at most $D$ in $\RR^{d^*}$, such that every $h\in \vars$ satisfies $\dim (h\cap W)\le k$.
Assume that the incidence graph of $\pts\times \vars$ contains no copy of $K_{s,t}$, and that the varieties of $\vars$ are transverse.
Moreover, for each pair $(k',d')\in \overline{R}_{k,d}$, every $d'$-dimensional variety of degree at most $c$ has a $k'$-dimensional intersection with at most $q_{k',d'}$ varieties of $\vars$.
Then
\begin{align*}
I^*(&\pts,\vars) \nonumber\\
&=O\Bigg( \sum_{( (k_0,d_0), \ldots, (k_u, d_u) ) \in \overline{S}_{k,d}} \hspace{-13mm}
m^{\frac{sk_u}{sd_u-d_u+k_u}+\eps}\Bigg(n^{\frac{d}{d-k}}q_{k_1,d_1}^{\frac{d_1}{d_1-k_1}-\frac{d}{d-k}}\cdots q_{k_u,d_u}^{\frac{d_u}{d_u-k_u}-\frac{d_{u-1}}{d_{u-1}-k_{u-1}}}\Bigg)^{\frac{(d_u-k_u)(s-1)}{sd_u-d_u+k_u}} \nonumber \\
&\hspace{118mm}+ m + n \Bigg).
\end{align*}
\end{theorem}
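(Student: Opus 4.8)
The plan is to follow the proof of Theorem~\ref{th:IncRealGen} essentially line by line, running the same double induction --- an outer induction on $d$, and inside the step an inner induction on $m+n$ --- while dragging along the transversality hypothesis and only ever counting incidences at regular points of the varieties. The one genuinely new point is this: whenever the partitioning argument produces a subproblem of $k'$-dimensional intersections inside a $d'$-dimensional variety with $k'/d'>1/2$, we can no longer recurse (the statement requires $k\le d/2$), and we must instead annihilate such incidences using the transversality lemma recorded just before Theorem~\ref{th:RdTrans}. For the base case $d=2$ we necessarily have $k=1$; we project $\pts$ and $\vars$ through a generic linear map $\pi_2$ onto a $2$-plane, which --- exactly as in Theorem~\ref{th:IncRealGen} --- creates no new incidences and no collisions, turns each variety into a bounded-degree plane curve, and sends regular points to regular points. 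A generic projection also sends a transverse pair of varieties to a transverse pair of curves: the common regular points of two irreducible curves form an $O(1)$-point set, and for a generic $\pi_2$ the two (distinct) tangent lines at each such point go to two distinct lines through the origin. Hence $I^*(\pts,\vars)=I^*(\pi_2(\pts),\pi_2(\vars))$ and Theorem~\ref{th:UpperBounds}(a) applies; since $\overline{S}_{1,2}=\{((1,2))\}$, its leading term is precisely the single term of the claimed bound.

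For the induction step we take a partitioning polynomial $f$ of degree $O(r)$ for $W$ via Theorem~\ref{th:partition}, with the same ladder of constants $2^{1/\eps}\ll r\ll\alpha_2\ll\alpha_1$, and $c$ chosen larger still. The cell contribution is handled verbatim: transversality is inherited by the restriction of $\vars$ to a cell, $I^*$ only decreases, and summing the inner hypothesis over the $O(r^d)$ cells using H\"older's inequality and $n=O(m^s)$ gives, for $r$ large, at most a quarter of the target expression plus $O(m)$; every sequence appearing is in $\overline{S}_{k,d}$ since the inner hypothesis already ranges over $\overline{S}_{k,d}$. To bound the incidences on $\vb(f)\cap W$ we fix, via Lemma~\ref{le:BoundedNumComponents}, an irreducible component $U$ of $\vb(f)\cap W$ of dimension $d'<d$ and the points of $\pts$ associated with it, and we consider, for each $1\le k'\le\min\{k,d'-1\}$, the varieties $h\in\vars$ with $\dim(h\cap U)=k'$. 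If $k'/d'\le k/d$, Lemma~\ref{le:ratio test} shows the corresponding term $T_{k',d'}(m,n)\cdot m^\eps$ is $O(T_{k,d}(m,n)\cdot m^\eps)$ and is absorbed into the leading term. If $k/d<k'/d'\le 1/2$, then $(k',d')\in\overline{R}_{k,d}$, so there are at most $q_{k',d'}$ such varieties, and we recurse with $U$ in the role of $W$ and dimension $d'<d$; since $\overline{R}_{k',d'}\subseteq\overline{R}_{k,d}$, every sequence of $\overline{S}_{k',d'}$ extends (by prepending $(k,d)$, or by collapsing indices whose auxiliary $\overline{q}$ is bounded trivially by $n$) to a sequence of $\overline{S}_{k,d}$, and the arithmetic matching the returned terms to the target terms is identical to the corresponding passage in Theorem~\ref{th:IncRealGen}.

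The remaining regime $k'/d'>1/2$ is the crux, and it is where transversality is indispensable. Here recursion is forbidden, so we dissect the incidences $(p,h)$ (with $p$ a point of $\pts$ on $U$, $p$ regular on $h$, and $\dim(h\cap U)=k'$) by where $p$ sits. If $p$ is singular on $U$, then $p\in U_{\mathrm{sing}}$, a variety of dimension $<d'$ and bounded degree by Theorem~\ref{th:Singular}; if $p$ is regular on $U$ but singular on $h\cap U$, then $p$ lies on $(h\cap U)_{\mathrm{sing}}$, a variety of dimension $\le k'-1$ and bounded degree. In either case the incidences have been pushed onto varieties of strictly smaller dimension, and we iterate the whole argument on them (tracking that we still only count incidences at regular points of the $h$'s); a dimension strictly decreases at each descent, so the process terminates, ending either in one of the two easy regimes or in an $O(m+n)$ bound. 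Finally, if $p$ is regular on $U$, on $h$, and on $h\cap U$ simultaneously, the transversality lemma forbids two distinct such varieties $h,h'$ from sharing $p$: inside the $d'$-dimensional $T_pU$ the tangent $k'$-flats $T_p(h\cap U)$ and $T_p(h'\cap U)$ meet in dimension $\ge 2k'-d'>0$, so, since these flats sit in $T_ph$ and $T_ph'$ respectively, $T_ph\cap T_ph'\ne\{0\}$, contradicting transversality of $h,h'$. Hence each point is ``fully regular'' for at most one such $h$, so these incidences number at most $m$.

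Summing the three regimes over the $O_r(1)$ components and over all $k'$, and combining with the cell bound exactly as in Theorem~\ref{th:IncRealGen}, closes both inductions. The hard part is the $k'/d'>1/2$ regime: without the safety net of a parameter $q_{k',d'}$ one must verify that the descent onto $U_{\mathrm{sing}}$ and onto the loci $(h\cap U)_{\mathrm{sing}}$ really terminates in controlled terms (again lying in $\overline{S}_{k,d}$), and it is precisely the transversality lemma that forces the ``generic'' part of these incidences to collapse to the trivial bound $m$, rather than to something of size $T_{k',d'}(m,n)$ which --- being asymptotically larger than $T_{k,d}(m,n)$ --- would break the theorem.
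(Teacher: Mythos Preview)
Your approach is essentially the paper's: the same double induction, the same cell analysis borrowed verbatim from Theorem~\ref{th:IncRealGen} with $S_{k,d}$ replaced by $\overline{S}_{k,d}$, and the same key tangent-space dimension count in the regime $k'/d'>1/2$ (the paper phrases it as ``$T_p h_U$ and $T_p h'_U$ together span more than $d'$ dimensions inside the $d'$-flat $T_p U$,'' which is your $2k'-d'>0$ argument in contrapositive form), followed by the same descent onto $U_{\mathrm{sing}}$ and onto the singular loci of the $h\cap U$.

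One small point: your base case is both more complicated than necessary and not quite right as stated. In the generalized statement the varieties $h\in\vars$ live in $\RR^{d^*}$ with only $\dim(h\cap W)\le k$ constrained, so $h$ itself may be high-dimensional and a generic projection to a $2$-plane will not ``turn each variety into a bounded-degree plane curve.'' The paper avoids this entirely: for $d=2$ one has $\overline{S}_{1,2}=S_{1,2}=\{((1,2))\}$, so the target bound coincides with that of Theorem~\ref{th:IncRealGen}, and since $I^*(\pts,\vars)\le I(\pts,\{h\cap W:h\in\vars\})$ the base case follows immediately from Theorem~\ref{th:IncRealGen} with no need to preserve transversality under projection.
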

\begin{proof}
We use induction on $d$ to prove the more general statement.
In particular, we prove that
\begin{align}
I^*(\pts,\vars) \le \alpha_{1,k,d} \cdot \hspace{-13mm}\sum_{( (k_0,d_0), \ldots, (k_u, d_u) ) \in \overline{S}_{k,d}} \hspace{-13mm}
m^{\frac{sk_u}{sd_u-d_u+k_u}+\eps}\Bigg(n^{\frac{d}{d-k}}q_{k_1,d_1}^{\frac{d_1}{d_1-k_1}-\frac{d}{d-k}}\cdots &q_{k_u,d_u}^{\frac{d_u}{d_u-k_u}-\frac{d_{u-1}}{d_{u-1}-k_{u-1}}}\Bigg)^{\frac{(d_u-k_u)(s-1)}{sd_u-d_u+k_u}} \nonumber\\ &\hspace{17mm}+\alpha_{2,k,d}(m+n), \label{eq:FirstIndTrans}
\end{align}
where $\alpha_{1,k,d},\alpha_{2,k,d}$ are sufficiently large constants that depend on $d,k,s,t,D,D_W$, and $\eps$.
Out of these parameters we only place $d$ and $k$ in the subscript, to make parts of the proof easier to follow.
For the induction basis, we note that the claim is trivial when $d=2$.
Indeed, in this case $R_{1,2}$ is empty and $S_{1,2}$ contains only the empty sequence, so the required bound is the one in Theorem \ref{th:IncRealGen}.

To prove the induction step, we use a second induction on $m+n$.
For the base case of this second induction, when $m$ and $n$ are sufficiently small the result is obtained by choosing sufficiently large values for $\alpha_{1,k,d}$ and $\alpha_{2,k,d}$.
It remains to handle the induction step of the second induction.

By applying Theorem \ref{th:partition} with $U=W$, $d'=d$, and a sufficiently large constant $r$, we obtain a polynomial $f\in \RR[x_1,\ldots,x_{d^*}]$ of degree $O(r)$ such that every connected component of $W\setminus \vb(f)$ contains at most $m/r^{d}$ points of $\pts$.
Let $m' = |\pts \setminus \vb(f)|$.
Handling the incidences in the cells of the partition is almost identical to the proof of Theorem \ref{th:IncReal}.
The only change is replacing $R_{k,d}$ and $S_{k,d}$ with $\overline{R}_{k,d}$ and $\overline{S}_{k,d}$, respectively.
One subtle issue that arises is using the induction hypothesis with the new transversality restriction.
That is why the transversality is defined with respect to the original varieties of $\vars$ in $\RR^{d^*}$, and not with their intersections with $W$.
With this definition, replacing $W$ with a smaller variety cannot violate the transversality assumption.

We do not repeat the analysis of the cells from Theorem \ref{th:IncReal}, and refer the reader to the proof in Section \ref{sec:Rd}.
This analysis leads to
\begin{align}
I^*&(\pts\setminus \vb(f),\vars) \nonumber \\
&\le \frac{\alpha_{1,k,d}}{2} \Bigg(\sum_{( (k_0,d_0), \ldots, (k_u, d_u) ) \in \overline{S}_{k,d}}  \hspace{-10mm} m^{\frac{sk_u}{sd_u-d_u+k_u}+\eps}\Bigg(n^{\frac{d}{d-k}}q_{k_1,d_1}^{\frac{d_1}{d_1-k_1}-\frac{d}{d-k}}\hspace{-1mm} \cdots q_{k_u,d_u}^{\frac{d_u}{d_u-k_u}-\frac{d_{u-1}}{d_{u-1}-k_{u-1}}}\Bigg)^{\frac{(d_u-k_u)(s-1)}{sd_u-d_u+k_u}} \nonumber \\
&\hspace{118mm}+\alpha_{2,k,d}m'. \label{eq:incCellsTrans}
\end{align}

\parag{Incidences on the partition.}
It remains to bound the number of incidences with points that lie on $\vb(f)$.
Set $\pts_0= \pts \cap \vb(f)$ and $m_0=|\pts_0|=m-m'$.
We associate each point $p\in \pts_0$ with an arbitrary irreducible component of $\vb(f)\cap W$ that $p$ is incident to.
To bound the number of incidences with $\pts_0$, it suffices to separately consider each irreducible component of $\vb(f)\cap W$ and the points that are associated with it.

Let $U$ be an irreducible component of $\vb(f) \cap W$.
Let $\overline{\pts}$ be the set of points of $\pts$ associated with $U$, and set $\overline{m} = |\overline{\pts}|$.
By the definition of $f$, we have that $\dim U <d$.
By taking $c$ to be sufficiently large, we may assume that $\deg U\le c$.

Since the varieties of $\vars$ are transverse, at most one of these varieties can contain $U$.
We discard this variety, losing exactly $\overline{m}$ incidences with $\overline{\pts}$.
By Lemma \ref{le:BoundedNumComponents}, the number of incidences between $\overline{\pts}$ and varieties of $\vars$ that have a finite intersection with $U$ is also $O(n)$.
It remains to consider incidences with varieties $h\in \vars$ that satisfy $1\leq \dim(h\cap U)\leq \min\{k,d'-1\}$.
For $1\le k'\le \min\{k,d'-1\}$, let $\vars_{k'}$ denote the set of varieties of $\vars$ that have a $k'$-dimensional intersections with $U$.

For every $1\le k'\le \min\{d'/2,k\}$, we bound $I^*(\overline{\pts},\vars_{k'})$ by repeating the analysis in the proof of Theorem \ref{th:IncReal}.
Once again, the only change is replacing $R_{k,d}$ and $S_{k,d}$ with $\overline{R}_{k,d}$ and $\overline{S}_{k,d}$, respectively.
As in the proof of Theorem \ref{th:IncReal}, we obtain
\begin{align}
I^*(&\overline{\pts},\vars_{k'})  \nonumber \\
&= O_r\Bigg( \alpha_{1,k',d'}\cdot \hspace{-10mm}\sum_{( (k_0,d_0), \ldots, (k_u, d_u) ) \in \overline{S}_{k,d}} \hspace{-13mm}
m^{\frac{sk_u}{sd_u-d_u+k_u}+\eps}\Bigg(n^{\frac{d}{d-k}}q_{k_1,d_1}^{\frac{d_1}{d_1-k_1}-\frac{d}{d-k}}\cdots q_{k_u,d_u}^{\frac{d_u}{d_u-k_u}-\frac{d_{u-1}}{d_{u-1}-k_{u-1}}}\Bigg)^{\frac{(d_u-k_u)(s-1)}{sd_u-d_u+k_u}}  \nonumber \\
&\hspace{102mm}+\alpha_{2,k',d'}\left(\overline{m}+n\right)\Bigg). \label{eq:IncPartTrans}
\end{align}

We next consider $\vars_{k'}$ for a fixed $k'> d'/2$.
In this case we cannot repeat the proof of Theorem \ref{th:IncReal}, since we cannot apply the induction hypothesis.
We associate with every variety $h\in \vars_{k'}$ a subset $h_U \subseteq h\cap U$, and only consider incidences between $\overline{\pts}$ and $h_U$.
As a first step, we remove from $h_U$ every irreducible component of dimension at most $d'/2$, since such components can be included in the above argument.

We now show that every regular point of $U$ can be incident to at most one ``well-behaved'' element of $\vars_{k'}$.
Assume for contradiction that there exists a point $p$ that is a regular point of $U$, a regular point of two varieties $h,h'\in \vars_{k'}$, and a regular point of the corresponding $h_U$ and $h'_U$.
By the transversality assumption, $T_p h \cap T_p h'$ is the origin.
Since $T_p h_U \subset T_p h$ and $T_p h'_U \subset T_p h'$, we get that $T_p h_U \cap T_p h'_U$ is the origin.
Since both $T_p h_U$ and $T_p h'_U$ have dimension larger than $d'/2$, together they span a space of dimension larger than $d'$.
This is impossible, since both of these tangent flats should be contained in the $d'$-dimensional flat $T_p U$.
Thus, for every regular point $p\in U$, at most one $h\in \vars_{k'}$ satisfies that $p$ is a regular point of both $h$ and $h_U$.
Such containments lead to at most $\overline{m}$ incidences.

While we may ignore incidences with the singular points of a variety $h\in \vars_{k'}$, we still need to consider regular points of $h$ that are singular points of $h_U$.
For this purpose, we replace every irreducible component of $h_U$ with the set of singular points of this component.
By Theorem \ref{th:Singular}, the revised $h_U$ is of dimension at most $k'-1$.
We repeat the above process to handle incidences with the revised elements $h_U$.
That is, we rely on the proof of Theorem \ref{th:IncReal} to handle components of dimension at most $d'/2$, and observe that every regular point of $U$ is incident to at most one of the ``well-behaved'' elements of dimension larger than $d'/2$.
We repeat this process again and again, each time replacing every variety $h_U$ with the set of its singular points, until we remain only with varieties of dimension at most $d'/2$.
This process ends after at most $d'/2-1$ steps.

It remains to handle incidences with singular points of $U$.
We do this by separately repeating the above process for every irreducible component $\overline{U}$ of $U_\text{sing}$.
That is, we rely on the proof of Theorem \ref{th:IncReal} to handle low-dimensional components in $\vars_{k'}$, and observe that every regular point of $\overline{U}$ is a regular point of at most one of the higher-dimensional intersections.
We then need to repeat the proof for the singular set of $\overline{U}$, and so on.
By Theorem \ref{th:Singular}, at each step the dimension of the irreducible components decreases.
Thus, this process will require at most $d'$ steps.
At each step we also need to repeat the process described in the previous paragraph.

At every step of the above process, the number of irreducible components that are handled separately is multiplied by a factor of $O_{d,r}(1)$.
Thus,  the total number of components is $O_{d,r}(1)$.
Since in each step the number of additional incidences is either $O(\overline{m})$ or \eqref{eq:IncPartTrans}, we again obtain the bound \eqref{eq:IncPartTrans}.

By Lemma \ref{le:BoundedNumComponents}, the variety $W \cap \vb(f)$ consist of $O_r(1)$ irreducible components.
Set $\alpha_{1,k,d}' = \alpha_{1,k,d-1} +\alpha_{1,k-1,d}$ and $\alpha_{2,k,d}' = \alpha_{2,k,d-1} +\alpha_{2,k-1,d}$.
Summing \eqref{eq:IncPartTrans} over each of those implies
\begin{align*}
I^*(&\pts_0,\vars)  \\
&= O_{r,d}\Bigg( \alpha'_{1,k,d}\cdot \hspace{-10mm}\sum_{( (k_0,d_0), \ldots, (k_u, d_u) ) \in \overline{S}_{k,d}} \hspace{-13mm}
m^{\frac{sk_u}{sd_u-d_u+k_u}+\eps}\Bigg(n^{\frac{d}{d-k}}q_{k_1,d_1}^{\frac{d_1}{d_1-k_1}-\frac{d}{d-k}}\cdots q_{k_u,d_u}^{\frac{d_u}{d_u-k_u}-\frac{d_{u-1}}{d_{u-1}-k_{u-1}}}\Bigg)^{\frac{(d_u-k_u)(s-1)}{sd_u-d_u+k_u}}  \\
&\hspace{112mm}+\alpha'_{2,k,d}\left(m_0+n\right)\Bigg).
\end{align*}

By taking $\alpha_{1,d,k}$ to be sufficiently large with respect to $r,d,$ and $\alpha'_{1,d,k}$, and taking $\alpha_{2,d,k}$ to be sufficiently large with respect to $r,d,$ and $\alpha'_{2,d,k}$, we conclude that
\begin{align*}
I^*(&\pts_0,\vars)  \\
&= \frac{\alpha_{1,k,d}}{2}\cdot \hspace{-10mm}\sum_{( (k_0,d_0), \ldots, (k_u, d_u) ) \in \overline{S}_{k,d}} \hspace{-13mm}
m^{\frac{sk_u}{sd_u-d_u+k_u}+\eps}\Bigg(n^{\frac{d}{d-k}}q_{k_1,d_1}^{\frac{d_1}{d_1-k_1}-\frac{d}{d-k}}\cdots q_{k_u,d_u}^{\frac{d_u}{d_u-k_u}-\frac{d_{u-1}}{d_{u-1}-k_{u-1}}}\Bigg)^{\frac{(d_u-k_u)(s-1)}{sd_u-d_u+k_u}}  \\
&\hspace{112mm}+\alpha_{2,k,d}\left(m_0+n\right).
\end{align*}

Combining this with \eqref{eq:incCellsTrans} completes the two induction steps and the proof of the theorem.
\end{proof}

\section{Hyperplanes in $\CC^d$} \label{sec:Complex}

In this section we prove Theorem \ref{th:Cd}.
We write $z\in \CC$ as $z_x+iz_y$, where $z_x,z_y\in \RR$.
Sometimes we have $z_j\in \CC$, and then we write $z_j = z_{j,x}+iz_{j,y}$.
The polynomial partitioning technique does not work in complex spaces, since removing a variety cannot disconnect a complex space.
To overcome this issue, one often thinks of $\CC^d$ as $\RR^{2d}$.
We will rely on this approach, considering the map $\phi: \CC^d \to \RR^{2d}$ defined by
\[ \phi\left(z_1,\ldots,z_d\right)=\left(z_{1,x},z_{1,y},z_{2,x},z_{2,y},\ldots,z_{d,x},z_{d,y}\right). \]

Consider a complex hyperplane $h$ in $\CC^d$.
By definition, $h$ can be defined using a linear equation $a_1z_1+a_2z_2+\cdots+a_dz_d=b$, where $a_1,\ldots,a_d,b\in \CC$ and $z_1,\ldots,z_d$ are the complex coordinates of $\CC^d$.
Then, $\phi(h)$ is the variety in $\RR^{2d}$ defined by
\begin{align*}
a_{1,x}z_{1,x} -a_{1,y}z_{1,y}+\cdots +a_{d,x}z_{d,x} -a_{d,y}z_{d,y} = b_1, \\
a_{1,x}z_{1,y} + a_{1,y}z_{1,x} + \cdots + a_{d,x}z_{d,y} + a_{d,y}z_{d,x} = d_2.
\end{align*}

It is not difficult to verify that this system defines a $(2d-2)$-flat in $\RR^{2d}$, unless $a_1=\cdots=a_d=0$.
A point $p\in \CC^d$ is incident to $h$ if and only if the point $\phi(p)$ is incident to the flat $\phi(h)$.
Thus, we can reduce the problem of incidences with planes in $\CC^d$ to incidences with $(2d-2)$-flats in $\RR^{2d}$.

Since the proof of Theorem \ref{th:Cd} involves a long, technical, and straightforward case analysis, we begin by proving the special case of incidences with planes in $\CC^3$.
In this case, the problem is reduced to an incidence problem between points and 4-flats in $\RR^6$.
After presenting the full details of the proof of this special case, we explain how to extend this proof to incidences in $\CC^d$.
We explain the steps of the general proof, but do not write the (standard) full technical details of each case in this analysis.

In an incidence problem with 4-flats in $\RR^6$, the dimension ratio is 2/3.
According to Lemma \ref{le:ratio test}, we should worry about the dimension ratios 4/5 and 3/4;
that is, worry about 4-flats contained in five-dimensional components of the partition, and about 4-flats that have a three-dimensional intersection with four-dimensional components of the partition.
To handle the former case, we note that 4-flats that originate from planes in $\CC^3$ and are contained in a five-dimensional variety $U$ can intersect only in singular points of $U$.
To handle the latter case, we use the following lemma.

\begin{lemma} \label{le:compFlatsC3}
Let $U$ be a four-dimensional variety in $\RR^6$, and let $p$ be a regular point of $U$.
Let $h_1,\ldots,h_u$ be $u\ge 2$ planes in $\CC^3$ such that each of the 4-flats $\phi(h_1),\ldots,\phi(h_u)$ has a three-dimensional intersection with $U$.
Moreover, assume that $p$ is a regular point of $U \cap \phi(h_j)$ for every $1\le j\le u$.
Then $h_1 \cap \cdots \cap h_u$ is a complex line.
\end{lemma}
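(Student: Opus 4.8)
The plan is to analyze everything at the regular point $p$ using tangent spaces, exploiting the fact that the 4-flats $\phi(h_j)$ are not arbitrary flats but images under $\phi$ of \emph{complex} hyperplanes, which forces their tangent spaces to be complex-linear subspaces of $\RR^6 \cong \CC^3$. First I would set up the linear-algebraic picture: since $p$ is a regular point of $U$ and of each $U \cap \phi(h_j)$, the tangent space $T_p U$ is a $4$-dimensional real subspace and $T_p(U \cap \phi(h_j)) = T_p U \cap T_p \phi(h_j)$ is a $3$-dimensional real subspace contained in it, where $T_p \phi(h_j) = \phi(h_j)$ is the $4$-flat itself (viewed as a linear subspace). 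Each $\phi(h_j)$, being the realification of a complex hyperplane through the appropriate translate, is a $\CC$-linear (i.e. $J$-invariant, where $J$ is the complex structure on $\RR^6$) subspace of real dimension $4$, hence complex dimension $2$.

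Next I would use a dimension count: inside the $4$-dimensional real space $T_p U$ we have $u \ge 2$ subspaces $L_j := T_p U \cap \phi(h_j)$, each of real dimension $3$. Any two of them intersect in a subspace of real dimension at least $3 + 3 - 4 = 2$. The key step is to upgrade this to: $\bigcap_j \phi(h_j)$ has real dimension at least $2$ and is $J$-invariant, hence is a complex subspace of complex dimension at least $1$. To see the intersection is large enough, note $\bigcap_{j} L_j \supseteq \bigcap_j \phi(h_j) \cap T_p U$; since each $\phi(h_j)$ has real codimension $2$ in $\RR^6$ and each is $J$-invariant, the intersection $\bigcap_j \phi(h_j)$ is $J$-invariant of some even real dimension $2e$. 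If $e \ge 2$ we would need to rule out $\bigcap_j \phi(h_j)$ being too big (it cannot be, since distinct hyperplanes $h_j$ force it down); the honest target is $e = 1$, i.e. the complex intersection is exactly a complex line. The cleanest route: $\bigcap_j \phi(h_j) = \phi(\bigcap_j h_j)$, and $\bigcap_j h_j$ is an intersection of distinct complex hyperplanes in $\CC^3$, so it is a complex-affine subspace of complex dimension $0$, $1$, or $2$; dimension $2$ is excluded since the $h_j$ are distinct, and I must exclude dimension $0$ by showing the common real-tangent direction coming from the $L_j$'s is genuinely present.

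That last exclusion is where I would put the real work, and I expect it to be the main obstacle. The point is to show $\bigcap_j h_j$ is \emph{not} just a point. Here is the mechanism: translate so $\phi(p) = 0$, so that each $\phi(h_j)$ passes through the origin and equals $T_p\phi(h_j)$ as a linear subspace; then $L_j = T_p U \cap \phi(h_j)$ is $3$-dimensional inside the $4$-dimensional $T_p U$, so $L_j$ is a real hyperplane of $T_p U$. Two real hyperplanes of a $4$-space meet in a subspace of real dimension $\ge 2$, so $L_1 \cap L_2$ has real dimension $2$ or $3$. Now $L_1 \cap L_2 \subseteq \phi(h_1) \cap \phi(h_2) = \phi(h_1 \cap h_2)$, and $\phi(h_1 \cap h_2)$ has real dimension $2$ or $4$ (complex dimension $1$ or $2$); the case of real dimension $4$ is impossible because then $h_1 = h_2$. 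Hence $\phi(h_1 \cap h_2)$ has real dimension exactly $2$, i.e. $h_1 \cap h_2$ is a complex line $\ell_{12}$, and $L_1 \cap L_2 \subseteq \phi(\ell_{12})$ forces $\dim_\RR(L_1 \cap L_2) = 2$ and $L_1 \cap L_2 = \phi(\ell_{12}) \cap T_p U = \phi(\ell_{12})$ (so in fact $\phi(\ell_{12}) \subseteq T_p U$). Now run the same argument with $\ell_{12}$ against $\phi(h_3)$: $\phi(\ell_{12}) \subseteq T_p U$ and $L_3$ is a real hyperplane of $T_pU$, so $\phi(\ell_{12}) \cap L_3$ has real dimension $\ge 1$; but $\phi(\ell_{12}) \cap \phi(h_3) = \phi(\ell_{12} \cap h_3)$ is $\phi$ of a complex-affine subspace, so it has real dimension $0$ or $2$; a nonzero real vector in the intersection rules out $0$, forcing $\ell_{12} \subseteq h_3$. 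Inducting on $j$ in this fashion gives $\ell_{12} \subseteq h_j$ for all $j$, hence $\ell_{12} \subseteq \bigcap_j h_j$, and since $\bigcap_j h_j \subseteq h_1 \cap h_2 = \ell_{12}$ we conclude $\bigcap_j h_j = \ell_{12}$ is a complex line, as claimed.

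I would then double-check the two edge cases that make the induction go through: (i) that translating to put $\phi(p)$ at the origin is legitimate — it is, because incidences, tangent spaces, and complex-linearity are all translation-covariant, and a translate of a complex hyperplane is a complex hyperplane; and (ii) that $\phi$ really does carry complex-affine subspaces of $\CC^3$ to real-affine subspaces of $\RR^6$ of twice the dimension and commutes with intersection — this is immediate from $\RR$-linearity of $\phi$ and injectivity. The only genuinely delicate point is the repeated use of "the intersection contains a nonzero vector, and $\phi$ of a complex subspace cannot have odd real dimension, therefore the complex subspace is not a point," which is exactly the place where the complex structure is essential; without it one only gets real dimension $\ge 2$ from the hyperplane count but no control on parity, and the lemma would fail.
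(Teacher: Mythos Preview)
Your proof is correct and follows essentially the same approach as the paper: pass to tangent spaces at the regular point $p$, use a dimension count inside the $4$-dimensional $T_pU$ to force the real intersection of the $\phi(h_j)$'s to have positive dimension, and then invoke the parity constraint (images under $\phi$ of complex-affine subspaces have even real dimension) to upgrade this to a common complex line. The only cosmetic difference is that the paper argues with three hyperplanes at a time (getting $\dim(F_1\cap F_2\cap F_3)\ge 1$ in $T_pU$) and then reduces the case $u>3$ to the uniqueness of $h_1\cap h_2$, whereas you fix $\ell_{12}=h_1\cap h_2$ first and induct one hyperplane at a time; both routes rest on the identical linear-algebraic mechanism.
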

\begin{proof}
The claim is straightforward when $u=2$, so we assume that $u\ge 3$.
For $1\le j \le3$, let $F_j =T_p(U \cap \phi(h_j))$.
Note that each $F_j$ is a 3-flat contained in $T_p U$.
Since the 4-flat $T_p U$ contains the 3-flats $F_1,F_2,F_3$ and $p\in F_1\cap F_2 \cap F_3$, we obtain that $\dim (F_1 \cap F_2 \cap F_3 \cap T_p U) \ge 1$.
Since $F_j \subset \phi(h_j)$ for every $1\le j\le 3$, we obtain that $\dim (\phi(h_1)\cap \phi(h_2) \cap \phi(h_3)) \ge 1$.
This in turn implies that $\dim (h_1\cap h_2 \cap h_3) \ge 1$, so the three complex planes intersect in a complex line $\ell\subset \CC^3$.

The above completes the proof for the case of $u=3$.
We now assume that $u>3$ and let $3< j \le u$.
Repeating the above argument for $h_1,h_2,h_j$ implies that $h_1\cap h_2\cap h_j$ is a complex line $\ell_j \subset\CC^3$.
Since $h_1\cap h_2$ is a single line, we obtain that $\ell_j = \ell$.
Since this holds for every $3< j \le u$, we conclude that $h_1\cap \ldots \cap h_u = \ell$.
\end{proof}

We are now ready to prove the special case of incidences with planes in $\CC^3$.

\begin{theorem} \label{th:C3}
Let $s$ and $t$ be positive integers with $s\geq 2$.
Let $\pts$ be a set of $m$ points and let $\vars$ be a set of $n$ planes, both in $\CC^3$.
Assume that the incidence graph of $\pts\times \vars$ contains no copy of $K_{s,t}$.
Then for any $\eps>0$ we have
\begin{align*}
I(\pts,\vars)=O\left( m^{\frac{2s}{3s-1}+\eps}n^{\frac{3s-3}{3s-1}}+ m + n \right).
\end{align*}
\end{theorem}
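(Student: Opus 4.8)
The plan is to reduce the problem to incidences between the point set $\pts' = \phi(\pts)$ and the set $\vars'$ of $4$-flats $\{\phi(h) : h \in \vars\}$ in $\RR^6$, and then run the polynomial partitioning argument of Theorem~\ref{th:IncRealGen} with $k=4$, $d=6$, adapting it to exploit the special structure coming from $\CC^3$. Since $\phi$ is a bijection preserving incidences, $I(\pts,\vars) = I(\pts',\vars')$, and the incidence graph still has no $K_{s,t}$. The dimension ratio here is $2/3$, and $R_{4,6} = \{(3,4),(4,5)\}$, so the generic bound of Theorem~\ref{th:IncReal} would produce extra terms involving $q_{3,4}$ and $q_{4,5}$; the whole point is that for flats arising from $\CC^3$ these parameters are effectively bounded by constants, so those terms collapse and only the leading term $T_{4,6}(m,n) = m^{\frac{2s}{3s-1}}n^{\frac{3s-3}{3s-1}}$ survives (up to $m^\eps$).

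First I would set up the double induction exactly as in the proof of Theorem~\ref{th:IncRealGen}: an outer induction on $d$ (with base $d=2$ handled by Theorem~\ref{th:UpperBounds}(b) after a generic projection) and an inner induction on $m+n$. As before, Lemma~\ref{le:KSTgen} lets us assume $n = O(m^s)$, so that $n = O(T_{4,6}(m,n))$. Apply Theorem~\ref{th:partition} to $W = \RR^6$ with a large constant $r$ to get a partitioning polynomial $f$ of degree $O(r)$; the $O(r^6)$ cells each contain at most $m/r^6$ points, each $4$-flat meets $O(r^4)$ cells, and summing the inner induction hypothesis over the cells together with H\"older's inequality gives (after the powers of $r$ cancel up to $r^{-6\eps}$) a contribution of at most $\tfrac14\alpha_1\, T_{4,6}(m,n)\,m^\eps + \alpha_2 m'$, just as in \eqref{eq:incCells}.

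The heart of the argument is the incidences on the partition $\vb(f)\cap \RR^6$. Decompose this into $O_r(1)$ irreducible components; fix one, call it $U$, of dimension $d' \le 5$, with associated point set $\overline{\pts}$ of size $\overline{m}$. Varieties of $\vars'$ with a finite intersection with $U$, or containing $U$, contribute only $O(\overline m + n)$ incidences via Lemma~\ref{le:BoundedNumComponents} and the $K_{s,t}$-freeness. For $4$-flats $h'$ with $1 \le \dim(h'\cap U) \le \min\{4,d'-1\}$, we split by $k' = \dim(h' \cap U)$. When $k'/d' \le 2/3$ we apply the outer induction hypothesis in dimension $d' < 6$ and re-index significant sequences into $S_{4,6}$ exactly as in Theorem~\ref{th:IncRealGen}, producing only the leading term. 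The two dangerous cases are $d'=5, k'=4$ (ratio $4/5$) and $d'=4, k'=3$ (ratio $3/4$). For $d'=5$: a $4$-flat $\phi(h)$ contained in the five-dimensional $U$ forces $\phi(h)\cap \phi(h')$ to lie in $U_{\mathrm{sing}}$ for any two such flats, since two distinct $4$-flats in $\RR^5$ meet in a $3$-flat but two flats $\phi(h),\phi(h')$ from $\CC^3$ meeting in a set of real dimension $\ge 4$ would force $h,h'$ to coincide (a complex hyperplane intersection has even real dimension, at most $2$); hence at each regular point of $U$ at most one such flat is incident, and after peeling off $U_{\mathrm{sing}}$ via Theorem~\ref{th:Singular} (finitely many steps, dimension dropping each time) these contribute only $O(\overline m)$ per step. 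For $d'=4, k'=3$: this is precisely where Lemma~\ref{le:compFlatsC3} applies — at any regular point $p$ of $U$ that is also a regular point of $U \cap \phi(h_j)$ for $u$ of our flats, all $u$ of the $h_j$ share a common complex line. Since no $K_{s,t}$ sits in the incidence graph, the number of flats $\phi(h)$ whose three-dimensional intersection with $U$ passes (regularly) through a given $\overline{\pts}$-point is $O_{s,t}(1)$ after discarding at most $t$ flats through each common complex line; more carefully, one argues that the $3$-flats $U \cap \phi(h)$, restricted to regular points, form an incidence structure with $\overline{\pts}$ whose $K_{s,t}$-free count is governed by the constant $q_{3,4} = O_{s,t}(1)$, so that the corresponding terms in \eqref{eq:generalBound} with $q_{3,4}$ bounded by a constant are absorbed into $O(T_{4,6}(m,n) m^\eps)$. (One repeatedly peels singular loci of $U$ and of the intersections $U\cap \phi(h)$, as in the transversality argument of Section~\ref{sec:Trans}, incurring only $O_r(1)$ extra components and $O(\overline m)$ extra incidences each time.)

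Summing over the $O_r(1)$ irreducible components of $\vb(f)\cap\RR^6$ and over $k'$, and using $n = O(T_{4,6}(m,n))$, the partition contributes at most $\tfrac12\alpha_1 T_{4,6}(m,n)m^\eps + \alpha_2 m_0$ once $\alpha_1,\alpha_2$ are chosen large relative to $r$ and the lower-dimensional constants; combining with the cell bound closes both inductions. \textbf{The main obstacle} is the $d'=4, k'=3$ case: one must turn the geometric statement of Lemma~\ref{le:compFlatsC3} (that the $4$-flats with a common regular point in a three-dimensional intersection all share a complex line) into a genuine bound of the form ``$O_{s,t}(1)$ such flats through each $\overline{\pts}$-point'', which requires combining the common-line conclusion with the $K_{s,t}$-freeness and carefully handling the cascade of singular loci — exactly the technical core that makes the $\CC^d$ generalization ``long, technical, and straightforward.''
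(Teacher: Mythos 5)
The overall scaffolding you set up — transfer to $\RR^6$ via $\phi$, polynomial partitioning with a constant-degree polynomial, H\"older on the cell contributions, and appeal to Lemma~\ref{le:compFlatsC3} as the special complex ingredient — does match the structure of the paper's proof, and your treatment of $\dim U = 5$ with a $4$-flat $\phi(h) \subset U$ (two distinct complex planes span $\CC^3$, so $\phi(h),\phi(h')$ would span $\RR^6$, which a $5$-dimensional tangent space cannot accommodate) is essentially the paper's argument, modulo a small arithmetic slip ($4+4-5 = 3$, not $4$; either way it exceeds the maximal real dimension $2$ of $\phi(h \cap h')$ for distinct $h,h'$). However, there are two genuine gaps in how you close the two dangerous dimension-ratio cases.

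The first gap is in the $d'=4$, $k'=3$ case. You assert that after invoking Lemma~\ref{le:compFlatsC3} and $K_{s,t}$-freeness, ``the number of flats $\phi(h)$ whose three-dimensional intersection with $U$ passes (regularly) through a given $\overline{\pts}$-point is $O_{s,t}(1)$,'' i.e.\ that $q_{3,4}$ is effectively a constant. This is false: the lemma only tells you all such planes share a common complex line $\ell_p$, and the $K_{s,t}$ condition only bounds the number of planes containing $\ell_p$ when $\ell_p$ itself holds $\ge s$ points of $\pts$ (in which case that number is $\le t-1$). When $\ell_p$ contains fewer than $s$ points, the number of complex planes through $\ell_p$ can be arbitrarily large. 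The paper instead converts this into a \emph{containment} problem between the $2$-flats $F_p = \phi(\ell_p)$ and the $4$-flats of $\vars$, intersects with a generic $4$-flat $H$ to turn containments into point--$2$-flat incidences, projects to a generic $3$-flat, and applies Theorem~\ref{th:UpperBounds}(b) to get $O\left(m^{\frac{2s}{3s-1}+\eps}n^{\frac{3s-3}{3s-1}} + \overline{m} + n\right)$ — a nontrivial bound, not a constant per point. The trivial $K_{s,t}$ argument only handles the $2$-flats carrying $\ge s$ points (an $O(m)$ contribution); for the remaining $2$-flats one genuinely needs the incidence-reduction step, which your proposal is missing.

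The second gap is that for $\dim U = 5$ with $3$-dimensional intersections (ratio $3/5 < 2/3$), you cannot simply ``apply the outer induction hypothesis in dimension $d' < 6$''; there is no available induction hypothesis that hands you the desired bound for this sub-problem. If you instead tried to invoke Theorem~\ref{th:IncRealGen}, you would inherit extra terms with $q_{3,4}$, and as noted above those are not constants. The paper resolves this by introducing a \emph{second} partitioning polynomial $\overline{f}$ of degree $O(\overline{r})$ inside the $5$-dimensional $U$ and running a second induction on $\overline{m}+n$; the partition $\vb(\overline{f})\cap U$ has components of dimension $\le 4$, which are handled by the earlier cases (including the containment reduction for the $(3,4)$ case). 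Your proposal skips this second partitioning entirely, so the $\dim U = 5$, $k'=3$ case is left unaddressed.
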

\begin{proof}
As explained in the beginning of this section, $I(\pts,\vars) = I(\phi(\pts),\phi(\vars))$, so it suffices to bound the latter.
Note that the incidence graph of $\phi(\pts)\times\phi(\vars)$ also contains no copy of $K_{s,t}$.
Abusing the notation, in the rest of the proof we write $\pts$ instead of $\phi(\pts)$ and $\vars$ instead of $\phi(\vars)$.

To prove the theorem, we prove by induction on $m+n$ that
\begin{equation} \label{eq:FirstIndComp}
I(\pts,\vars) \le \alpha_{1} m^{\frac{2s}{3s-1}+\eps}n^{\frac{3s-3}{3s-1}} +\alpha_{2}(m+n),
\end{equation}
where $\alpha_{1},\alpha_{2}$ are sufficiently large constants that depend on $s,t,$ and $\eps$.
For the induction basis, the case where $m$ and $n$ are sufficiently small can be handled by choosing sufficiently large values of $\alpha_{1}$ and $\alpha_{2}$.
Throughout the proof, the hidden constants in the $O(\cdot)$-notations may also depend on $s,t,$ and $\eps$.
For brevity we write $O(\cdot)$ instead of $O_{s,t,\eps}(\cdot)$.

Since the incidence graph contains no copy of $K_{s,t}$, Lemma \ref{le:KSTgen} implies $I(\pts,\vars) = O(mn^{1-1/s}+n)$.
When $m=O(n^{1/s})$, this implies $I(\pts,\vars) = O(n)$. We may thus assume that
\begin{equation} \label{eq:kstAssumComp}
n=O(m^s).
\end{equation}

\parag{Partitioning the space.}
By Theorem \ref{th:partition} with $U=\RR^6$ and a constant $r$, we obtain a polynomial $f\in \RR[x_1,\ldots,x_6]$ of degree $O(r)$ such that every connected component of $\RR^6\setminus \vb(f)$ contains at most $m/r^6$ points of $\pts$.
The asymptotic relations between the various constants in the proof are
\[ 2^{1/\eps} \ll r \ll \overline{r} \ll \overline{\alpha}_2 \ll \overline{\alpha}_1 \ll \alpha_{2} \ll \alpha_{1}.\]

Denote the cells of the partition as $C_1, \ldots, C_v$.
By Theorem \ref{th:BaroneBasu}, we have that $v=O(r^6)$.
For each $1\le j \le v$, denote by $\vars_j$ the set of varieties of $\vars$ that intersect $C_j$, and set $\pts_j = C_j \cap \pts$.
We also set $m_j=|\pts_j|$, $m' = \sum_{j=1}^v m_j$, and $n_j=|\vars_j|$.
Note that $m_j\le m/r^6$ for every $1\le j \le v$.
By Theorem \ref{th:BaroneBasu}, every variety of $\vars$ intersects $O(r^4)$ cells of $\RR^6 \setminus \vb(f)$.
Therefore, $\sum_{j=1}^v n_j = O(nr^4)$.
H\"older's inequality implies
\begin{align*}
\sum_{j=1}^v n_j^{\frac{3s-3}{3s-1}}&\le \left(\sum_{j=1}^v n_j\right)^{\frac{3s-3}{3s-1}} \left(\sum_{j=1}^v 1\right)^{\frac{2}{3s-1}} = O\left(\left(nr^4\right)^{\frac{3s-3}{3s-1}} r^{\frac{12}{3s-1}}\right) = O\left(n^{\frac{3s-3}{3s-1}} r^{\frac{12s}{3s-1}}\right).
\end{align*}

Combining the above with the induction hypothesis implies
\begin{align*}
\sum_{j=1}^v I(\pts_j,\vars_j) &\le \sum_{j=1}^v \left(\alpha_{1} m_j^{\frac{2s}{3s-1}+\eps} n_j^{\frac{3s-3}{3s-1}} +\alpha_{2}(m_j+n_j)\right) \\
&\le  \alpha_{1} \frac{m^{\frac{2s}{3s-1}+\eps}}{r^{\frac{12s}{3s-1}+6\eps}}\sum_{j=1}^v n_j^{\frac{3s-3}{3s-1}} + \alpha_{2}\left(m'+O\left(nr^4\right)\right) \\[2mm]
&\le  \alpha_{1} \frac{m^{\frac{2s}{3s-1}+\eps}}{r^{6\eps}} n^{\frac{3s-3}{3s-1}} +\alpha_{2}\left(m'+O\left(nr^4\right)\right).
\end{align*}

By \eqref{eq:kstAssumComp} we have $n=O\left(m^{\frac{2s}{3s-1}}n^{\frac{3s-3}{3s-1}}\right)$.
Thus, when $\alpha_{1}$ is sufficiently large with respect to $r$ and $\alpha_{2}$, we get
\begin{align*}
\sum_{j=1}^v& I(\pts_j,\vars_j) =O\left(\alpha_{1}  \frac{m^{\frac{2s}{3s-1}+\eps}}{r^{6\eps}}n^{\frac{3s-3}{3s-1}}\right) +\alpha_{2}m'.
\end{align*}

When $r$ is sufficiently large with respect to $\eps$ and to the constant hidden in the $O(\cdot)$-notation, we have
\begin{equation} \label{eq:incCellsComp}
\sum_{j=1}^v I(\pts_j,\vars_j)  \le \frac{\alpha_{1}}{2}  m^{\frac{2s}{3s-1}+\eps}n^{\frac{3s-3}{3s-1}} +\alpha_{2}m'.
\end{equation}

\parag{Incidences on the partition} It remains to bound the number of incidences with points that lie on $\vb(f)$.
Set $\pts_0 = \pts \cap \vb(f)$ and $m_0 = |\pts_0|$.
We associate each point $p\in \pts_0$ with an arbitrary component of $\vb(f)$ that $p$ is incident to.
To bound the number of incidences with $\pts_0$, it suffices to separately consider each irreducible component of $\vb(f)$ and the points that are associated with it.

Let $U$ be an irreducible component of $\vb(f)$, let $\overline{\pts}$ denote the set of points that are associated with $U$, and set $\overline{m} = |\overline{\pts}|$.
Since $\deg f = O(r)$, the degree of $U$ is also $O(r)$.
We divide the analysis of the incidence with $\overline{\pts}$ according to $\dim U$.

We first assume that $\dim U \le 3$.
Since the incidence graph contains no copy of $K_{s,t}$, either $U$ is contained in fewer than $t$ flats of $\vars$ or $\overline{m}< s$.
In the former case, flats that contain $U$ form $O(\overline{m})$ incidences with $\overline{\pts}$, and in the latter they form $O(n)$ incidences.
It remains to consider flats of $\vars$ that intersect $U$ in a variety of dimension at most two.
This is an incidence problem between $\overline{\pts}$ and at most $n$ varieties of dimension at most two.
Let $\overline{\vars}_2$ be the set of these varieties.

Let $h$ be a generic 3-flat in $\RR^6$, and let $\pi_3(\cdot): \RR^6 \to \RR^3$ be the projection on $h$.
Since $h$ is a generic 3-flat, we may assume that no two points of $\overline{\pts}$ are projected to the same point of $h$.
Similarly, we may assume that the projections of $\overline{\pts}$ and $\overline{\vars}_2$ on $h$ do not lead to any new incidences.
That is, $I(\overline{\pts},\overline{\vars}_2) = I(\pi_3(\overline{\pts}),\pi_3(\overline{\vars}_2))$ and the incidence graph of $\pi_3(\overline{\pts})\times\pi_3(\overline{\vars}_2)$ contains no copy of $K_{s,t}$.
Theorem \ref{th:UpperBounds}(b) implies
\begin{equation*}
I(\overline{\pts},\overline{\vars_2}) = I(\pi_3(\overline{\pts}),\pi_3(\overline{\vars}_2)) = O\left(m^{\frac{2s}{3s-1}+\eps}n^{\frac{3s-3}{3s-1}}+\overline{m}+n \right).
\end{equation*}

We conclude that when $\dim U \le 3$, we have
\begin{equation} \label{eq:CompCases}
I(\overline{\pts},\vars) = O_r\left(m^{\frac{2s}{3s-1}+\eps}n^{\frac{3s-3}{3s-1}}+\overline{m}+n \right).
\end{equation}

\parag{The four-dimensional case.} We now assume that $\dim U = 4$.
In this case, at most one 4-flat of $\vars$ can contain $U$, and this flat forms $\overline{m}$ incidences with $\overline{\pts}$.
Every other 4-flat of $\vars$ intersects $U$ in a variety of dimension at most three.

By Theorem \ref{th:Singular}, the singular set $U_\text{sing}$ is a variety of dimension at most three and degree $O_r(1)$.
Thus, we can bound the number of incidences with points of $U_\text{sing}$ in the same way we handled the case of $\dim U \le 3$.
In particular, $U_\text{sing}$ consists of $O_r(1)$ components, and each should be handled separately as in the case of $\dim U \le 3$.
Similarly, incidences with 4-flats of $\vars$ that intersect $U$ in a variety of dimension at most two can be handled using the projection argument involving $\pi_3(\cdot)$.
If $h\in \vars$ has a three-dimensional intersection with $U$, we can also include any lower-dimensional components of this intersection in the above projection argument.
It remains to deal with 4-flats of $\vars$ that intersect $U$ in a three-dimensional variety, and only with the three-dimensional components of this intersection.

Let $h$ be a 4-flat of $\vars$ that has a three-dimensional intersection with $U$.
By Theorem \ref{th:Singular}, the singular set of $h\cap U$ is a variety of dimension at most two and degree $O_{r}(1)$, which can also be included in the projection argument above.
It remains to deal with the regular points of the three-dimensional components of $h\cap U$.
Let $\overline{\vars}_3$ be the set of the remaining portions of the intersections between $U$ and $\vars$.
That is, for each intersection we only include the three-dimensional components, and consider only incidences with their regular points.

It remains to handle incidences between points of $\overline{\pts}$ that are regular points of $U$ and regular points of elements of $\overline{\vars}_3$.
By Lemma \ref{le:compFlatsC3}, for each such point $p$ there exists a complex line $\ell_p\subset \CC^3$ that is contained in each of the complex planes whose corresponding elements in $\overline{\vars}_3$ have $p$ as a regular point.
Let $F_p = \phi(\ell_p)$.
Let $\flats$ be the set of these 2-flats (one 2-flat for each point of $\overline{\pts}$ that is a regular point of $U$).
To bound the number of incidences between the remaining points and the regular points of the elements of $\overline{\vars}_3$, it suffices to derive an upper bound for the number of containments between the 2-flats of $\flats$ and the 4-flats of $\vars$.

Let $H$ be a generic 4-flat in $\RR^6$.
We may assume that $H$ has a two-dimensional intersection with every 4-flat of $\vars$ and intersects each 2-flat of $\flats$ in a point.
Thus, inside of $H$ the above containment problem becomes an incidence problem between points and 2-flats.
By projecting this incidence problem on a generic 3-flat and applying Theorem \ref{th:UpperBounds}(b), we obtain the bound
\begin{equation} \label{eq:ContainmentIncRed}
O\left(m^{\frac{2s}{3s-1}+\eps}n^{\frac{3s-3}{3s-1}}+\overline{m}+n \right).
\end{equation}

There is a delicate issue with the above argument: It is possible that several regular points of $U$ correspond to the same 2-flat, and then a single containment between a 2-flat and a 4-flat corresponds to several incidences in $\RR^6$.
Recalling that the original incidence graph contains no $K_{s,t}$, we obtain that any 2-flat that corresponds to at least $s$ points is contained in at most $t-1$ of the 4-flats. 
The number of incidences coming from such containments is $O(m)$.
After discarding the 2-flats leading to these incidences, we are left with 2-flats that correspond to at most $s-1$ points of $U$. 
We may thus apply Theorem \ref{th:UpperBounds}(b) as before, multiply by $s-1$, and add the $O(m)$ discarded incidences.
This process leads to the same bound as in \eqref{eq:ContainmentIncRed}.

By combining all of the above cases, we conclude that when $\dim U = 4$, the bound \eqref{eq:CompCases} holds.

\parag{The five-dimensional case.} It remains to consider the case where $\dim U = 5$.
By Theorem \ref{th:Singular}, the singular set $U_\text{sing}$ is a variety of dimension at most four and degree $O_r(1)$.
We can thus bound the number of incidences with points of $U_\text{sing}\cap \overline{\pts}$ in the same way we handled the cases of $\dim U \le 4$.
In particular, $U_\text{sing}$ consists of $O_r(1)$ components, and each should be handled separately as in the cases of $\dim U \le 4$.
Similarly, incidences with 4-flats of $\vars$ that intersect $U$ in a variety of dimension at most two can be handled using the above projection argument involving $\pi_3(\cdot)$.

Consider two complex planes $h_1,h_2\subset \CC^3$ that intersect at a point $p \in \CC^3$.
After translating $\CC^3$ so that $p$ becomes the origin, we get that the linear subspaces $h_1$ and $h_2$ span $\CC^3$.
This in turn means that $\phi(h_1)$ and $\phi(h_2)$ span $\RR^6$.
If $\phi(h_1),\phi(h_2)\in U$, then $\phi(p)$ must be a singular point of $U$.
Indeed, if $p$ was a regular point of $U$ then we would have $\phi(h_1),\phi(h_2) \subset T_p U$.
This is impossible since at a regular point $p$, the tangent $T_p U$ is a hyperplane while $\phi(h_1),\phi(h_2)$ span $\RR^6$.
We conclude that for every $p\in \overline{\pts}$ that is a regular point of $U$, at most one 4-flat of $\vars$ is contained in $U$ and incident to $p$.
There are at most $\overline{m}$ such incidences.

It remains to consider 4-flats of $\vars$ that have a three-dimensional intersection with $U$.
Let $\overline{\vars}_3$ denote the set of such three-dimensional intersections.
%As in the case of $\dim U =4$, we may keep in $\overline{\vars}_3$ only the three-dimensional components of each intersection, and consider only incidences with the regular points of these components.
We clearly have $|\overline{\vars}_3|\le n$.
We derive an upper bound for $I(\overline{\pts},\overline{\vars}_3)$ using a second induction.
In particular, we prove by induction on $\overline{m}+n$ that
\begin{equation} \label{eq:SecondIndComp}
I(\overline{\pts},\overline{\vars}_3) \le \overline{\alpha}_{1} \overline{m}^{\frac{2s}{3s-1}+\eps}n^{\frac{3s-3}{3s-1}} +\overline{\alpha}_{2}(\overline{m}+n),
\end{equation}
where $\overline{\alpha}_{1},\overline{\alpha}_{2}$ are sufficiently large constants that depend on $s,t,r,$ and $\eps$.
For the induction basis, the case where $\overline{m}$ and $n$ are sufficiently small can be handled by choosing sufficiently large values of $\overline{\alpha}_{1}$ and $\overline{\alpha}_{2}$.

For the induction step, we use a second partitioning polynomial.
By Theorem \ref{th:partition} with $U$ and a sufficiently large constant $\overline{r}$, there exists a polynomial $\overline{f}\in \RR[x_1,\ldots,x_6]$ of degree $O(\overline{r})$ that does not vanish identically on $U$, such that every connected component of $U\setminus \vb(\overline{f})$ contains at most $\overline{m}/\overline{r}^5$ points of $\overline{\pts}$.

Denote the cells of the partition as $\overline{C}_1, \ldots, \overline{C}_{\overline{v}}$.
By Theorem \ref{th:BaroneBasu}, we have that $\overline{v}=O(\overline{r}^5)$.
For each $1\le j \le \overline{v}$, denote by $\overline{\vars}_j$ the set of varieties of $\overline{\vars}_3$ that intersect $\overline{C}_j$, and set $\overline{\pts}_j = \overline{C}_j \cap \overline{\pts}$.
We also set $\overline{m}_j=|\overline{\pts}_j|$, $\overline{m}' = \sum_{j=1}^{\overline{v}} \overline{m}_j$, and $\overline{n}_j=|\overline{\vars}_j|$.
Note that $\overline{m}_j\le \overline{m}/\overline{r}^5$ for every $1\le j \le \overline{v}$.
By Theorem \ref{th:BaroneBasu}, every variety of $\overline{\vars}_3$ intersects $O(\overline{r}^3)$ cells of $U \setminus \vb(\overline{f})$.
Therefore, $\sum_{j=1}^{\overline{v}} \overline{n}_j = O(n\overline{r}^3)$.
H\"older's inequality implies
\begin{align*}
\sum_{j=1}^{\overline{v}} \overline{n}_j^{\frac{3s-3}{3s-1}}&\le \left(\sum_{j=1}^{\overline{v}} \overline{n}_j\right)^{\frac{3s-3}{3s-1}} \left(\sum_{j=1}^{\overline{v}} 1\right)^{\frac{2}{3s-1}} = O\left(\left(n\overline{r}^3\right)^{\frac{3s-3}{3s-1}} \overline{r}^{\frac{10}{3s-1}}\right) = O\left(n^{\frac{3s-3}{3s-1}} \overline{r}^{\frac{9s+1}{3s-1}}\right).
\end{align*}

Combining the above with the induction hypothesis implies
\begin{align*}
\sum_{j=1}^{\overline{v}} I(\overline{\pts}_j,\overline{\vars}_j) &\le \sum_{j=1}^{\overline{v}} \left(\overline{\alpha}_{1} \overline{m}_j^{\frac{2s}{3s-1}+\eps} \overline{n}_j^{\frac{3s-1}{3s-3}} +\overline{\alpha}_{2}(\overline{m}_j+\overline{n}_j)\right) \\
&\le \overline{\alpha}_{1} \frac{m^{\frac{2s}{3s-1}+\eps}}{\overline{r}^{\frac{10s}{3s-1}+5\eps}}\sum_{j=1}^v \overline{n}_j^{\frac{3s-3}{3s-1}} + \overline{\alpha}_{2}\left(\overline{m}'+O\left(n\overline{r}^3\right)\right) \\[2mm]
&\le  \overline{\alpha}_{1} \frac{m^{\frac{2s}{3s-1}+\eps}}{\overline{r}^{5\eps}} n^{\frac{3s-3}{3s-1}} +\overline{\alpha}_{2}\left(\overline{m}'+O\left(n\overline{r}^3\right)\right).
\end{align*}

We recall that $n=O\left(m^{\frac{2s}{3s-1}}n^{\frac{3s-3}{3s-1}}\right)$.
When $\overline{\alpha}_{1}$ is sufficiently large with respect to $\overline{r}$ and $\overline{\alpha}_{2}$, we get
\begin{align*}
\sum_{j=1}^{\overline{v}} & I(\overline{\pts}_j,\overline{\vars}_j) =O\left(\overline{\alpha}_{1}  \frac{m^{\frac{2s}{3s-1}+\eps}}{\overline{r}^{5\eps}}n^{\frac{3s-3}{3s-1}}\right) +\overline{\alpha}_{2}\overline{m}'.
\end{align*}

When $\overline{r}$ is sufficiently large with respect to $\eps$ and to the constant hidden in the $O(\cdot)$-notation, we have
\begin{equation} \label{eq:SecondIndCompCells}
\sum_{j=1}^{\overline{v}} I(\overline{\pts}_j,\overline{\vars}_j)  \le \frac{\overline{\alpha}_{1}}{2}  m^{\frac{2s}{3s-1}+\eps}n^{\frac{3s-3}{3s-1}} +\overline{\alpha}_{2}\overline{m}'.
\end{equation}

It remains to bound the number of incidences with points of $\vb(\overline{f})\cap \overline{\pts}$.
Since $\vb(\overline{f})$ is a variety of dimension at most four and degree $O_{\overline{r}}(1)$, we can simply repeat the analysis of the cases of $\dim U \le 4$.
We get the upper bound in \eqref{eq:CompCases} for this number of incidences (with $r$ replaced by the larger constant $\overline{r}$).
By taking $\overline{\alpha}_1$ and $\overline{\alpha}_2$ to be sufficiently large with respect to $\overline{r}$, we obtain
\[ I(\overline{\pts}\cap \vb(\overline{f}),\overline{\vars}_3)  \le \frac{\overline{\alpha}_{1}}{2}  m^{\frac{2s}{3s-1}+\eps}n^{\frac{3s-3}{3s-1}} +\overline{\alpha}_{2}(\overline{m}-\overline{m}'+n). \]

Combining this with \eqref{eq:SecondIndCompCells} completes the proof of the second induction step.
We conclude that when $\dim U = 5$, the bound \eqref{eq:CompCases} also holds.

\parag{Completing the proof.}
By going over the various cases above, we note that in every case we have \eqref{eq:CompCases}.
By Lemma \ref{le:BoundedNumComponents}, the partition $\vb(f)$ consists of $O_r(1)$ irreducible components.
Summing up \eqref{eq:CompCases} over every irreducible component of $\vb(f)$ leads to
\begin{equation*}
I(\pts_0,\vars) = O_{r,\overline{r}}\left(m^{\frac{2s}{3s-1}+\eps}n^{\frac{3s-3}{3s-1}}+m_0+n \right)
\end{equation*}

Taking $\alpha_1$ and $\alpha_2$ to be sufficiently large, we obtain
\begin{equation*}
I(\pts_0,\vars) \le \frac{\alpha_1}{2} m^{\frac{2s}{3s-1}+\eps}n^{\frac{3s-3}{3s-1}}+\alpha_2( m_0+n).
\end{equation*}

Combining this bound with \eqref{eq:incCellsComp} completes the first induction step and the proof of the theorem.
\end{proof}

To generalize Theorem \ref{th:C3} to incidences with hyperplanes in $\CC^d$, we first generalize Lemma \ref{le:compFlatsC3}.

\begin{lemma}\label{le:compFlats}
Let $U$ be an $e$-dimensional variety in $\RR^{2d}$, where $d < e < 2d$.
Let $h_1,\dots, h_u$ be $u$ distinct hyperplanes in $\CC^d$ such that  each of the corresponding $(2d-2)$-flats $\phi(h_j)$ has an $(e-1)$-dimensional intersection with $U$.
Let $p\in \RR^{2d}$ be a regular point of each of these intersections with $U$, and also a regular point of $U$.
Then $\dim (\cap_{j=1}^u h_j)\geq 1$.
\end{lemma}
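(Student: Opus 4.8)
\textbf{Proof proposal for Lemma~\ref{le:compFlats}.}
The plan is to mimic the proof of Lemma~\ref{le:compFlatsC3}, first handling the case $u = 2$ via a dimension count on tangent spaces, then bootstrapping to general $u$ using the fact that two distinct hyperplanes in $\CC^d$ already determine a subspace of complex codimension at most $2$, which forces all intersections to collapse onto a single complex flat. First I would dispose of $u \le 2$: if $u = 1$ the statement is vacuous, and if $u = 2$ we argue directly. For $1 \le j \le 2$, set $F_j = T_p(U \cap \phi(h_j))$; by hypothesis $p$ is a regular point of $U \cap \phi(h_j)$, so $F_j$ is an $(e-1)$-flat, and since $U \cap \phi(h_j) \subseteq U$ with $p$ regular on $U$ we have $F_j \subseteq T_p U$, an $e$-flat. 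Two $(e-1)$-dimensional subspaces of an $e$-dimensional space meet in dimension at least $(e-1) + (e-1) - e = e - 2 \ge d - 1 \ge 1$ (using $e > d \ge 2$). Since $F_j \subseteq \phi(h_j)$, this gives $\dim(\phi(h_1) \cap \phi(h_2)) \ge 1$, hence $\dim(h_1 \cap h_2) \ge 1$ because $\phi$ is an $\RR$-linear isomorphism carrying complex flats to real flats of twice the real dimension (and a real flat of positive dimension inside $\phi(h_1)\cap\phi(h_2)$ pulls back to a complex flat of positive complex dimension, as $h_1 \cap h_2$ is itself a complex subspace).

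For the general case $u \ge 3$, I would first extract the complex subspace $L = h_1 \cap h_2$, which by the $u=2$ argument has $\dim_\CC L \ge 1$; since $h_1 \ne h_2$ are distinct hyperplanes, in fact $\dim_\CC L = d - 2$ if they are in ``general position'' and $d-1$ otherwise, but the only thing we need is $1 \le \dim_\CC L \le d - 1$. Now fix $j$ with $3 \le j \le u$ and apply the $u = 2$ argument to the triple reasoning on $h_1, h_2, h_j$: the tangent flats $F_1, F_j$ at $p$ are $(e-1)$-flats inside the $e$-flat $T_pU$, so $\dim(F_1 \cap F_j) \ge e - 2 \ge 1$, giving $\dim_\CC(h_1 \cap h_j) \ge 1$; likewise $\dim_\CC(h_2 \cap h_j) \ge 1$. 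I then want to conclude $L \subseteq h_j$, i.e. $h_1 \cap h_2 \subseteq h_j$. This does not follow from the two pairwise intersections alone, so here I would instead run the tangent-space count for all three simultaneously: $F_1, F_2, F_j$ are three $(e-1)$-flats in the $e$-flat $T_pU$, and an induction on the number of subspaces shows that the intersection of $r$ hyperplanes (codimension-$1$ subspaces) of an $e$-space has dimension at least $e - r$; with $r = 3$ and $e \ge d \ge \max\{3, \dots\}$ we only get $e - 3$, which is $\ge 1$ as soon as $e \ge 4$ but could be $0$ when $e = 3, d = 2$ — however that degenerate low-dimensional range is excluded since $d < e$ forces $d \le e - 1$, and we may assume $d \ge 2$, so the problematic configurations are $e = 3, d = 2$. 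I would handle this by observing that in the relevant application $e$ is large; more robustly, I would argue as in Lemma~\ref{le:compFlatsC3}: since $h_1 \cap h_2$ is a fixed complex flat $L$ of dimension $d - 2 \ge 0$ (the case $d = 2$, $L$ a point, being trivial because then every $h_j \ni p$ and $h_1 \cap h_2 = \{p\}$ already shows $\cap_j h_j \ne \emptyset$, though dimension $\ge 1$ would need more — see below), and since $\dim(h_1 \cap h_2 \cap h_j) \ge \dim(F_1 \cap F_2 \cap F_j \cap T_pU) / 2 \ge \lceil (e-3)/2 \rceil$ via $\phi$, we get that $h_1 \cap h_2 \cap h_j$ is a nonempty complex flat, and since it is contained in $L$ which has dimension $d - 2$, repeating over all $j$ and intersecting forces $\cap_{j=1}^u h_j$ to be a complex flat; its dimension is at least $1$ provided the running intersection never drops to dimension $0$, which the tangent-flat bound $\lceil(e-u)/2\rceil$ guarantees once $e - u \ge 1$.

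The honest difficulty, and the step I expect to be the main obstacle, is controlling the dimension of $\cap_{j=1}^u h_j$ when $u$ is large relative to $e$: the crude codimension-additivity bound gives only $\dim_\CC(\cap_{j=1}^u h_j) \ge \lceil (e - u)/2 \rceil$ through the tangent-space argument, which degrades to $0$ once $u \ge e$. To get a clean ``$\ge 1$'' statement for all $u$ I would exploit that, in the intended setting, the $(2d-2)$-flats $\phi(h_j)$ all pass through the common regular point $p$ of $U$ and have $(e-1)$-dimensional intersection with $U$, so their tangent flats $F_j$ all lie in the fixed $e$-flat $T_pU$ \emph{and} each $F_j = T_pU \cap \phi(h_j)$ has codimension exactly $1$ in $T_pU$ (since $\phi(h_j)$ has codimension $2$ in $\RR^{2d}$ and $T_pU$ has codimension $2d - e$, and the intersection being $(e-1)$-dimensional is the generic/transverse outcome). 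Then the complex-linear structure does the work: the $F_j$'s, being $T_pU \cap \phi(h_j)$ where $h_j$ is complex, are themselves ``complex-compatible'' under the partial complex structure, and the key point is that once $h_1 \cap h_2$ is a complex flat $L$ of positive codimension in each $h_j$, the pairwise arguments show $L \cap h_j$ is again a \emph{complex} flat, and a nested sequence of complex flats $L \supseteq L \cap h_3 \supseteq L \cap h_3 \cap h_4 \supseteq \cdots$ inside the $(d-2)$-dimensional $L$ can drop dimension at most $d - 2$ times before stabilizing; combined with the tangent bound forcing each nonempty step to have dimension $\ge 1$ as long as $e - u \ge 1$, and noting $u$ is bounded in applications (we only invoke this for bounded-degree configurations, so $u = O(1)$), the conclusion $\dim_\CC(\cap_{j=1}^u h_j) \ge 1$ follows. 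I would present the write-up in exactly the two-phase shape of Lemma~\ref{le:compFlatsC3}: (i) the $u = 2$ tangent-count, transferred across $\phi$; (ii) for $u > 2$, apply (i) to $h_1, h_2, h_j$, observe the resulting complex line/flat is forced to coincide with $h_1 \cap h_2$ because $h_1 \cap h_2$ is already low-dimensional, and conclude $\cap_{j=1}^u h_j = h_1 \cap h_2 \cap h_3$, a complex flat of dimension $\ge 1$.
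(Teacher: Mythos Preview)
Your proposal has a genuine gap: you try to control the full intersection $\cap_{j=1}^u F_j$ of the tangent hyperplanes in $T_pU$, and the codimension count gives only $\dim \ge e-u$, which collapses once $u \ge e$. Your attempted rescue --- that ``$u$ is bounded in applications'' --- is incorrect: in the paper's use of this lemma, $u$ is the number of hyperplanes of $\vars$ whose realifications meet $U$ in codimension one at $p$, and this can be as large as $n$. The alternative bootstrapping line you sketch (fix $L=h_1\cap h_2$ and try to show $L\subseteq h_j$ for all $j$) also does not go through as written, because in $\CC^d$ with $d>3$ the flat $L$ has complex dimension $d-2$, not $1$, and the triple tangent bound $\dim(F_1\cap F_2\cap F_j)\ge e-3$ does not force $h_j$ to contain all of $L$; your final claim that $\cap_{j=1}^u h_j = h_1\cap h_2\cap h_3$ is simply unjustified.

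The paper sidesteps this by separating the analytic step from the combinatorial step. Analytically, it only ever intersects $d$ of the tangent hyperplanes $F_{j_1},\ldots,F_{j_d}$ inside the $e$-flat $T_pU$, obtaining real dimension $\ge e-d\ge 1$; since $\phi(h_{j_1})\cap\cdots\cap\phi(h_{j_d})$ is the realification of a complex flat and hence has even real dimension, this forces real dimension $\ge 2$, i.e.\ any $d$ of the $h_j$ share a complex line. The extension to arbitrary $u$ is then done purely in $\CC^d$ by a minimality trick: let $d_0$ be the smallest integer for which some $d-d_0$ of the hyperplanes have intersection $I$ of complex dimension exactly $d_0$; the previous step gives $d_0\ge 1$, and minimality forces every remaining $h_j$ to contain $I$ (otherwise $\dim(h_j\cap I)=d_0-1$, contradicting minimality). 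This is the idea you are missing: bound only $d$-wise intersections via tangent spaces, then finish combinatorially.
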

\begin{proof}
When $u<d$ the statement of the lemma is trivial, so assume that $u\ge d$.
For $1\leq j\leq u$, set $F_j=T_p(U\cap \phi(h_j))$.
Each $F_j$ is an $(e-1)$-flat inside $T_p(U)$.
We consider $T_p(U)$ as $\RR^e$ with $p$ as the origin, and then each $F_j$ is a linear subspace.
Recall that any linear subspaces $A$ and $B$ satisfy $\dim(A\cap B)\geq \dim A+\dim B-\dim (A\cup B)$.
In particular, if $B$ has co-dimension 1 then $\dim(A\cap B)\geq \dim A-1$.
Applying this repeatedly with $1\leq j_1<\cdots<j_d\leq u$ leads to $\dim (F_{j_1}\cap \cdots \cap F_{j_d})\geq (e-1)-1-\cdots-1\ge 1$ (since $e>d$).
This implies any $d$ hyperplanes from $h_1,\cdots, h_u$ contain a common line in $\CC^d$.

Let $d_0$ be the smallest non-negative integer such that there exist hyperplanes $h_{j_1},\dots, h_{j_{d-d_0}}$ whose intersection is of dimension $d_0$.
Since every two of the hyperplanes have an intersection of dimension $d-2$, we have that $d_0 \le d-2$.
Since every $d$ of the hyperplanes contain a common line, we have $1\le d_0 \le d-2$.
We consider hyperplanes $h_{j_1},\cdots, h_{j_{d-d_0}}$ such that the intersection $I=h_{j_1}\cap \cdots\cap h_{j_{d-d_0}}$ is a $d_0$-flat.
By the definition of $d_0$, for a hyperplane $h \notin \{h_1,\ldots,h_{j_{d-d_0}}\}$ we have $\dim (h\cap I)\ge d_0$.
This implies that $h$ contains $I$.
Since this holds for every $h$, we conclude that $\dim (\cap_{j=1}^u h_j)\geq d_0\geq 1$.
\end{proof}

We now discuss the proof of Theorem \ref{th:Cd}, and first recall the statement of this theorem.
\vspace{2mm}

\noindent {\bf Theorem \ref{th:Cd}.}
\emph{Let $s,t \ge 2$ be integers.
Let $\pts$ be a set of $m$ points and let $\vars$ be a set of $n$ hyperplanes, both in $\CC^d$.
Assume that the incidence graph of $\pts\times \vars$ contains no copy of $K_{s,t}$.
Then for any $\eps>0$ we have}
\begin{align*}
I(\pts,\vars)=O\left( m^{\frac{(d-1)s}{ds-1}+\eps}n^{\frac{ds-d}{ds-1}}+ m + n \right).
\end{align*}
\begin{proof}[Proof sketch.]
The proof goes along the same lines as the proofs of Theorems \ref{th:C3} and \ref{th:IncReal}, but with more cases to handle.
As in the case of Theorem \ref{th:IncReal}, the proof is by induction on $d$.
For the induction basis, the case of $d=3$ is Theorem \ref{th:C3} and the case of $d=2$ can be found in \cite{ST12}.
We prove the induction step using a second induction on $m+n$.
The second induction basis is immediate by taking sufficiently large constants in the $O(\cdot)$-notation.

To prove the second induction step, we move from $\CC^d$ to $\RR^{2d}$ using the map $\phi(\cdot)$, as described in the beginning of the section.
We then take a constant-degree partitioning polynomial $f\in \RR[x_1,\ldots,x_{2d}]$.
Handling the incidences in the cells is straightforward, by applying the induction hypothesis separately in each cell and using H\"older's inequality (see the proofs of Theorems \ref{th:C3} and \ref{th:IncReal}).
To close the induction step, in remains to bound the number of incidences on the partition $\vb(f)$.

As in the previous proofs, we separately consider each irreducible component of $\vb(f)$.
Let $U$ be such a component of dimension $d'$.
For each $0\le k <d'$, we separately bound the number of incidences between points of $\pts\cap U$ and hyperplanes of $\phi(\vars)$ that have a $k$-dimensional intersection with $U$.
We denote as $\vars_k$ the set of these $k$-dimensional varieties in $U$.

When $k<d$, we simply project the incidence problem onto a generic $d$-flat and bound the number of incidences using Theorem \ref{th:UpperBounds}(b).
That is, we project the point set $\pts\cap U$ and the set of varieties $\vars_k$.
By Lemma \ref{le:ratio test}, the resulting incidence bounds are subsumed by the bound of the induction.
This is a straightforward generalization of how intersections of dimension at most two were handled in the proof of Theorem \ref{th:C3}.

When $k\ge d$ and $k=d'-1$, we rely on Lemma \ref{le:compFlats} to handle incidences between regular points $U$ and regular points of varieties of $\vars_k$.
By Lemma \ref{le:compFlats}, for each regular point $p\in U$ there exists a complex line $\ell_p \subset \CC^d$ such that $\ell_p \subset h$ for every hyperplane $h\in \vars$ satisfying $\dim (\phi(h)\cap U)= k$ and that $p$ is a regular point of $\phi(h)\cap U$.
We associate such a complex line with each regular $p\in U \cap \pts$, and denote the resulting set of lines as $\lines_U$. Let $g$ be a generic hyperplane in $\CC^d$, such that every line of $\lines_U$ intersects $g$ at a single point, and every hyperplane of $\vars$ intersects $g$ at $(d-2)$-flat.
Inside of $g$, the number of line-flat containments becomes the number of incidences between points and hyperplanes in $\CC^{d-1}$. (As explained in the proof of Theorem \ref{th:C3}, we need to separately handle lines that correspond to at least $s$ points of $U$.)
This number of incidences is bounded by the first induction hypothesis, and is subsumed by the bound of the induction step.
This is a straightforward generalization of the use of Lemma \ref{le:compFlatsC3} in the proof of Theorem \ref{th:C3}.

In the preceding paragraph we only bounded a specific type of incidences in the case of $k\ge d$ and $k=d'-1$.
We now consider the remaining incidences in this case.
By Theorem \ref{th:Singular}, the set of singular points of $U$ is a variety of dimension at most $d'-1$.
We separately study each of the irreducible components of the singular set, in the same way we separately study each component of the partition $\vb(f)$.
By Lemma \ref{le:BoundedNumComponents} there are $O_{d',k}(1)$ such irreducible components, each of dimension at most $d'-1$.
We also need to handle incidences between regular points of $U$ and singular points of the varieties of $\vars_k$.
By Theorem \ref{th:Singular}, the set of singular points of a $k$-dimensional variety $h\in \vars_k$ is a variety of dimension at most $k-1$.
We can handle these lower-dimensional varieties in the same step we handle lower-dimensional intersections with $U$.
Once again, this is a straightforward generalization of the corresponding part of the proof of Theorem \ref{th:C3}.

It remains to consider the case where $k\ge d$ and $k<d'-1$.
In this case we have $(d-1)/d > k/d'$.
We may thus bound the number of incidences as in the proof of Theorem \ref{th:IncReal}.
By Lemma \ref{le:ratio test}, the resulting incidence bound would be subsumed in the bound of the induction step.
While repeating the proof of Theorem \ref{th:IncReal}, we obtain various subproblems involving incidences between $k''$-dimensional varieties and points on a $d''$-dimensional variety.
For each subproblem we repeat the above: If $k''<d$ we project onto a generic $d$-flat, when $k''\ge d$ and $k''=d''-1$ we rely on Lemma \ref{le:compFlats}, and otherwise we repeat the analysis of Theorem \ref{th:IncReal}.

There are several recursive steps in the above analysis, where at each step we decrease either the dimension of $U$ or the dimension of the varieties we are working with.
It is not difficult to verify that the number of steps is $O_{d,k}(1)$ and the bound obtained by each step is subsumed by the bound of the induction step.
We close the induction step just as in the proof of Theorem \ref{th:C3}, by taking sufficiently large constants $\alpha_1$ and $\alpha_2$.
Unlike the proof of Theorem \ref{th:C3}, these two constants depend on $d$ and $k$ such that $\alpha_{1,d,k}$ is sufficiently large with respect to $\alpha_{1,d-1,k}$ and $\alpha_{1,d,k-1}$, and similarly for $\alpha_{2,d,k}$.
\end{proof}

%%%%%%%%%%%%%%%%%%%%%%%%%%%%%%%%%%%%%%%%%%%%%%%%%%%%%%%%%%%%%%%%%%%%%%%%%%%%%%%%%


\begin{thebibliography}{99}
%
\bibitem{AS07}
R.\ Apfelbaum and M.\ Sharir,
Large complete bipartite subgraphs in incidence graphs of points and hyperplanes,
\emph{SIAM Journal on Discrete Mathematics} {\bf 21} (2007), 707--725.
%
\bibitem{BSC17}
M.\ Balko, J.\ Cibulka, and P.\ Valtr,
Covering lattice points by subspaces and counting point-hyperplane incidences,
\emph{Discrete Comput. Geom.}, to appear.
%
\bibitem{BB12}
S.\ Barone, and S.\ Basu,
Refined bounds on the number of connected components of sign conditions on a variety,
\emph{Discrete Comput. Geom.} {\bf 47} (2012), 577--597.
%
\bibitem{BS16}
S.\ Basu and M.\ Sombra,
Polynomial partitioning on varieties of codimension two and point-hypersurface incidences in four dimensions,
\emph{Discrete Comput. Geom.} {\bf 55} (2016), 158--184.
%
\bibitem{BCR98}
J.\ Bochnak, M.\ Coste, and M.\ Roy,
\emph{Real Algebraic Geometry}, Springer-Verlag, Berlin, 1998.
%
\bibitem{Bb15}
E.\ Bombieri and J.\ Bourgain,
A problem on sums of two squares,
\emph{International Mathematics Research Notices} {\bf 2015.11} (2015), 3343--3407.
%
\bibitem{BD15}
J.\ Bourgain and C.\ Demeter,
New Bounds for the Discrete Fourier Restriction to the Sphere in 4D and 5D,
\emph{International Mathematics Research Notices} {\bf 2015.11} (2015), 3150--3184.
%
\bibitem{FPSSZ17}
J.\ Fox, J.\ Pach, A.\ Sheffer, A.\ Suk, and J.\ Zahl,
A semi-algebraic version of Zarankiewicz's problem,
\emph{J.\ of the European Mathematical Society}, {\bf 19} (2017), 1785--1810.
%
\bibitem{GK15}	
L.\ Guth and N.H.\ Katz,
On the Erd{\H o}s distinct distances problem in the plane,
{\em Annals Math.} {\bf 181} (2015), 155--190.
%
\bibitem{KZ17}
N.\ H.\ Katz and J.\ Zahl,
An improved bound on the Hausdorff dimension of Besicovitch sets in $\RR^3$,
\emph{J.\ Amer.\ Math.\ Soc.}, to appear.
%
\bibitem{mat02}
J. Matou\v sek,
\emph{Lectures on Discrete Geometry},
Springer Verlag, Heidelberg, 2002.
%
\bibitem{PS04}
J.\ Pach and M.\ Sharir,
Geometric incidences,
in \emph{Towards a Theory of Geometric Graphs} (J. Pach, ed.), Amer.\ Math.\ Soc., Providence, RI, 2004, 185--223.
%
\bibitem{SSS16}
M.\ Sharir, A.\ Sheffer, and N.\ Solomon,
Incidences with curves in $\RR^d$,
\emph{Electronic J.\ Combinat.} {\bf 23} (2016), P4.16.
%
\bibitem{ShSo17}
M.\ Sharir and N.\ Solomon,
Incidences between points and surfaces and points and curves, and distinct and repeated distances in three dimensions,
\emph{Proc.\ 28th ACM-SIAM Symp.\ on Discrete Algorithms} (2017), 2456--2475.
%
\bibitem{ShSo17b}
M.\ Sharir and N.\ Solomon,
Incidences Between Points and Lines in $\RR^4$,
\emph{Discrete and Computational Geometry} {\bf 57} (2017): 702--756.
%
\bibitem{SZ17}
M.\ Sharir and J.\ Zahl,
Cutting algebraic curves into pseudo-segments and applications,
\emph{J.\ Combin.\ Theory Ser.\ A} {\bf 150} (2017), 1--35.
%
\bibitem{Sheffer16}
A.\ Sheffer,
Lower bounds for incidences with hypersurfaces,
Discrete Analysis 2016:16.
%
\bibitem{ST12}
J.\ Solymosi and T.\ Tao,
An incidence theorem in higher dimensions,
\emph{Discrete Comput. Geom.} {\bf 48} (2012), 255--280.
%
\bibitem{Zahl17}
J.\ Zahl,
Breaking the 3/2 barrier for unit distances in three dimensions,
\emph{IMRN}, to appear.
%
\end{thebibliography}
\end{document}